\newtheorem{thm}{Theorem}[section]
\newtheorem{lem}[thm]{Lemma}
\newtheorem{cor}[thm]{Corollary}
\newtheorem{prop}[thm]{Proposition}
\theoremstyle{definition}
\newtheorem{dfn}[thm]{Definition}
\theoremstyle{remark}
\newcommand{\Z}{\mathbb{Z}}
\newcommand{\baire}{\mathcal{N}}
\newcommand{\bb}[1]{\mathbb{{#1}}}
\newcommand{\inv}{^{-1}}
\newcommand{\fr}{^\smallfrown} %concatenation sign
\newcommand{\ip}[1]{\langle {#1} \rangle} %inner product
\newcommand{\sm}{\smallsetminus}
\newcommand{\bp}[2]{\mathbf{\Pi}_{#2}^{#1}} %boldface hierarchy pi
\newcommand{\bd}[2]{\mathbf{\Delta}_{#2}^{#1}} % and delta
\newcommand{\Fr}{\mathrm{Fr}}
\newcommand{\dom}{\operatorname{dom}}
\newcommand{\aut}{\operatorname{Aut}}
\newcommand{\cay}{\operatorname{Cay}}
\newcommand{\res}{\upharpoonright}
\newcommand{\lra}{\Leftrightarrow}
\newcommand{\s}[1]{\mathcal{{#1}}} %caligraphy
\title{$\Delta^1_1$ Effectivization in Borel Combinatorics}
\author{Riley Thornton }
\begin{document}

\maketitle

\begin{abstract}
    We develop a flexible method for showing that Borel witnesses to some combinatorial property of $\Delta^1_1$ objects yield $\Delta^1_1$ witnesses. We use a modification the Gandy--Harrington forcing method of proving dichotomies, and we can recover the complexity consequences of many known dichotomies with short and simple proofs. Using our methods, we give a simplified proof that smooth $\Delta^1_1$ equivalence relations are $\Delta^1_1$-reducible to equality;  we prove effective versions of the Lusin--Novikov and Feldman--Moore theorems; we prove new effectivization results related to dichotomy theorems due to Hjorth and Miller (originally proven using ``forceless, ineffective, and powerless" methods); and we prove a new upper bound on the complexity of the set of Schreier graphs for $\Z^2$ actions. We also prove an equivariant version of the $G_0$ dichotomy that implies some of these new results and a dichotomy for graphs induced by Borel actions of $\Z^2$.
\end{abstract}

\section{Introduction}

Recent research has focused on the projective complexity of various Borel combinatorial properties of Borel graphs, equivalence relations, etc. For instance, the main result of \cite{TV} is that the set of (codes for) Borel 3-colorable Borel graphs is $\mathbf{\Sigma}^1_2$ complete. Such a complexity lower bound rules out dichotomy theorems like the $G_0$ dichotomy characterizing countably Borel colorable graphs \cite{KST}. It also implies the existence of $\Delta^1_1$ graphs which are Borel 3-colorable but not $\Delta^1_1$ 3-colorable.  In this note, we explain a modification of the Gandy--Harrington forcing arguments for dichotomy theorems which yields effectivization results. That is, we show how Borel witnesses to combinatorial properties of $\Delta^1_1$ objects imply the existence of $\Delta^1_1$ witnesses. This kind of effectivization implies a strong upper bound on projective complexity, and can be construed as a sort of weak dichotomy theorem. Our methods give short and simple proofs of these weak dichotomies even for properties where an actual dichotomy is unwieldy, and they allow us to make a fairly sweeping generalization of many known results. 

The basic tool that we use is the observation that, for $\Phi$ among a large class of properties including what we will call independence properties, if $B$ is a Borel set, $A$ is a Gandy--Harrington condition, $\Phi(B)$, and $A\Vdash \dot x\in B$, then $\Phi(A)$. So, for instance, if a space can be covered with countably many Borel independent sets, then in any nonempty $\Sigma^1_1$ set we can use the forcing relation to find an independent set, so by a reflection argument we have a cover by independent $\Delta^1_1$ sets. This is made precise in Lemma \ref{theft} and Theorem \ref{general}. %hmmmmmm

In the next section we give three illustrative examples, including a short proof that smooth $\Delta^1_1$ equivalence relations are $\Delta^1_1$-reducible to equality. In Section \ref{general section} we define a large class of properties $\Phi$ where this method works and prove some general results. In Section \ref{applications} we give several applications. We prove effective versions of the Lusin--Novikov and Feldman--Moore theorems ({Theorem \ref{effectivelnfm}}). We show that any $\Delta^1_1$ graph generated by a single Borel function, a countable family of Borel functions, or a Borel free action of $\Z^2$ must be generated by a $\Delta^1_1$ function, countable family of functions, or $\Z^2$-action (Theorems \ref{1function}, \ref{manyfunctions}, and \ref{actions} respectively). We show that if $G$ is locally countable, $\Delta^1_1$, and admits a Borel end selection, then it admits a $\Delta^1_1$ end selection (Theorem \ref{selection}). And, we prove effectivization for Borel local colorings in the sense of Miller's $(\bb G_0, \bb H_0)$ dichotomy (Theorem \ref{goho}). In the last section we prove a handful of new dichotomy theorems related to some of the effectivization results in section 4. In particular, we prove an equivariant version of the $G_0$ dichotomy which implies the results about generating graphs with functions (Theorem \ref{equivariant go}). And, we prove a dichotomy for Schreier graphs of Borel actions of $\Z^2$ which generalizes Miller's characterization of undirectable forests of lines (Theorem \ref{axndichotomy}).

% \subsection{Acknowledgements} Thank you to Andrew Marks for helpful comments on a draft of this paper. The author was supported by the NSF grant DMS-1764174

\subsection{Conventions and Notation}

Throughout, graphs are simple and undirected. Formally, a graph on a vertex set $X$ is a symmetric, irreflexive subset of $X^2$. An edge in a graph is some ordered pair in the graph (so our edges are directed, and our graphs contain both possible directions).  

For an edge $e$, we typically write $e_0$ for the tail and $e_1$ for the head of $e$. And, we write $-e$ for the edge with the opposite direction, i.e.~$-(e_0,e_1)=(e_1,e_0)$. For a set of edges $A$, let $-A=\{-e: e\in A\}.$

We write $\baire$ for Baire space, i.e.~$\omega^\omega$ with the product topology. The map $\pi_i:\baire^n\rightarrow \baire$ is projection onto the $i^{th}$ coordinate. A box is a subset of $\baire^n$ of the form $A_1\times...\times A_n$ for some $A_1,...,A_n\subseteq \baire$.

We will work with Gandy--Harrington forcing on $\baire^n$, i.e.~the poset of non-empty $\Sigma^1_1$ subsets of $\baire^n$ ordered by inclusion. We write for $\bb P_n$ this forcing, and we write $\dot x$ for the standard name for the real coded by a $\bb P_1$-generic filter, $(\dot x,\dot y)$ for the name for the reals coded by a $\bb P_2$-generic, and $\dot {\mathbf x}=(\dot x_1,...,\dot x_n)$ for the $\bb P_n$-generic reals.

For a Borel set $B$, when we write $\Vdash \dot x\in B$ or consider $B$ in some generic extension, we mean the set in coded in the extension by some ground model code for $B$. In particular, $\Phi(B)$ is absolute whenever $\Phi$ is $\bp11$ on $\bd11$.

When we say a $\Delta^1_1$ sequence of $\Delta^1_1$ sets $\ip{A_\alpha: \alpha\in \beta}$ (for a computable ordinal $\beta$), we mean that there is a $\Delta^1_1$ order $\prec$ on $\omega$ and $\Delta^1_1$ set of pairs $(c_\alpha, i_\alpha)$ so that $i_\alpha\in\omega$ has order type $\alpha$ in $\prec$ and  $c_\alpha$ is a code for $A_\alpha$. A family of Borel sets is $\Pi^1_1$ in the codes if the set of Borel codes for Borel sets in this family is $\Pi^1_1.$ Our references for effective descriptive set theory are Marks's lecture notes \cite{edst} and the paper by Harrington, Marker, and Shelah \cite{HMS}. 

When $s_1,s_2,s_3,...$ are ordinal length sequences, say $s_i\in A^{\alpha_i}$ for some $A$, then their concatenation, $s=s_1\fr s_2\fr s_3\fr ...$ is the squences of length $\sum_i \alpha_i$ defined by
\[s(\beta)=s_i(\gamma):\lra \beta=(\sum_{j<i}\alpha_j )+\gamma\]

\subsection{Acknowledgements} Thanks to Andrew Marks for helpful comments on earlier drafts of this paper. The author was supported by the NSF grant DMS-1764174.

\section{First examples}

We begin with three examples. First we reprove the effectivization consequences of the $G_0$ dichotomy:

\begin{thm}
If $g$ is a code for a Borel graph $G$ on $\baire$, and $G$ admits a Borel countable coloring, then $G$ admits a $\Delta^1_1(g)$ countable coloring.
\end{thm}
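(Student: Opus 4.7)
The plan is to instantiate the forcing-and-reflection template of the introduction with $\Phi(S):=$``$S$ is $G$-independent" as the property of interest. Work relative to $g$, and let $\bb P_n$ denote Gandy--Harrington forcing by nonempty $\Sigma^1_1(g)$ subsets of $\baire^n$. Let $\baire=\bigcup_n B_n$ be the hypothesized Borel countable coloring into Borel $G$-independent sets.

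First I would verify the instance of Lemma~\ref{theft} for $\Phi$: if $A\in\bb P_1$, $B$ is Borel with $\Phi(B)$, and $A\Vdash_{\bb P_1}\dot x\in B$, then $\Phi(A)$. This is nearly automatic (since $A\Vdash\dot x\in B$ unfolds to $A\subseteq B$), but the conceptual packaging that will matter in later applications goes through $\bb P_2$: the product condition $A\times A$ forces $(\dot x,\dot y)\in B\times B\subseteq\baire^2\sm G$, so by absoluteness of the Borel statement $(x,y)\notin G$ and the density of $\bb P_2$-generic pairs in $A\times A$, $(A\times A)\cap G=\emptyset$.

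Next I would argue that every nonempty $A\in\bb P_1$ contains a nonempty $\Sigma^1_1(g)$ $G$-independent subset. Since $A=\bigcup_n(A\cap B_n)$ is nonempty, a strengthening of $A$ forces $\dot x\in B_n$ for some $n$; by the theft observation that strengthening is $G$-independent, and by first reflection applied to the downward hereditary $\bp11(g)$ predicate $\Phi$, it is contained in a $\Delta^1_1(g)$ $G$-independent Borel set. Thus the $\bp11(g)$ family $\mathcal F$ of $\Delta^1_1(g)$ codes for $G$-independent Borel sets has the property that every nonempty $\Sigma^1_1(g)$ set meets $\bigcup\mathcal F$. A dichotomy-style argument (any $x$ avoided by $\mathcal F$ would persist to a $\Sigma^1_1(g,x)$ set with no $\Delta^1_1$ independent cover, producing a Borel obstruction contradicting the hypothesized Borel coloring), together with a $\Sigma^1_1$-boundedness extraction indexed by the ordinal heights of $\Delta^1_1(g)$ codes, upgrades this to ``countably many elements of $\mathcal F$ cover $\baire$", yielding the desired $\Delta^1_1(g)$ countable coloring.

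The two main obstacles I anticipate are: (i) the density step produces a strengthening that is a priori only $\Sigma^1_1(g,c)$, where $c$ codes the given Borel coloring, so the reflection must be arranged so that the witness remains $\Delta^1_1(g)$ rather than $\Delta^1_1(g,c)$---the fix is probably to formulate density in a parameter-free way via ``$A$ is contained in \emph{some} Borel $G$-independent set", letting the existential absorb the choice of code; and (ii) passing from ``every nonempty $\Sigma^1_1(g)$ set meets $\bigcup\mathcal F$" to ``$\bigcup\mathcal F=\baire$" is not automatic, and will likely require a second appeal to the theft lemma to rule out any point avoided by $\mathcal F$ by converting such a point into a Borel failure of the coloring hypothesis.
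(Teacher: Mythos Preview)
Your core idea---the $\bb P_2$ theft argument showing that any $\Sigma^1_1(g)$ condition forcing $\dot x$ into a $G$-independent Borel set is itself $G$-independent---is correct and is exactly the paper's approach. Two points need correction, though.

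First, the parenthetical claim that ``$A\Vdash\dot x\in B$ unfolds to $A\subseteq B$'' is false when $B$ is merely Borel rather than $\Delta^1_1(g)$. Conditions in $\bb P_1$ are $\Sigma^1_1(g)$ sets, and a condition can force the generic into a Borel set $B$ without being literally contained in $B$ in the ground model (the generic real is not a ground-model point, and $A\sm B$ need not be $\Sigma^1_1(g)$). Your $\bb P_2$ argument is the right one and is what the paper does: if $A$ were not independent, then $(A\times A)\cap G$ would be a nonempty $\Sigma^1_1(g)$ set, hence a $\bb P_2$ condition forcing $(\dot x,\dot y)\in G\cap(B\times B)$, contradicting absoluteness of ``$B$ is independent.'' So drop the ``nearly automatic'' remark.

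Second, you are making obstacle (ii) much harder than it is, and this is where your sketch becomes genuinely incomplete. The set $X:=\baire\sm\bigcup\mathcal F$ is \emph{itself} $\Sigma^1_1(g)$, since $\bigcup\mathcal F$ is $\Pi^1_1(g)$ (being in some $\Delta^1_1(g)$-coded $G$-independent set is a $\Pi^1_1(g)$ condition on $x$). So if $X\neq\emptyset$, your own density argument applied with $A=X$ yields a $\Delta^1_1(g)$ independent set meeting $X$, an immediate contradiction. No ``dichotomy-style argument,'' no $\Sigma^1_1$-boundedness, no relativization to an avoided point is needed; the paper simply forces below $X$ from the outset. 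Obstacle (i) likewise dissolves: the strengthening $A'\leq A$ that decides $\dot x\in B_n$ is by definition a member of $\bb P_1$, hence $\Sigma^1_1(g)$; the code $c$ for the Borel coloring enters only through the forcing \emph{statement}, not through the condition. Once $X=\emptyset$, the paper extracts a $\Delta^1_1(g)$ coloring by $\Pi^1_1$ uniformization of the total $\Pi^1_1(g)$ relation $\{(x,e):e\in\mathcal F,\ x\in A_e\}$, which is simpler than a boundedness extraction.
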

\begin{proof}
By relativization, we may assume $g\in \Delta^1_1.$ We first show that $G$ admits a $\Delta^1_1$ coloring if and only if $\baire$ is a union of $\Delta^1_1$ $G$-independent sets. If $f:\baire\rightarrow \omega$ is a coloring, then $\baire=\bigcup_i f\inv(i)$ and each of the fibers is $G$-independent and $\Delta^1_1$. For the converse, suppose every $x$ is in some $\Delta^1_1$ independent set. Then \[\{(x,i):i\mbox{ codes }A, \; x\in A,\; A\mbox{ is }G\mbox{ independent}\}\] is a $\Pi^1_1$ total relation, so admits a $\Delta^1_1$ uniformization by \cite[Theorem 2.15]{edst}. The uniformizing function is a countable coloring.

Let $X=\baire\sm \bigcup\{A\in \Delta^1_1: A\mbox{ is }G\mbox{ independent}\}$. Note that $X$ is $\Sigma^1_1$. Suppose $X\not=\emptyset.$ (If we were to try to prove a dichotomy result here we would start trying to build some generic obstruction to colorability.) Suppose toward contradiction $f$ is a Borel countable coloring of $G$.

Recall that $\bb{P}_i$ is Gandy--Harrington forcing on $\baire^i$. By absoluteness, the interpretation of the code for $f$ remains a Borel coloring in any extension by a $\bb{P}_i$-generic. We can find some $A\in\bb P_1$ below $X$ so that $A\Vdash f(\dot x)=i$, where $\dot x$ is a name for the generic real. If $A$ is not independent, then ${B=\{(x,y): (x,y)\in G, x, y\in A\}}$ is a condition in $\bb P_2$ and \[B\Vdash f(\dot x)=f(\dot y)\; \wedge \; (\dot x,\dot y)\in G,\] which is a contradiction. Since being independent is $\Pi^1_1$ on $\Sigma^1_1$, $A$ is contained in some $\Delta^1_1$ independent set. But this means $A\cap X=\emptyset$, contradicting our choice of $A.$ %include proof that GH conditions force what they say ?
\end{proof}

\begin{cor}
The set of Borel countably colorable graphs is $\Pi^1_1$ in the codes.
\end{cor}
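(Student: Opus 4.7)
The plan is to combine the theorem with routine effective-descriptive-set-theoretic bookkeeping. By the equivalence established at the start of the theorem's proof, together with the theorem's conclusion, $g$ codes a Borel countably colorable graph iff $g$ codes a Borel graph $G$ and every $x \in \baire$ lies in some $\Delta^1_1(g)$ $G$-independent set. So it suffices to show that this right-hand side is $\Pi^1_1$ in $g$.

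Using the standard $\Pi^1_1$ parameterization of $\Delta^1_1(g)$ subsets of $\baire$ (see \cite{edst}), I consider the ternary relation
\[
R(e, x, g) \equiv \text{``$e$ is a $\Delta^1_1(g)$ index for a $G$-independent set containing $x$''.}
\]
Each clause is $\Pi^1_1$: being a $\Delta^1_1(g)$ index is $\Pi^1_1$, membership in the coded set is $\Pi^1_1$ on the set of indices, and $G$-independence is $\Pi^1_1$ on $\Sigma^1_1$. Since $\Pi^1_1$ is closed under countable unions, the set $U := \{(x, g) : \exists e \in \omega \, R(e, x, g)\}$ is $\Pi^1_1$.

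The condition above then reads: ``$g$ codes a Borel graph, and $\forall x\, (x, g) \in U$''. The first conjunct is well known to be $\Pi^1_1$ in the code. For the second, its negation is $\exists x\, (x,g) \in U^c$, which is the projection of the $\Sigma^1_1$ set $U^c$ onto the $g$-coordinate and hence $\Sigma^1_1$; so the second conjunct is $\Pi^1_1$. Intersecting yields the desired $\Pi^1_1$ description of the codes for Borel countably colorable graphs.

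The only step requiring care is the $\Pi^1_1$ complexity of the parameterization of $\Delta^1_1(g)$ sets and the uniform membership relation, but these are standard facts of effective descriptive set theory, already invoked implicitly in the theorem's use of $\Pi^1_1$ uniformization. I do not expect any serious obstacle beyond this bookkeeping.
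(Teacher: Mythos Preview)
Your proposal is correct and follows essentially the same approach as the paper: use the effectivization theorem to replace Borel witnesses by $\Delta^1_1(g)$ witnesses, then observe that quantifying existentially over $\Delta^1_1(g)$ codes (natural numbers) preserves $\Pi^1_1$. The only cosmetic difference is that the paper quantifies over a single $\Delta^1_1(g)$ code for a coloring function, whereas you quantify (for each $x$) over $\Delta^1_1(g)$ codes for independent sets containing $x$; both routes are justified by the first paragraph of the theorem's proof and yield the same complexity bound.
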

\begin{proof} Write $B_c$ for the set coded by a Borel code $c$, and write $\phi_n^x$ for the partial function computed by the $n^{th}$ oracle machine with oracle $x$.

For any $g\in \baire$, $g$ codes a graph with a countable coloring if and only if $g$ codes a graph $G$, and 
\[(\exists f\in \Delta^1_1(g))\; f\mbox{ is a countable coloring of }G.\] In detail, this second clause says
\[(\exists n\in \omega) \; \phi^g_n \mbox{ is a Borel code, and }B_{\phi_n^g}\mbox{ is a countable coloring of }G.\] 
And $f$ is a countable coloring of $G$ if and only if \[f\subseteq \baire\times \omega\wedge (\forall x,y,n,m) (y,n),(x,n), (x,m)\in f\rightarrow \left(x,y\not\in G\wedge n=m\right).\] This all $\Pi^1_1$.
\end{proof}

We will only state results for $\Delta^1_1$ sets below, but every statement relativizes and gives a complexity bound as above.

The general outline of the method is as follows: We first reduce whatever problem is at hand to the problem of covering some set $X$ by countably many Borel sets satisfying some property $\Phi$ (this is usually the combinatorial heart of the problem). Then we suppose that some $X\in\Delta^1_1$ can be covered by Borel sets $\ip{A_i:i\in\omega}$ satisfying $\Phi$, but not by $\Delta^1_1$ sets. We get a contradiction by forcing below \[\widetilde X := X\sm\bigcup\{A\in\Delta^1_1: \Phi(A)\}\] to find some $p$ and $i$ with \[p\Vdash \dot x\in A_i.\] We then show by contradiction that in fact $\Phi(p)$, so by a reflection argument $p\cap X=\emptyset.$ In the next section, we formalize this proof sketch and offer some technical variations on the idea.

Our second example involves a case where we want our covering sets to satisfy a notion of independence and a kind of closure. We can give a much simplified proof of the effectivization consequences of Harrington--Kechris--Louveau \cite[Theorem 5.2.7]{miller survey} by considering boxes in $\baire^2$ which avoid a given equivalence relation.

\begin{thm} \label{smooth}
If $E$ is a $\Delta^1_1$ equivalence relation and Borel reducible to $(=_{2^\omega})$, then $E$ is $\Delta^1_1$ reducible to $(=_{2^\omega})$.
\end{thm}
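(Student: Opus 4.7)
The plan is to apply the paper's general method with the property ``$A \times A^c$ avoids $E$'' for Borel $A \subseteq \baire$, equivalently that $A$ is $E$-invariant. First I would verify that $E$ is Borel (respectively $\Delta^1_1$) reducible to $=_{2^\omega}$ iff $\neg E$ admits a countable Borel (respectively $\Delta^1_1$-indexed) cover by rectangles $A_n \times A_n^c$ with each $A_n$ invariant. The forward direction takes $A_n = f^{-1}\{s : s(n)=1\}$ for a reduction $f$; the backward direction reads off a reduction as $x \mapsto (\chi_{A_n}(x))_n$ using that each $A_n$ is invariant.

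Now set $\widetilde X := \neg E \setminus \bigcup\{A \times A^c : A \in \Delta^1_1 \text{ and } E\text{-invariant}\}$, a $\Sigma^1_1$ subset of $\baire^2$, and assume toward contradiction that $\widetilde X \neq \emptyset$. By the assumed Borel reducibility, fix a Borel invariant cover $\neg E = \bigcup_n C_n \times C_n^c$. Force in $\bb{P}_2$ below $\widetilde X$: by absoluteness and the countable cover, find a condition $P \subseteq \widetilde X$ and an index $n$ with $P \Vdash (\dot x, \dot y) \in C_n \times C_n^c$. Hence $P \subseteq C_n \times C_n^c$, and by invariance of $C_n$ the $E$-saturations $[\pi_1(P)]_E \subseteq C_n$ and $[\pi_2(P)]_E \subseteq C_n^c$ are disjoint $E$-invariant $\Sigma^1_1$ sets.

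To finish, I would apply $\Delta^1_1$ invariant separation for the $\Delta^1_1$ equivalence relation $E$: disjoint $E$-invariant $\Sigma^1_1$ sets are separated by an $E$-invariant $\Delta^1_1$ set $A^*$. Then $P \subseteq A^* \times (A^*)^c$ is a ``good'' rectangle, contradicting $P \subseteq \widetilde X$. Hence $\widetilde X = \emptyset$, which by the first paragraph yields the desired $\Delta^1_1$ reduction. The chief obstacle is the invariant $\Delta^1_1$-separation step---the effective analogue of Burgess's invariant separation theorem---which I would prove either by a Gandy--Harrington argument in its own right (iteratively applying $\Delta^1_1$-separation and taking $E$-saturations, using that $E \in \Delta^1_1$ to control complexity), or, to fit the paper's framework, as another instance of Lemma~\ref{theft}/Theorem~\ref{general} applied to an appropriate invariance property.
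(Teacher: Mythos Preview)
Your approach is a legitimate variant of the paper's argument, but there is one genuine gap. The inference ``$P \Vdash (\dot x,\dot y)\in C_n\times C_n^c$, hence $P\subseteq C_n\times C_n^c$'' is not valid: for Gandy--Harrington forcing one has $p\Vdash \dot x\in B \Leftrightarrow p\subseteq B$ only when $B$ is $\Delta^1_1$, and your $C_n$ is merely Borel. Ground-model points of $P$ are not generic, so nothing forces them into $C_n\times C_n^c$. The conclusion you actually need---that $[\pi_1(P)]_E$ and $[\pi_2(P)]_E$ are disjoint---does follow, but by a forcing argument rather than a set inclusion: from $P\Vdash \dot x\in C_n\wedge \dot y\in C_n^c$ one gets $\pi_1(P)\Vdash_{\bb P_1}\dot x\in C_n$ and $\pi_2(P)\Vdash_{\bb P_1}\dot x\in C_n^c$; using $E$-invariance of $C_n$ (absolutely), the $\bb P_2$ condition $\{(z,x):z\in[\pi_1(P)]_E,\,x\in\pi_1(P),\,zEx\}$ forces $\dot z\in C_n$, so $[\pi_1(P)]_E\Vdash \dot x\in C_n$ and similarly for the other saturation; their intersection, if nonempty, would then force $\dot x\in C_n\cap C_n^c$. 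With this repair your proof goes through, given effective invariant separation (which, as you note, is standard via the second reflection theorem).

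The route differs from the paper's in an interesting way. The paper works with arbitrary $\Delta^1_1$ boxes $A\times B$ that merely avoid $E$; being a box is a closure property and avoiding $E$ is an independence property, so the argument slots directly into the Lemma~\ref{theft}/Theorem~\ref{general} framework, with the second reflection theorem doing the work at both ends (first to pass from a cover by arbitrary boxes to a reduction, then to reflect the $\Sigma^1_1$ box $\pi_1(p)\times\pi_2(p)$). You instead bake $E$-invariance into the covering sets from the outset, which makes the passage from cover to reduction immediate but shifts the reflection burden to an invariant-separation step at the forcing stage. Both are correct; the paper's version is more modular with respect to its general machinery, while yours is slightly more direct once invariant separation is in hand.
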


\begin{proof}
First note that $E$ is $\Delta^1_1$ (Borel) reducible to $(=_{2^\omega})$ if and only if $E^c$ is covered by a countable $\Delta^1_1$ (Borel) family of boxes which all avoid $E$. Indeed, if $f$ is a reduction, then we can set \[A_i=\{x: f(x)(i)=0\}\mbox{ and }B_i=\{x: f(x)(i)=1\}\] and consider the boxes $A_i\times B_i$. Conversely, if $E^c$ can be covered by $\Delta^1_1$ boxes, then 
\[(\forall (x,y)\not\in E) (\exists n\in\omega)\; n\in \s C, (x,y)\mbox{ is in the set coded by }n\] where $\s C=\{n\in \omega: n\mbox{ is a code for a $\Delta^1_1$ box avoiding }E\}$. So, by $\Pi^1_1$ on $\Pi^1_1$ reflection there is a $\Delta^1_1$ subset of $\s C$, say $\s D$ with the same property. We can enumerate $\s D$ to get a $\Delta^1_1$ covering sequence $\ip{A_i\times B_i:i\in\omega}$. By the second reflection theorem \cite[Lemma 1.4] {HMS}, we can assume each $A_i$ is $E$-invariant and avoids $B_i$, and we can define a reduction by $f(x)(i)=0$ if and only if $x\in A_i$.

Suppose $E\in \Delta^1_1$, $X=E^c\sm \bigcup \{A\times B: A,B\in \Delta^1_1, \left(A\times B\right)\cap E=\emptyset\}$ is nonempty, and $E^c=\bigcup_i A_i\times B_i$ with $A_i,B_i$ Borel. A set $Y\subseteq \baire^2$ is a box if and only if
\[(\forall (x,y),(a,b)\in Y)\;(x,b)\in Y.\] So again by the second reflection theorem applied to 
\[\Phi(Z,Y):=\left(\forall (x,y),(a,b)\not\in Z\right)\; (x,b)\in Y\mbox{ and }(x,y)\not\in E ,\] $X$ does not meet any $\Sigma^1_1$ boxes avoiding $E$. Also note that $X$ is $\Sigma^1_1.$

Since $X\Vdash (\dot x,\dot y)\not\in E$, there is some $p\subseteq X$ so that \[p\Vdash \dot x\in A_i\;\wedge \; \dot y\in B_i.\] And, since any $\bb P_2$ generic $(x,y)$ has $x$ and $y$ separately $\bb P_1$-generic, it must be that $A=\pi_1(p)\Vdash \dot x\in A_i$ and $B=\pi_2(p)\Vdash\dot x\in B_i$. So, $A\times B$ must avoid $E$ or else $\left(A\times B\right)\cap E\Vdash (x,y)\in E\cap \left(A_i\times B_i\right)$.

But then $p\subseteq A\times B\cap X=\emptyset$, which is a contradiction.
\end{proof}
\begin{cor}
The set of smooth relations is $\Delta^1_1$ in the codes.
\end{cor}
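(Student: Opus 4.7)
The plan is to show that the set of codes for smooth equivalence relations is both $\Pi^1_1$ and $\Sigma^1_1$ in $g$, hence $\Delta^1_1$.

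For the $\Pi^1_1$ direction, I mimic the proof of the previous corollary. By Theorem \ref{smooth}, a $\Delta^1_1(g)$ equivalence relation is smooth iff it admits a $\Delta^1_1(g)$ reduction to $(=_{2^\omega})$. So ``$g$ codes a smooth equivalence relation'' unfolds to the conjunction of ``$g$ codes an equivalence relation'' and ``$\exists n \in \omega$ such that $\phi^g_n$ is a Borel code for a reduction of the coded relation to $(=_{2^\omega})$''. The first clause is $\Pi^1_1$. The second is $\Pi^1_1$ because being a Borel code, being a function, and being a reduction (a biconditional involving the $\Delta^1_1$ relation) are all $\Pi^1_1$ conditions, and $\Pi^1_1$ is closed under existential number quantification.

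For the $\Sigma^1_1$ direction, I appeal to the Harrington--Kechris--Louveau dichotomy, according to which $E$ is smooth iff $E_0$ does not continuously embed into $E$. An effective version of the embedding side of HKL---provable by a Gandy--Harrington forcing argument analogous to Theorem \ref{smooth} but applied to the dual problem of finding a continuous $E_0$-embedding---gives that $E_0$ continuously embeds into $E$ iff it does so via a $\Delta^1_1(g)$ map. By the closure argument above, this is $\Pi^1_1$ in $g$, so its negation, equivalent to smoothness, is $\Sigma^1_1$.

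Combining the two characterizations yields $\Delta^1_1$. The main obstacle is rigorously justifying the effective embedding direction of HKL; it isn't explicitly stated in the excerpt but should follow either from the general framework developed later in the paper or from the effective descriptive set theory of \cite{HMS}.
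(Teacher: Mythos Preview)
Your $\Pi^1_1$ argument is exactly what the paper does: Theorem~\ref{smooth} gives a $\Delta^1_1(g)$ reduction whenever a Borel one exists, and the existence of such a reduction is a number-quantified $\Pi^1_1$ statement. The paper's proof stops there; it writes only the $\Pi^1_1$ computation and does not address the $\Sigma^1_1$ bound at all. So either the stated $\Delta^1_1$ is a slip for $\Pi^1_1$ (note the parallel corollary for countable colorability says only $\Pi^1_1$), or the paper's proof is incomplete as written. In any case, you have supplied strictly more than the paper.

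Your $\Sigma^1_1$ direction is correct in substance: the classical Gandy--Harrington proof of HKL does produce, for $\Delta^1_1(g)$ non-smooth $E$, a $\Delta^1_1(g)$ continuous embedding of $E_0$, so ``not smooth'' is $\Pi^1_1$ in the codes and hence ``smooth'' is $\Sigma^1_1$. But your parenthetical hope that this follows ``from the general framework developed later in the paper'' is misplaced. The paper's machinery (Lemma~\ref{theft}, Theorem~\ref{general}) reflects Borel \emph{witnesses}---covers, colorings, reductions---down to $\Delta^1_1$ ones; it does not build $\Delta^1_1$ \emph{obstructions} like the $E_0$-embedding. That construction requires the full tree-building half of the HKL argument, which the paper explicitly sets out to bypass. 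So your $\Sigma^1_1$ bound is sound, but it imports exactly the dichotomy machinery the paper was designed to avoid; it is not a consequence of anything proved here.
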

\begin{proof}
A real $c$ codes a smooth relation $E$ if and only if 
\[(\exists f\in \Delta^1_1)(\forall x,y) f(x)=f(y)\leftrightarrow x E y.\] As above, $(\exists f\in \Delta^1_1)$ is a equivalent to a universal quantifier over $\baire.$
\end{proof}

Our last example involves an ordinal length construction. It turns out we can also effectivize the ordinals length of the sequence to lie below $\omega_1^{CK}$. This gives another proof of the effectivization consequences of a dichotomy due independently to Kanovei and Louveau \cite[Theorem 5.2.3]{miller survey}.

\begin{dfn} 
For an ordinal $\alpha$, $\leq_{lex}^\alpha$ is the lexicographic order on $2^\alpha$, i.e.
\[x\leq_{lex}^\alpha y:\lra x=y \mbox{ or }(\exists \beta<\alpha)\left[ (x\res \beta)=(y\res \beta)\mbox{ and }x(\beta)<y(\beta)\right]\]
\end{dfn}

\begin{thm} \label{linearization}
Suppose $R$ is a quasi-order on $\baire$ with a Borel homomorphism to $\leq_{lex}^{\alpha}$ for some $\alpha<\omega_1$ which induces an injection on $\baire/\equiv_R$. Then there is a $\Delta^1_1$ homomorphism of $R$ to $\leq_{lex}^{\alpha'}$ for some $\alpha'<\omega_1^{CK}$ which induces an injection on $\baire/\equiv_R$.
\end{thm}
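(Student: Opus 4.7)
The plan is to imitate the proof of Theorem \ref{smooth} within a transfinite recursion of length some $\alpha'<\omega_1^{CK}$, producing a $\Delta^1_1$ sequence of subsets of $\baire$ that codes the desired homomorphism.

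First I would recast the data combinatorially. A Borel homomorphism $f:\baire\to 2^\alpha$ of $R$ into $\leq_{lex}^\alpha$ inducing an injection on $\baire/\equiv_R$ is equivalent to a Borel sequence $\langle B_\beta:\beta<\alpha\rangle$ of subsets of $\baire$ such that, writing $\sim_\beta$ for the equivalence $x\sim_\beta y\lra(\forall\beta'<\beta)(x\in B_{\beta'}\lra y\in B_{\beta'})$, each $B_\beta$ is $R$-downward closed within every $\sim_\beta$-class and $\sim_\alpha=\equiv_R$. The correspondence is $f(x)(\beta)=0\lra x\in B_\beta$: the local downward-closure at stage $\beta$ is exactly the homomorphism condition at coordinate $\beta$, and the coincidence at stage $\alpha$ is exactly the injection on quotient. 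An easy induction using reflexivity of $R$ shows $\equiv_R\subseteq\sim_\beta$ for every $\beta$.

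Next I would build, by transfinite recursion, a $\Delta^1_1$ sequence $\langle A_\beta:\beta<\alpha'\rangle$ with the analogous properties. At stage $\beta$, with $\Delta^1_1$ equivalence $\sim_\beta\supseteq\equiv_R$ generated by the prior $A_{\beta'}$: if $\sim_\beta=\equiv_R$, halt at $\alpha'=\beta$; otherwise produce $A_\beta$ via the following key lemma, whose proof mirrors Theorem \ref{smooth}.

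\emph{Key lemma.} If $\sim_\beta$ is $\Delta^1_1$ and strictly coarser than $\equiv_R$, there is a $\Delta^1_1$ set $A$ which is $\sim_\beta$-locally $R$-downward closed and strictly refines $\sim_\beta$. Let $\Phi_\beta(A)$ denote the $\Pi^1_1$-on-$\Sigma^1_1$ property ``$A$ is $\sim_\beta$-locally $R$-downward closed''. Suppose toward contradiction that the $\Sigma^1_1$ set
\[\widetilde Y_\beta:=(\sim_\beta\setminus\equiv_R)\setminus\bigcup\{A\times A^c:A\in\Delta^1_1,\,\Phi_\beta(A)\}\]
is non-empty, and force with $\bb P_2$ below it. Using the Borel hypothesis that $f(\dot x)\neq f(\dot y)$, one finds a condition $p\leq\widetilde Y_\beta$ and a $\gamma<\alpha$ with $p\Vdash\dot x\sim^f_\gamma\dot y\wedge\dot x\in B^f_\gamma\wedge\dot y\notin B^f_\gamma$, where $B^f_\gamma:=f^{-1}(\{z:z(\gamma)=0\})$ and $\sim^f_\gamma$ is the equivalence generated by the first $\gamma$ Borel sets of $f$. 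Consider the $\Sigma^1_1$ set
\[A^*:=\{x:(\exists y\in\pi_1(p))\;x\sim_\beta y\wedge xRy\},\]
which satisfies $\Phi_\beta(A^*)$ by transitivity of $R$ and contains $\pi_1(p)$ by reflexivity. Using the $\sim^f_\gamma$-local-lds-ness of $B^f_\gamma$, one argues $A^*\cap\pi_2(p)=\emptyset$: a witness pair $(x,y)$ would give $x\sim^f_\gamma y$, $y\in B^f_\gamma$, $xRy$, forcing $x\in B^f_\gamma$ and contradicting $\pi_2(p)\subseteq(B^f_\gamma)^c$. Second reflection ($\Pi^1_1$-on-$\Pi^1_1$, as in Theorem \ref{smooth}) then yields a $\Delta^1_1$ $A$ with $\pi_1(p)\subseteq A\subseteq A^*$ and $\Phi_\beta(A)$, so $p\subseteq A\times A^c$, contradicting the definition of $\widetilde Y_\beta$.

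Termination at some $\alpha'<\omega_1^{CK}$ follows from $\Sigma^1_1$-boundedness applied to the strictly refining $\Delta^1_1$-uniform sequence $\langle\sim_\beta\rangle$, by the standard argument bounding ranks of $\Delta^1_1$ well-founded relations below $\omega_1^{CK}$. The hard step is establishing $x\sim^f_\gamma y$ for witness pairs: the $\sim_\beta$ of the $\Delta^1_1$ recursion need not a priori align with the $\sim^f_\gamma$ of the Borel witness $f$. A natural remedy is to shrink $p$ so that $\pi_1(p)\cup\pi_2(p)$ sits in a single ground-model $\sim^f_\gamma$-class, which is immediate for $\gamma<\omega$ (pigeonhole over the finitely many ground-model classes) and requires either further bookkeeping or use of the paper's general framework for $\gamma\geq\omega$, where not every $\sim^f_\gamma$-class need have a ground-model Borel code.
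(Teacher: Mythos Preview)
Your stage-by-stage recursion is a genuinely different architecture from the paper's, and the gap you flag at the end is real. In your argument for $A^*\cap\pi_2(p)=\emptyset$ you pass from the forcing statement $p\Vdash\dot x\in B^f_\gamma\wedge\dot y\notin B^f_\gamma\wedge\dot x\sim^f_\gamma\dot y$ to ground-model assertions about specific reals $y\in\pi_1(p)$ and $x\in\pi_2(p)$; neither ``$y\in B^f_\gamma$'' nor ``$x\sim^f_\gamma y$'' follows. Your pigeonhole remedy handles only finitely many classes, and ``further bookkeeping'' is not a proof. The gap \emph{can} be closed, but not as you suggest: one should take $\gamma$ minimal over the larger set $\sim_\beta\setminus\equiv_R$ rather than over $\widetilde Y_\beta$, and then argue by forcing that the $\Sigma^1_1$ set $q=\{(w,z)\in\pi_1(p)\times\pi_2(p):z\sim_\beta w,\ zRw\}$ is empty, using that $q\subseteq\sim_\beta\setminus\equiv_R$ (the $\equiv_R$ part because $q$ forces $f(\dot w)(\gamma)\neq f(\dot z)(\gamma)$), so that $q$ inherits the agreement-before-$\gamma$ forced by the larger set. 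There is a second difficulty you do not address: to invoke $\Sigma^1_1$-boundedness you need the recursion $\beta\mapsto A_\beta$ to be $\Delta^1_1$-uniform, but the predicate ``$c$ codes a $\Delta^1_1$ set with $\Phi_\beta$ that strictly refines $\sim_\beta$'' is a conjunction of a $\Pi^1_1$ and a $\Sigma^1_1$ condition, so no direct $\Pi^1_1$-uniformization selects $A_\beta$.

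The paper avoids both problems by abandoning recursion entirely. It works with the whole $\Pi^1_1$ family $\mathcal F$ of $\Delta^1_1$ homomorphisms into lex orders of recursive length and the single $\Sigma^1_1$ set $X=(\equiv_{\mathcal F})\setminus(\equiv_R)$. The minimal coordinate $\alpha_0$ is chosen once, relative to $X$, so $X$ itself forces $g(\dot x)\res\alpha_0=g(\dot y)\res\alpha_0$; the auxiliary condition $q=(\pi_1(p)\times\pi_2(p))\cap R\cap(\equiv_{\mathcal F})$ then lies below $X$ (it forces $g(\dot x)\neq g(\dot y)$, hence $\neg(\dot x\equiv_R\dot y)$), and the alignment you struggle with is automatic. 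Once $q=\emptyset$, a single reflection extracts a countable $\Delta^1_1$ subfamily of $\mathcal F$ whose concatenation, plus one extra coordinate $\chi_A$, produces a new member of $\mathcal F$ separating $p$---contradiction. No transfinite construction, no uniform selector, no termination argument.
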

\begin{proof}

Given a family of functions $\s F$ on $\baire$, define an equivalence relation by \[x\equiv_{\s F}y \;:\lra\;(\forall f\in \s F)\; f(x)=f(y).\]  And for a function $f$, let $(\equiv_f)=(\equiv_{\{f\}})$. We want to find some $\Delta^1_1$ homomorphism from $R$ to a lexicographic ordering so that $(\equiv_R)=(\equiv_f)$.

Consider \[\s F=\{f: (\exists \alpha<\omega_1^{CK})\;f\mbox{ is a $\Delta^1_1$ homomorphism from $R$ to }\leq_{lex}^\alpha \}.\] Note that $\s F$ is $\Pi^1_1$ and $(\equiv_R)\subseteq (\equiv_{\s F})$. If $(\equiv_R)=(\equiv_\s F)$, then since this is a $\Pi^1_1$ on $\Pi^1_1$ statement about subsets of $\s F$, by reflection there is a $\Delta^1_1$ $\s G\subseteq \s F$ so that $(\equiv_{\s G})=(\equiv_R)$. We can enumerate $G$ as $\ip{f_i: i\in\omega}$. So each $f_i$ is a homomorphism from $R$ to some $\leq_{lex}^{\alpha_i}$ with $\alpha_i<\omega_1^{CK}$. Then $(\equiv_R)=(\equiv_f)$ where 
\[f(x)=f_0(x)\fr f_1(x)\fr f_2(x)\fr ... .\]

Now suppose towards contradiction that $X=(\equiv_{\s F})\sm (\equiv_R)$ is nonempty, and fix a Borel homomorphism $g$ from $R$ to some $\leq_{lex}^{\tilde \alpha}$. We have that 
\[X\Vdash g(\dot x)(\alpha)\not=g(\dot y)(\alpha) \mbox{ for some }\alpha.\] So we can define \[\alpha_0=\min\{\alpha: (\exists p\subseteq X)\; p\Vdash g(\dot x)(\alpha)\not=g(\dot y)(\alpha)\}. \] Note that $X\Vdash g(\dot x)\res \alpha_0=g(\dot y)\res \alpha_0$. 

Choose $p\subseteq X$ witnessing the above formula. Without loss of generality, we may assume $p\Vdash g(\dot x)(\alpha_0)=1\; \wedge\; g(\dot y)=0$. Let $A=\pi_1(p)$ and $B=\pi_2(p)$. Then, if $q=(A\times B)\cap R\cap (\equiv_{\s F})\not=\emptyset$,
\[q\Vdash g(\dot x)(\alpha_0)>g(\dot y)(\alpha_0)\;\wedge\; g(\dot x)\res \alpha_0=g(\dot y)\res \alpha_0\;\wedge \; x R y\] which contradicts the fact that $g$ is a homomorphism. 

So, $(A\times B)\cap R\cap (\equiv_{\s F})=\emptyset$. By the second reflection theorem we may assume $A$ is $\Delta^1_1$ and closed upwards under by $R\cap(\equiv_{\s F})$ and avoids $B_i$. By reflection there is a $\Delta^1_1$ sequence of functions $\ip{f_i: i\in\omega}$ so that each $f_i$ is in $\s F$ and $(A\times B)\cap R\cap (\equiv_{\{f_i:i\in\omega\}})=\emptyset.$ 

Suppose $f_i$ is a homomorphism into the lexicographic order on $\alpha_i$. We can define a homomorphism into the lexicographic order on $\gamma:=(\sum_i \alpha_i)+1$ as follows. Set $f_{\infty}(x)=1$ if and only if $x\in A$ and
\[f(x):=\left(f_0(x)\fr f_1(x)\fr ... \right)\fr f_{\infty}(x).\] To check this is a homomorphism, suppose $x R y$. If $f_i(x)<_{lex}^{\alpha_i}f_i(y)$ for some $i$, then $f(x)\leq_{lex}^\gamma f(y)$. Otherwise $x\equiv_{\s F}y$, and so if $x\in A$ then $y\in A$ by our closure assumption. This means $f_{\infty}(x)\leq f_{\infty}(y)$. In any case $f(x)\leq_{lex}^{\gamma}f(y)$ and $f$ is a homomorphism.

But then $f\in \s F$ and $f(x)\not=f(y)$ for $x,y\in p$, which contradicts our choice of $p$.
\end{proof}

\section{General Results}\label{general section}

Our first goal in this section is to prove a general result on effectivizing countable covers by Borel sets satisfying some property, $\Phi$. We can do this when $\Phi$ is conjunction of what we call independence properties and closure properties. These notions are defined below, but an informal description is as follows: $\Phi$ is an independence property if $\Phi(A)$ says all points in $A$ satisfy a combinatorial relation (e.g.~$A$ is $G$-independent for a graph $G$), and $\Psi$ is a closure property if $\Psi(A)$ says that $A$ contains all points which stand in a combinatorial relation with points from $A$ (e.g.~$A$ is a box).

\begin{dfn}

We say that a property $\Phi(A)$ of a set in $\baire$ is an \textbf{independence property} if there is some $\Delta^1_1$ property $\phi$ such that 
\[\Phi(A)\lra \neg (\exists \mathbf{x},y )\; \mathbf{x}\in A^k\mbox{ and }\phi(x_1,...,x_n,y).\]
\end{dfn}

Note that if $\Phi$ is an independence property then $\Phi$ is $\Pi^1_1$ on $\Sigma^1_1$. In practice, many $\Pi^1_1$ on $\Sigma^1_1$ properties are independence properties. But the two notions are not equivalent, and it is unclear how far beyond independence properties the results below generalize.

\begin{dfn}
 A property $\Psi(A)$ of sets is a \textbf{closure property} if there is a $\Delta^1_1$ property $\psi$ so that $\Psi(A)$ if and only if
 \[(\forall \mathbf{x}\in A^k)(\forall y,z) \; \psi(\mathbf{x},y,z)\rightarrow z\in A.\] For a closure property $\Psi$ and set $A$, the $\Psi$-closure of $A$ is $A^{\Psi}:=\bigcup_m f^m(A)$, where
 \[f(A):=A\cup \{z: (\exists \mathbf{x}\in A^k)(\exists y) \psi(x,y,z)\}.\]
\end{dfn}

Of course, the $\Psi$-closure of any set satisfies $\Psi$. And, if $A$ is $\Sigma^1_1$, then so is $A^{\Psi}$.

The key point about independence and closure properties (and conjunctions thereof) is that any Gandy--Harrington condition which forces a generic to be in set with one of these properties is contained in a $\Delta^1_1$ set with the same property. 

\begin{dfn}
 A property $\Phi(A)$ is \textbf{reflectable} if it is $\Pi^1_1$ on $\Delta^1_1$, $\Phi(B)$ is absolute between $V$ and $V[G]$ for any $\bb P_i$-generic $G$ and Borel set $B$, and the following condition holds: whenever $B$ is Borel, $A\in \bb P_1$, $A\Vdash \dot x\in B$, and $\Phi(B)$, there is some $\tilde A\in \Delta^1_1$ so that $A\subseteq \tilde A$ and $\Phi(\tilde A)$.
\end{dfn}

Most natural $\Pi^1_1$ on $\Delta^1_1$ properties (including closure and independence properties) will be $\bp11$ on $\bd11$ as well, so the absoluteness assumption in this definition is usually automatic.

\begin{lem}\label{theft} The following properties are all reflectable:
\begin{enumerate}
    \item $\bigvee_{i\in \omega} \Phi_{i\in\omega}(A)$, where $\ip{\Phi_i:i\in\omega}$ is a $\Delta^1_1$ sequence of reflectable properties
    \item $\Phi(f[A])$, where $\Phi$ is reflectable and $f:\baire\rightarrow\baire$ is a $\Delta^1_1$ bijection
    \item $\Phi\wedge \Psi$ where $\Phi$ is an independence property and $\Psi$ is a closure property.
\end{enumerate}
\end{lem}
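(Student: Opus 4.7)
For (1), suppose $B$ is Borel with $\bigvee_i \Phi_i(B)$ and $A \in \bb P_1$ forces $\dot x \in B$, i.e.\ $A \subseteq B$. Pick any $i$ with $\Phi_i(B)$, apply the reflectability of $\Phi_i$ to obtain $\tilde A \in \Delta^1_1$ with $A \subseteq \tilde A$ and $\Phi_i(\tilde A)$, and note $\bigvee_j \Phi_j(\tilde A)$. The remaining requirements of reflectability follow routinely: a $\Delta^1_1$-indexed $\omega$-disjunction of uniformly $\Pi^1_1$ on $\Delta^1_1$ properties is again $\Pi^1_1$ on $\Delta^1_1$, and absoluteness of each $\Phi_i$ on Borel arguments passes to the disjunction.

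For (2), I push the hypothesis through $f$. The containment $A \subseteq B$ becomes $f[A] \subseteq f[B]$; the set $f[A]$ is $\Sigma^1_1$ and $f[B]$ is a Borel set with $\Phi(f[B])$. Reflectability of $\Phi$ applied to the $\Sigma^1_1$ set $f[A]$ yields $\tilde C \in \Delta^1_1$ with $f[A] \subseteq \tilde C$ and $\Phi(\tilde C)$. Setting $\tilde A := f^{-1}[\tilde C]$ gives a $\Delta^1_1$ superset of $A$ with $\Phi(f[\tilde A]) = \Phi(\tilde C)$, using that $f$ is a bijection. The definability and absoluteness clauses pull back through $f$.

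For the substantive case (3), I first replace $A$ with its $\Psi$-closure $A^\Psi$. This is $\Sigma^1_1$ (a countable union of iterated $\Sigma^1_1$-projections of $A$ through $\psi$) and satisfies $\Psi$ by construction. Since $B$ is $\Psi$-closed and $A \subseteq B$, we have $A^\Psi \subseteq B$, so $\Phi(A^\Psi)$ holds too, because the independence property $\Phi$ is inherited by subsets. It remains to find a $\Delta^1_1$ superset $\tilde A \supseteq A^\Psi$ on which both $\Phi$ and $\Psi$ continue to hold.

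The main obstacle is that $\Psi$, unlike $\Phi$, is not directly $\Pi^1_1$ on $\Sigma^1_1$, because ``$z \in X$'' occurs positively in its defining formula; I plan to sidestep this via a diagonal application of the second reflection theorem of \cite[Lemma 1.4]{HMS}. Concretely, introduce the auxiliary two-variable relation
\[\chi(Z, Y) := \Phi(Z) \wedge \forall \vec x \in Z^k\, \forall y, z\,\bigl[\psi(\vec x, y, z) \rightarrow z \in Y\bigr],\]
whose diagonal $\chi(X, X)$ is precisely $\Phi(X) \wedge \Psi(X)$, and which is $\Pi^1_1$ in $Z$ uniformly in the outer parameter $Y$ and monotone upward in $Y$ (enlarging $Y$ only makes the implication easier). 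Since $\chi(A^\Psi, A^\Psi)$ holds, the diagonal form of second reflection produces a $\Delta^1_1$ superset $\tilde A \supseteq A^\Psi$ with $\chi(\tilde A, \tilde A)$, i.e.\ $\Phi(\tilde A) \wedge \Psi(\tilde A)$. The remaining definability and absoluteness requirements for the conjunction then reduce immediately to those for $\Phi$ and for $\Psi$ on Borel arguments: $\Phi$ is $\Pi^1_1$ on $\Sigma^1_1$ and absolute, and $\Psi$ applied to a Borel set is arithmetic in the code, hence $\Pi^1_1$ on $\Delta^1_1$ and absolute.
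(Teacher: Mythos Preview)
Your proof is correct and follows the same overall architecture as the paper: for (3) you pass to the $\Psi$-closure $A^\Psi$, verify that $\Phi\wedge\Psi$ holds there, and then invoke the second reflection theorem with essentially the same two-variable predicate to obtain a $\Delta^1_1$ superset. The one noteworthy difference is methodological rather than structural: where the paper argues via product Gandy--Harrington forcing (building conditions in $\bb P_{k+2}$ to derive contradictions from $\Psi(B)$ and $\Phi(B)$ in a generic extension), you instead invoke at the outset the equivalence ``$A\Vdash\dot x\in B$ iff $A\subseteq B$'' for $\Sigma^1_1$ $A$ and Borel $B$, and then reason purely set-theoretically ($A^\Psi\subseteq B$ because $B$ is $\Psi$-closed; $\Phi(A^\Psi)$ because independence properties are hereditary under subsets). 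Your route is shorter and avoids the auxiliary product-forcing step entirely, at the cost of relying on that containment characterization up front; the paper's forcing formulation, by contrast, keeps everything phrased in the language used elsewhere in the paper and makes the passage to higher-arity conditions explicit, which matters for the later variants in Section~\ref{general section}.
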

\begin{proof} Checking $(1)$ and $(2)$ is routine. For $(3)$, suppose $A\Vdash \dot x\in B$ and $(\Phi\wedge \Psi)(B)$. We first show by induction on $m$ that $f^m(A)\Vdash \dot x\in B$, with $f$ as in the definition of $A^\Psi$. The base case $m=0$ is our assumption that \[f^0(A)=A\Vdash \dot x\in B.\] 

Suppose toward contradiction that $f^m(A)\Vdash \dot x\in B$, but that there is some $p\subseteq f^{m+1}(A)$ so that $p\Vdash \dot x\not\in B.$ We have nonempty condition \[q:=\{(\mathbf{x},y,z):\mathbf{x}\in \left(f^m(A)\right)^k,\; z\in p,\mbox{ and}\;\psi(\mathbf{x},y,z) \}\in \bb P_{k+2}.\] And, by the induction hypothesis \[q\Vdash \dot{ \mathbf{x}}\in B^k\;\wedge\;\dot z\not\in B\;\wedge \;\psi(\dot{\mathbf{x}},\dot y,\dot z)\] contradicting the fact that $\Psi(B)$.

Now suppose that $A\Vdash \dot x\in B$ and $(\Phi\wedge \Psi)(B)$. From the above, we may assume $A^{\Psi}=A$. Suppose toward contradiction that $\neg \Phi(A)$. Then \[C=\{(\mathbf{x},y): \mathbf{x}\in A^k\mbox{ and } \phi(\mathbf{x},y)\}\in \bb P_{n\times k} \] and since $A\Vdash \dot x\in B$, \[C\Vdash \dot {\mathbf{x}}\in B^k\;\wedge\; \phi(\dot {\mathbf{x}},\dot y ) \] which contradicts the fact that $\Phi(B).$ 

So, $(\Phi\wedge \Psi)(A)$, and by the second reflection theorem applied to \[\Theta(X,Y):=(\forall \mathbf{x}\in (\baire\sm X)^k, y,z) \;\left(\psi(\mathbf{x},y,z)\rightarrow z\in Y\right)\mbox{ and }\; \neg \phi(\mathbf{x},y)\] there is a $\Delta^1_1$ set $\tilde A\supseteq A$ so that $(\Phi\wedge \Psi)(\tilde A).$

\end{proof}

With the lemma above in hand, we can quickly prove a general effectivization result for countable covers by independent sets.

\begin{thm}\label{general}
Fix a reflectable property $\Phi$. Suppose $X\in \Delta^1_1$ and $X\subseteq\bigcup_{i\in\omega} B_i$, where each $B_i$ is a Borel set so that $\Phi(B_i)$. Then there is a $\Delta^1_1$ sequence of $\Delta^1_1$ sets $\ip{A_i: i\in\omega}$ so that $\Phi(A_i)$ for all $i$ and $X\subseteq\bigcup_{i\in \omega} A_i$.
\end{thm}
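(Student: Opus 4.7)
The plan is to combine the forcing scheme from the first examples with a $\Pi^1_1$-uniformization plus $\Sigma^1_1$-separation maneuver to extract the desired $\Delta^1_1$ sequence.

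For the forcing part, I would let $\mathcal{C}$ be the set of $\Delta^1_1$-codes $c$ such that $\Phi(A_c)$ holds, where $A_c$ denotes the set coded by $c$. Since $\Phi$ is $\Pi^1_1$ on $\Delta^1_1$, $\mathcal{C}$ is $\Pi^1_1$, and using the $\Pi^1_1$ half of each $\Delta^1_1$-code to evaluate membership shows that $U:=\bigcup_{c\in \mathcal{C}}A_c$ is $\Pi^1_1$. Hence $\widetilde X:=X\setminus U$ is $\Sigma^1_1$. Assume toward contradiction that $\widetilde X\ne \emptyset$. Since $\widetilde X\subseteq X\subseteq \bigcup_i B_i$, there is some $i$ with $A:=\widetilde X\cap B_i$ a nonempty $\Sigma^1_1$ condition; this condition satisfies $A\Vdash \dot x\in B_i$. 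Because $\Phi(B_i)$ is absolute between $V$ and $V[G]$ (part of the definition of reflectability), the reflectability clause yields a $\Delta^1_1$ set $\tilde A\supseteq A$ with $\Phi(\tilde A)$. But then a $\Delta^1_1$-code for $\tilde A$ lies in $\mathcal{C}$, so $\tilde A\subseteq U$, contradicting $\emptyset\ne A\subseteq \widetilde X\cap \tilde A\subseteq \widetilde X\cap U$. Hence $X\subseteq U$.

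To finish I would apply $\Pi^1_1$-uniformization to the $\Pi^1_1$ relation $R(x,c):=(x\notin X)\vee (c\in\mathcal{C}\wedge x\in A_c)$, which is total on $\baire$ by the previous step. Since its range is contained in $\omega$, the $\Pi^1_1$ uniformizing function $f$ is automatically $\Delta^1_1$. Then $f[X]$ is a $\Sigma^1_1$ subset of $\mathcal{C}$, while $\omega\setminus \mathcal{C}$ is $\Sigma^1_1$, so $\Sigma^1_1$-separation yields a $\Delta^1_1$ set $\mathcal{D}\subseteq \omega$ with $f[X]\subseteq \mathcal{D}\subseteq \mathcal{C}$. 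Enumerating $\mathcal{D}$ in its natural order (padding with a fixed element of $\mathcal{D}$ if $\mathcal{D}$ is finite) produces a $\Delta^1_1$ sequence of codes $\langle d_i:i\in \omega\rangle$, and setting $A_i:=A_{d_i}$ gives the required $\Delta^1_1$ sequence of $\Delta^1_1$ sets with $\Phi(A_i)$ for each $i$ and $X\subseteq\bigcup_i A_i$.

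The step to handle most carefully is the transition from a Borel cover to a forced membership in some single $B_i$ below a genuine Gandy--Harrington condition. This hinges on two bookkeeping observations: that a $\Pi^1_1$-indexed union of $\Delta^1_1$ sets is again $\Pi^1_1$ (so that $\widetilde X$ really is $\Sigma^1_1$ and hence a legitimate forcing condition), and the absoluteness clause in the definition of reflectable (so that $\Phi(B_i)$ continues to hold in the generic extension, allowing reflectability to provide $\tilde A$). Once those are in place the forcing contradiction is essentially immediate from the definition of reflectable, and the extraction of a $\Delta^1_1$ sequence is a routine application of standard effective descriptive set theory.
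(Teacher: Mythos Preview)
Your extraction of the $\Delta^1_1$ sequence via $\Pi^1_1$ number uniformization and $\Sigma^1_1$-separation is correct and in fact fills in a step the paper leaves implicit. The problem is earlier, in the forcing step.

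You write: ``there is some $i$ with $A:=\widetilde X\cap B_i$ a nonempty $\Sigma^1_1$ condition.'' This is false in general. The sets $B_i$ are merely Borel (boldface), not $\Delta^1_1$; that is precisely what the theorem is trying to remedy. So $\widetilde X\cap B_i$ is only $\Sigma^1_1(z)$ for whatever real $z$ codes $B_i$, and in particular it need not lie in $\bb P_1$. Since the reflectability clause in the definition only applies to conditions $A\in\bb P_1$, you cannot invoke it for this $A$, and the contradiction does not go through.

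The repair is exactly what the paper does and what you almost set yourself up to do: work with the forcing relation rather than with the literal intersection. The set $\widetilde X$ itself is a genuine $\bb P_1$ condition, and the containment $X\subseteq\bigcup_i B_i$ is a $\Pi^1_1$(parameter) statement, hence absolute; thus $\widetilde X\Vdash(\exists i)\,\dot x\in B_i$. Now refine $\widetilde X$ in $\bb P_1$ to a lightface $\Sigma^1_1$ condition $A\subseteq\widetilde X$ deciding the value of $i$, i.e.\ $A\Vdash\dot x\in B_i$ for some fixed $i$. This $A$ is in $\bb P_1$, reflectability yields $\tilde A\in\Delta^1_1$ with $A\subseteq\tilde A$ and $\Phi(\tilde A)$, and you get the desired contradiction with $A\subseteq\widetilde X$. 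Your final paragraph even flags ``the transition from a Borel cover to a forced membership in some single $B_i$ below a genuine Gandy--Harrington condition'' as the delicate point---it is, and the shortcut of intersecting with $B_i$ does not survive it.
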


\begin{proof}

Suppose toward contradiction $X$ admits no $\Delta^1_1$ sequence as described. Consider the following set: \[X'=X\sm \bigcup\{A\in \Delta^1_1 : \Phi(A)\}.\] Note that $X'$ is $\Sigma^1_1$.

By assumption, $X'$ is nonempty, so it is in $\bb P_1$. We have
\[X'\Vdash (\exists i) \; \dot x\in B_i\] so for some $i\in\omega$ and $A\subseteq X',$ \[A\Vdash \dot x\in B_i.\] But then, $\Phi(\tilde A)$ for some $\Delta^1_1$ set $\tilde A\supseteq A$. So $A$ avoids $X$, which contradicts our choice of $A$. 
\end{proof}

\subsection{Minor Technical Variations}                                          

We want two minor technical variations on this theorem. The first of these variations says that we can effectivize countable Borel families whose projections cover a given space, provided the projection map has countable fibers.

\begin{thm} \label{technical variant}
Suppose that $\Phi(A)$ is a reflectable property, $R\subseteq \baire^{2}$ is $\Delta^1_1$ with countable sections, and $\Phi(A)$ implies $A\subseteq R$. If $X\subseteq \baire^n$ admits a countable cover by Borel sets of the form $\pi_1(A)$ with $\Phi(A)$, then $X$ admits a cover by $\Delta^1_1$ sets of this form.
\end{thm}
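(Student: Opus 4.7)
The plan is to mimic the proof of Theorem \ref{general}, but perform the forcing in $\bb P_2$ on $\baire^2$ instead of $\bb P_1$ on $\baire$ --- so that reflectability can be applied directly to the sets $A\subseteq R\subseteq \baire^2$. This requires reading ``reflectable'' for $\Phi$ as referring to $\bb P_2$-forcing; the proof of Lemma \ref{theft} already works unchanged with $\bb P_n$ in place of $\bb P_1$, so this is a mild reinterpretation.

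After relativizing to a code for the sequence $\ip{B_i}$, I may assume the $B_i$ are uniformly $\Delta^1_1$. Let $Z:=\bigcup_i B_i\subseteq R$, which is $\Delta^1_1$, with $\pi_1(Z)\supseteq X$. Suppose toward contradiction that $X$ admits no $\Delta^1_1$-indexed sequence of $\Delta^1_1$ sets $A_n$ with $\Phi(A_n)$ and $X\subseteq\bigcup_n \pi_1(A_n)$. Writing $\mathcal C$ for the $\Pi^1_1$ set of codes of $\Delta^1_1$ $\Phi$-sets, set
\[ Z' := Z\sm\bigcup\{A\in\Delta^1_1:\Phi(A)\}. \]
As in the proof of Theorem \ref{general}, $\bigcup\{A\in\Delta^1_1:\Phi(A)\}$ is $\Pi^1_1$, and hence $Z'$ is $\Sigma^1_1$. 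If $Z'$ were empty, the $\Pi^1_1$ relation ``$n\in\mathcal C \wedge (x,y)\in A_n$'' would be total on $Z$; $\Pi^1_1$ number-uniformization would yield a $\Delta^1_1$ family of $\Delta^1_1$ $\Phi$-sets covering $Z$, whose projections cover $\pi_1(Z)\supseteq X$, contradicting the assumption.

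Thus $Z'$ is a nonempty $\bb P_2$-condition, and since $Z'\Vdash (\dot x,\dot y)\in \bigcup_i B_i$, there is some $A\subseteq Z'$ in $\bb P_2$ and some $i$ with $A\Vdash (\dot x,\dot y)\in B_i$. Reflectability of $\Phi$ (in its $\bb P_2$-form) yields $\tilde A\in\Delta^1_1$ with $A\subseteq\tilde A$ and $\Phi(\tilde A)$; but $A\subseteq Z'$ is disjoint from every $\Delta^1_1$ $\Phi$-set, so $A=\emptyset$ --- contradiction. The main obstacle is really just bookkeeping: checking the complexity of $Z$ and $Z'$ in $\baire^2$ (using that a countable union of uniformly $\Delta^1_1$ sets is $\Delta^1_1$) and confirming that the reflectability framework transfers cleanly from $\bb P_1$ to $\bb P_2$. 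No new ideas beyond those in Section \ref{general section} are required.
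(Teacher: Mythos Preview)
Your relativization step is the gap. The theorem asserts that from a cover of $X$ by projections of \emph{Borel} $\Phi$-sets one obtains a cover by projections of plain $\Delta^1_1$ $\Phi$-sets---not $\Delta^1_1$ relative to a parameter. If you relativize to a code $c$ for the sequence $\ip{B_i}$, your argument produces only $\Delta^1_1(c)$ witnesses, which is vacuous: the $B_i$ themselves are already $\Delta^1_1(c)$. Without the relativization, $Z=\bigcup_i B_i$ is merely Borel, so $Z'$ is not $\Sigma^1_1$ and you cannot treat it as a $\bb P_2$-condition. Replacing $Z$ by the genuinely $\Delta^1_1$ set $\widetilde X:=R\cap(X\times\baire)$ does not help either: a generic $(\dot x,\dot y)$ below $\widetilde X$ need not lie in any $B_i$, since the hypothesis only says that \emph{some} point in the fiber over $\dot x$ does.

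The paper's proof supplies exactly the missing idea. Using the effective Feldman--Moore theorem, one gets a $\Delta^1_1$ sequence of involutions $\ip{f_j:j\in\omega}$ such that each fiber $\{(x,y'):(x,y')\in R\}$ equals $\{f_j(x,y):j\in\omega\}$. Then $X\subseteq\bigcup_i\pi_1(A_i)$ is equivalent to $\widetilde X\subseteq\bigcup_{i,j}f_j[A_i]$, converting the projection problem into a direct covering problem for the $\Delta^1_1$ set $\widetilde X$ with respect to the property $\bigvee_j\Phi(f_j[A])$, which is reflectable by Lemma~\ref{theft}. Theorem~\ref{general} then applies verbatim. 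The involutions are precisely what let one pass from ``some point in the fiber is covered'' to ``this particular point is covered,'' and that is the step your approach cannot make.
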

\begin{proof}
By the effective Feldman-Moore theorem (see Theorem \ref{effectivelnfm} and the preceding comments in Section \ref{applications} below), there is a $\Delta^1_1$ sequence of involutions $\ip{f_i:i\in\omega}$ so that, for any $x\in \baire$ and $(x,y)\in R$, \[\{(x,y'): (x,y')\in R\}=\{f_i(x,y): i\in\omega\}.\] Then, $X=\bigcup_i \pi_1(A_i)$ if and only if \[\{(x,y)\in R: x\in X\}=\bigcup_i\bigcup_j \{f_j(x,y): (x,y)\in A_i\}.\]

So, $X$ admits a cover by $\Delta^1_1$ sets of the form $\pi_1(A)$ with $\Phi(A)$ if and only if $\widetilde X=\{(x,y)\in R: x\in X\}$ admits a cover by $\Delta^1_1$ sets so that $(\exists i)\Phi(f_i[A])$. By Lemma \ref{theft}, $(\exists i)\Phi(f_i[A])$ is reflectable, and we can apply the previous theorem.
\end{proof}

The second variation generalizes beyond covering spaces with unions of sets to covering spaces with unions of intersections of unions of sets. One could extend this further to arbitrary finite alternations of unions and intersections with mostly notational changes.

\begin{thm}\label{silly generalization}
Fix a $\Delta^1_1$ sequence of reflectable properties $\ip{\Phi_{i,j}:i,j\in\omega}$ and for $i,j\in \omega$. Suppose there is a family of Borel sets $\ip{B_{i,j,k}: i,j,k\in\omega}$ such that \[\widetilde X=\bigcup_{i} \bigcap_{j}\bigcup_{k} B_{i,j,k},\] where $\widetilde X\subseteq \baire$ is $\Delta^1_1$ and for all $i,j,k\in \omega$ \[\Phi_{i,j}(B_{i,j,k}).\] Then there is a $\Delta^1_1$ such family.
\end{thm}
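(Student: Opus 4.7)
My plan is to mimic the proof of Theorem \ref{general} at the outer $\bigcup_i$ level, but to handle the inner $\bigcap_j\bigcup_k$ structure via a nested application of the same forcing-plus-reflection technique inside the forcing argument, and then extract a countable family by a final reflection.

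I will define the set $Z$ of points $x\in\baire$ admitting a $\Delta^1_1$ witness, namely an $i\in\omega$ and a $\Delta^1_1$ sequence of $\Delta^1_1$ sets $\ip{C_{j,k}:j,k\in\omega}$ with $\Phi_{i,j}(C_{j,k})$ for all $j,k$ and $x\in\bigcap_j\bigcup_k C_{j,k}$. The crucial first calculation is that $Z$ is $\Pi^1_1$: the matrix inside is a conjunction of $\Pi^1_1$ conditions (code validity; the $\Phi_{i,j}$-conjunction over $j,k$; membership in the $\bigcap_j\bigcup_k$-set), and the two outer existential quantifiers range over $\omega$, so the whole thing is $\Pi^1_1$ by countable-union closure. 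Consequently $X':=\widetilde X\setminus Z$ is $\Sigma^1_1$; I assume for contradiction that $X'\neq\emptyset$, putting $X'\in\bb P_1$. Since $X'\Vdash\dot x\in\bigcup_i\bigcap_j\bigcup_k B_{i,j,k}$, I refine to some $A\subseteq X'$ in $\bb P_1$ and some fixed $i_0$ with $A\subseteq\bigcap_j\bigcup_k B_{i_0,j,k}$.

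The key internal step is to show $A$ itself is contained in a $\Delta^1_1$ set of the form $\bigcap_j\bigcup_k C_{j,k}$ with $\Phi_{i_0,j}(C_{j,k})$; this forces $A\subseteq Z$, contradicting $A\subseteq X'$. For each $j$, I re-run the proof of Theorem \ref{general} starting from the $\Sigma^1_1$ set $A$: the set $A\setminus\bigcup\{\tilde C\in\Delta^1_1:\Phi_{i_0,j}(\tilde C)\}$ is $\Sigma^1_1$, and if nonempty it forces $\dot x$ into some $B_{i_0,j,k}$, whose reflectability immediately gives a contradiction. So for every $j$, $A$ is covered by $\Delta^1_1$ sets satisfying $\Phi_{i_0,j}$. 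Applying $\Pi^1_1$-on-$\Pi^1_1$ reflection (\cite[Lemma 1.4]{HMS}) to the $\Pi^1_1$ set of pairs $(j,c)$ with $c$ coding a $\Delta^1_1$ set satisfying $\Phi_{i_0,j}$, together with the $\Pi^1_1$ covering assertion, extracts a single $\Delta^1_1$ sequence $\ip{C_{j,k}:j,k\in\omega}$ with $A\subseteq\bigcap_j\bigcup_k C_{j,k}$, and the contradiction follows.

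Having established $\widetilde X\subseteq Z$, one more round of $\Pi^1_1$-on-$\Pi^1_1$ reflection — now on the $\Pi^1_1$ parameterization of witnesses, relative to the $\Pi^1_1$ covering statement — produces a $\Delta^1_1$-enumerated sequence $\ip{(i_n,\ip{C_{j,k}^{(n)}}):n\in\omega}$ of witnesses jointly covering $\widetilde X$, which I reindex (grouping entries by shared $i_n$, padding empty slots with a trivially $\Phi_{i,j}$-satisfying set, and intersecting each $C_{j,k}^{(n)}$ with $\widetilde X$ to upgrade containment to equality) to the desired $\Delta^1_1$ family $\ip{A_{i,j,k}:i,j,k\in\omega}$. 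I expect the main obstacles to be the complexity bookkeeping that certifies $Z$ is $\Pi^1_1$ (so $X'$ is a valid Gandy--Harrington condition), ensuring each of the three iterated reflections truly lands in $\Delta^1_1$ rather than just $\Pi^1_1$, and — as a minor secondary point — checking that intersecting with $\widetilde X$ preserves the $\Phi_{i,j}$-properties, which is automatic for independence properties and straightforward to verify for the conjunctions with closure properties considered in the paper.
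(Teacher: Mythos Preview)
Your approach is correct (with the minor slip that ``$A\subseteq\bigcap_j\bigcup_k B_{i_0,j,k}$'' should read ``$A\Vdash\dot x\in\bigcap_j\bigcup_k B_{i_0,j,k}$''; ground-model inclusion does not follow from the forcing statement, but only the forcing statement is needed downstream). However, it is organized quite differently from the paper's proof. The paper does not define your set $Z$ at all; instead it works on the product $\widetilde X\times\omega^2$ and sets up the single $\Sigma^1_1$ set
\[X=(\widetilde X\times\omega^2)\setminus\bigcup\{A\times\{(i,j)\}:A\in\Delta^1_1,\ \Phi_{i,j}(A)\}.\]
The quantifier alternation $\bigcup_i\bigcap_j$ is then handled \emph{inside} one forcing argument: one forces below the $\Sigma^1_1$ set $\{x:(\forall i)(\exists j)\,(x,i,j)\in X\}$, picks $\tilde i$ from the Borel witness, refines to land in the fiber $\{x:(x,\tilde i,\tilde j)\in X\}$ for some bad $\tilde j$, then picks $\tilde k$ and invokes reflectability once. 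Your route instead front-loads the complexity calculation that $Z$ is $\Pi^1_1$, runs a nested instance of Theorem~\ref{general} for each $j$, and follows with two rounds of $\Pi^1_1$-on-$\Pi^1_1$ reflection to bundle the witnesses. The paper's argument is shorter and sidesteps precisely the bookkeeping you correctly flag as the main obstacle; yours is more modular (it literally calls the earlier theorem as a subroutine) and makes the inductive extension to longer $\bigcup\bigcap\bigcup\cdots$ alternations more transparent. Finally, your worry about intersecting with $\widetilde X$ to upgrade $\subseteq$ to $=$ is not addressed by the paper's proof either; in the only application (Theorem~\ref{arrays}) the $\Phi_{i,j}$ are independence properties, so $\emptyset$ pads unused slots and intersection with $\widetilde X$ is harmless.
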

\begin{proof}
Let \[X=(\widetilde X\times \omega^2)\sm \bigcup\{ A\times \{(i,j)\} : A\in \Delta^1_1, (i,j)\in \omega^2,\Phi_{i,j}(A)\}.\] If we have that
\[(\forall x )(\exists i )(\forall j )\; (x,i,j)\not\in X,\] then we can find our $\Delta^1_1$ family. So suppose towards contradiction that
\[(\exists x)(\forall i )(\exists j )\; (x,i,j)\in X.\]  We can find a condition $p\subseteq \{x: (\forall i)( \exists j )\;(x,i,j)\in X\}$, and an index $\tilde i$ so that \[p\Vdash (\forall j)(\exists k)\; \dot x\in B_{\tilde i, j,k}.\] Refining $p$, we can find some $\tilde j$ so that 
\[p\subseteq \{x: (x,\tilde i,\tilde j)\in X\}.\] But then, refining $p$ again, we can find $\tilde k$ so that 
\[p\Vdash \dot x\in B_{\tilde i,\tilde j,\tilde k}.\] But then $p$ is contained in a $\Delta^1_1$ set $\tilde p$ so that $\Phi(\tilde p)$, and $\tilde p\times\{(\tilde i,\tilde j)\}\cap X=\emptyset$, which contradicts the fact that $p\subseteq \{x: (x,\tilde i,\tilde j)\in X\}$.
\end{proof}

\subsection{Transfinite constructions}

Our last variation is somewhat more complicated and covers problem where we want to cover an object with some ordinal length sequence of sets. This generalizes Theorem \ref{linearization}.

\begin{dfn}
Given $\overline A=\ip{A_{\alpha}:\alpha<\beta}$ a countable ordinal length sequence of sets in $\baire$ and $X\subseteq \baire$, define 
\[X_{\overline A}=X\sm \bigcup_{\alpha<\beta} A_{\alpha}.\] And, given $\s F$ a set of such sequences, let \[X_{\s F}=X\sm \bigcup_{\overline A\in \s F}\bigcup_{\alpha<\beta} A_{\alpha}=\bigcap_{\overline A\in \s F} X_{\overline A}.\]

Abusing notation slightly, identify the ordinal product $\omega\cdot \alpha$ with the lexicographic order on $\alpha\times \omega$. For $\overline A=\ip{A_{\beta}:\beta\in \omega\cdot \alpha}=\ip{A_{\beta, i}: (\beta,i)\in \alpha\times \omega}$, set $\overline A_0=\ip{A_{\beta, 0}: \beta\in \alpha}.$

Say that a property $\Phi(\ip{ A_{\beta}: \beta\in\omega\cdot \alpha})$ of an ordinal length sequence of sets (with length of the form $\beta\times \omega$) is a \textbf{refinement} property if for some $\Delta^1_1$ sequence of properties $\ip{\phi_i:i\in\omega}$ and $\Delta^1_1$ sequence of closure properties $\ip{\Psi_i:i\in\omega}$, $\Phi(\overline A)$ if and only if
    \[(\forall \beta, i)\left[ \Psi_i(A_{\beta, i})\wedge \left(\forall \mathbf{x}\in \baire_{(\overline A_0\res \beta)}^k\forall y\right)\; \mathbf{x}\in A_{\beta,i}^k\rightarrow\phi_i(\mathbf{x},y) \right] \]
\end{dfn}

Informally, a refinement property says an independence property holds for each $A_{\alpha,i}$ if we ignore points in earlier $A_{\alpha,0}$'s, and each $A_{\alpha,i}$ is $\Psi_i$-closed. For any refinement property property $\Phi$, the set ${\{\overline A\in\Delta^1_1: (\forall\beta )\; A_\beta\in \Delta^1_1\mbox{ and }\Phi(\overline A)\}}$ is $\Pi^1_1.$ 

If some countable collection of sequences satisfies a refinement property, so does the sequence obtained by concatenating them together. We will often use this in conjunction with the following proposition.

\begin{prop} \label{splice} Suppose $\s F$ is a $\Pi^1_1$ family of $\Delta^1_1$ sequences which is closed under $\Delta^1_1$ concatenations. For any $\Sigma^1_1$ set $p\subseteq \baire^n$, whenever $p\cap X_{\s F}^n=\emptyset$ there is a single $\Delta^1_1$ sequence $\overline A\in\s F$ so that $p\cap X_{\overline A}^n=0$.
\end{prop}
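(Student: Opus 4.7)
My plan is to pass from the $\Pi^1_1$ family $\s F$ to a $\Delta^1_1$ subfamily $\s G \subseteq \s F$ whose members collectively cover $p$ in the sense demanded by the hypothesis, and then to splice $\s G$ together into a single sequence via closure under $\Delta^1_1$ concatenations. After the harmless reduction of replacing $p$ with $p \cap X^n$ (which leaves both $p \cap X_{\overline A}^n$ and $p \cap X_{\s F}^n$ unchanged), the hypothesis $p \cap X_{\s F}^n = \emptyset$ rephrases as: for every $\mathbf{x} \in p$ there exist $\overline A \in \s F$, an index $\alpha$ of $\overline A$, and $i < n$ with $\mathbf{x}_i \in A_\alpha$. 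Viewing $\s F$ as a $\Pi^1_1$ subset of $\omega$ via hyperarithmetic codes $c$ for sequences $\overline A(c) = \ip{(A_c)_m : m \in \omega}$, this is exactly $\phi(\s F)$, where
\[ \phi(\s S) \;:=\; (\forall \mathbf{x} \in p)(\exists c \in \s S)(\exists m \in \omega)(\exists i < n)\; \mathbf{x}_i \in (A_c)_m. \]

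The key step is $\Pi^1_1$-on-$\Pi^1_1$ reflection, exactly as in the proofs of Theorems \ref{smooth} and \ref{linearization}. When $\s S \subseteq \omega$ is $\Pi^1_1$, the quantifier $\exists c \in \s S$ is a countable disjunction of $\Pi^1_1$ formulas, so $\phi(\s S)$ remains $\Pi^1_1$ and $\s S$ appears only positively; reflection therefore produces a $\Delta^1_1$ set $\s G \subseteq \s F$ with $\phi(\s G)$. Splicing then finishes the argument: since $\s G$ is $\Delta^1_1$ and $\s F$ is closed under $\Delta^1_1$ concatenations, the concatenation $\overline A^*$ of $\{\overline A(c) : c \in \s G\}$ in any $\Delta^1_1$ order on $\s G$ lies in $\s F$ and satisfies $\bigcup_\alpha A^*_\alpha = \bigcup_{c \in \s G,\, m} (A_c)_m$; rewriting $\phi(\s G)$ in terms of $\overline A^*$ gives $p \cap X_{\overline A^*}^n = \emptyset$, as required.

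I expect the main obstacle to be the complexity bookkeeping for $\phi$: the quantifier $\exists c \in \s S$ must range over a countable index set rather than over $\baire$, which forces us to treat $\s F$ as a $\Pi^1_1$ subset of $\omega$ via hyperarithmetic codes (after relativizing to any needed boldface parameter). Once this bookkeeping is in place, both the reflection and the concatenation step are routine.
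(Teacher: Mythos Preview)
Your proposal is correct and follows essentially the same approach as the paper: rewrite $p\cap X_{\s F}^n=\emptyset$ as a $\Pi^1_1$-on-$\Pi^1_1$ statement about $\s F$, reflect to a $\Delta^1_1$ subfamily $\s G\subseteq\s F$, enumerate $\s G$, and concatenate. The only cosmetic difference is that the paper phrases the reformulation with $n$ sequences $\overline A_1,\dots,\overline A_n$ (one per coordinate) rather than a single sequence covering some coordinate; your preliminary reduction to $p\subseteq X^n$ makes your single-sequence version cleaner, but the reflection and splicing steps are identical.
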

\begin{proof}
For any $\Sigma^1_1$ sets $X$ and $p$, $p\cap (X_{\s F})^n=\emptyset$ if and only
\[ (\forall \mathbf{x})\;\mathbf{x}\not\in p\mbox{ or }(\exists \overline A_1,...,\overline A_n\in \Delta^1_1)(\forall i)\left[ \overline A_i\in \s F\mbox{ and }(\exists \alpha_i)\; x_i\in (A_i)_{\alpha_i}\right].\] As a property of $\s F$, this is $\Pi^1_1$ on $\Pi^1_1$. So, if $\s F$ is a $\Pi^1_1$ collection of sequences with $p\cap X_{\s F}^n=\emptyset$, we can find a $\Delta^1_1$ subset $\s G\subseteq \s F$ so that $p\cap X_{\s G}^n=\emptyset$. We can find a $\Delta^1_1$ enumeration of the sequences in $\s G$ and concatenate them together to get a single sequence $\overline A$ so that $p\cap X^n_{\overline A}=\emptyset.$
\end{proof}

We can prove that if there is some family of ordinal length sequence of Borel sets satisfying a refinement property that covers a $\Delta^1_1$ space, then there is a $\Delta^1_1$ such sequence (whose length is an effective ordinal).

\begin{thm} \label{transfinite}
Suppose that $\Phi$ is a refinement property, $X\subseteq \baire^n$ is $\Delta^1_1$, and there a sequence of Borel sets $\overline B=\ip{B_{\beta}: \beta\in \omega\cdot \alpha}$ with $X_{\overline B}=\emptyset$, $\alpha<\omega_1$, and $\Phi(\overline B)$. Then, there is a $\Delta^1_1$ sequence of $\Delta^1_1$ sets $\overline A=\ip{ A_{\beta}: \beta\in\omega\cdot \alpha'}$ so that $X_{\overline A}=\emptyset$, $\Phi(\overline A)$ and $\alpha'<\omega_1^{CK}$.
\end{thm}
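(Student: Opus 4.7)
The plan is to follow the template of Theorem \ref{linearization} closely. Let $\s F$ be the collection of codes for $\Delta^1_1$ sequences $\overline A=\ip{A_\beta:\beta\in\omega\cdot\gamma}$ of length $\omega\cdot\gamma$ for some $\gamma<\omega_1^{CK}$ with each $A_\beta\in\Delta^1_1$ and $\Phi(\overline A)$. Since $\Phi$ is $\Pi^1_1$ on $\Delta^1_1$ and ``$c$ codes a $\Delta^1_1$ set'' is $\Pi^1_1$, the family $\s F$ is $\Pi^1_1$; and by inspection of the definition of a refinement property, concatenating two sequences whose lengths are ordinal multiples of $\omega$ preserves $\Phi$, so $\s F$ is closed under $\Delta^1_1$ concatenation. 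If $X_{\s F}=\emptyset$ then Proposition \ref{splice} applied with $p=\baire^n$ yields a single $\overline A\in\s F$ with $X_{\overline A}=\emptyset$, and we are done. So it suffices to derive a contradiction from $X_{\s F}\neq\emptyset$.

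Since $\s F$ is $\Pi^1_1$ and indexed by integer codes, the total union $\bigcup\{A_\beta:\overline A\in\s F,\;\beta<\mathrm{length}(\overline A)\}$ is $\Pi^1_1$, so $X_{\s F}$ is $\Sigma^1_1$. Force with $\bb P_1$ below $X_{\s F}$. Because $\overline B$ covers $X$, some condition forces $\dot x\in B_\beta$ for some $\beta<\omega\cdot\alpha$; let $\beta^*=\omega\cdot\delta+i^*$ be the least such $\beta$ across all $p\subseteq X_{\s F}$, and fix a witnessing $p\subseteq X_{\s F}$ with $p\Vdash \dot x\in B_{\beta^*}$. By minimality of $\beta^*$, $p$ is disjoint from $B_\gamma$ for all $\gamma<\beta^*$; in particular $p\subseteq \baire_{\overline B_0\res\delta}$. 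The refinement property applied to $\overline B$ then says that $B_{\beta^*}\cap \baire_{\overline B_0\res\delta}$ is $\phi_{i^*}$-independent (in the ordinary sense), and $B_{\beta^*}$ is $\Psi_{i^*}$-closed; hence $p$ itself is $\phi_{i^*}$-independent. The goal is to use these facts to produce a $\Delta^1_1$ sequence $\overline A\in\s F$ that covers $p$, contradicting $p\subseteq X_{\s F}$.

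The construction, parallel to the concatenation $f_0\fr f_1\fr\cdots\fr f_\infty$ in Theorem \ref{linearization}, has two parts: a ``prefix'' $\Delta^1_1$ sequence whose level-0 slots handle the points below level $\delta$, and a final slot containing a $\Delta^1_1$, $\Psi_{i^*}$-closed, $\phi_{i^*}$-independent superset of $p$ placed in position $(\delta',i^*)$ for a suitable computable $\delta'$. The main obstacle is the interaction of $\Psi_{i^*}$-closure with the ``modulo $\baire_{\overline A_0\res\delta'}$'' in the independence clause: the $\Psi_{i^*}$-closure of $p$ may acquire points outside $\baire_{\overline B_0\res\delta}$, and a bad tuple for $\phi_{i^*}$ lying entirely inside this closure would then only be harmless if its rogue coordinates are covered by the prefix's level-0 slots. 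I would handle this by applying the second reflection theorem to the $\Pi^1_1$-on-$\Pi^1_1$ statement asserting the simultaneous existence of (i) a $\Delta^1_1$ prefix $\overline A^0$ in $\s F$ whose level-0 union covers all points in $p^{\Psi_{i^*}}\setminus \baire_{\overline B_0\res\delta}$, and (ii) a $\Delta^1_1$, $\Psi_{i^*}$-closed superset $\tilde p$ of $p$ which is $\phi_{i^*}$-independent modulo $\baire_{\overline A^0_0}$. Concatenating $\overline A^0$ with the single slot $\tilde p$ (in the $i^*$-th column) produces the desired $\overline A\in\s F$ of computable length covering $p$, yielding the contradiction. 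Absoluteness of $\Phi$ for $\bb P_i$-generic extensions (stipulated in the definition of reflectable/refinement properties) is what lets all the forcing computations be pulled back to give the existence of such $\Delta^1_1$ witnesses.
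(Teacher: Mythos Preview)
Your overall architecture mirrors the paper's: define the $\Pi^1_1$ family $\s F$, reduce to $X_{\s F}\neq\emptyset$, pick the minimal index at which some $p\subseteq X_{\s F}$ forces $\dot x$ into a $B$-set, and then manufacture an $\overline A\in\s F$ covering $p$. The gap is in the step immediately after choosing $p$.

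You write: ``By minimality of $\beta^*$, $p$ is disjoint from $B_\gamma$ for all $\gamma<\beta^*$; in particular $p\subseteq\baire_{\overline B_0\res\delta}$ \ldots\ hence $p$ itself is $\phi_{i^*}$-independent.'' This inference is not valid. Minimality only says that no lightface $\Sigma^1_1$ condition below $X_{\s F}$ \emph{forces} $\dot x\in B_\gamma$; since the $B_\gamma$ are merely boldface Borel, $p\cap B_\gamma$ is not a Gandy--Harrington condition, and nothing prevents $p\cap B_\gamma\neq\emptyset$ in $V$. (Similarly, $p\Vdash\dot x\in B_{\beta^*}$ does not give $p\subseteq B_{\beta^*}$ in $V$.) So the ground-model inclusion $p\subseteq B_{\beta^*}\cap\baire_{\overline B_0\res\delta}$, on which your ``hence $p$ is $\phi_{i^*}$-independent'' rests, is unavailable. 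The downstream plan then breaks: in (i) you want a prefix in $\s F$ whose level-$0$ union covers $p^{\Psi_{i^*}}\setminus\baire_{\overline B_0\res\delta}$, but you give no reason why points of $p$ (or its closure) lying in $\bigcup_{\gamma<\delta}B_{\gamma,0}$ are covered by anything in $\s F$ --- those are boldface Borel sets, not $\Delta^1_1$ ones --- so the reflection you invoke has no true instance to reflect.

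The paper repairs exactly this point. First it replaces $p$ by its $\Psi_i$-closure (via the argument in Lemma~\ref{theft}, so that the closure still forces $\dot x\in B_{\alpha_0,i}$). Then, instead of claiming any ground-model disjointness, it looks at the $\Sigma^1_1$ set $q=\{\mathbf{x}\in p^k:(\exists y)\,\neg\phi_i(\mathbf{x},y)\}\cap X_{\s F}^k$ and argues \emph{by forcing}: if $q\neq\emptyset$ then $q$ forces every coordinate into $B_{\alpha_0,i}$ together with $\neg\phi_i$, so by $\Phi(\overline B)$ and absoluteness some coordinate is forced into an earlier $B_{\beta,0}$; projecting to that coordinate gives a condition below $X_{\s F}$ contradicting minimality of $\alpha_0$. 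Hence every bad tuple from $p^k$ already has a coordinate outside $X_{\s F}$, and now Proposition~\ref{splice} produces a single $\Delta^1_1$ prefix $\overline A\in\s F$; a second-reflection argument then yields the $\Delta^1_1$, $\Psi_i$-closed $A'\supseteq p$ that is $\phi_i$-independent modulo $\baire_{\overline A_0}$. The essential idea you are missing is that the link to $\s F$ (rather than to the boldface $\overline B$) is forged through this forcing contradiction, not through ground-model containments.
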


\begin{proof}
First, define $\s F$ by \[\s F=\{\overline A\in \Delta^1_1:\Phi(\overline A, )\mbox{ and } (\forall \beta)\; A_\beta\in \Delta^1_1\}.\] If $X_{\s F}=\emptyset$, then by Proposition \ref{splice}, there is single $\Delta^1_1$ sequence $\overline A$ so that $X_{\overline A}=\emptyset$. So, suppose toward contradiction that $X_{\s F}\not=\emptyset$. Note that $X_{\s F}$ is $\Sigma^1_1.$ So, 
\[X_{\s F}\Vdash \dot x\in B_{\alpha,i}\mbox{ for some }\alpha.\] Define \[\alpha_0:=\min\{\alpha: (\exists p\subseteq X_{\s F},i\in\omega)\; p\Vdash \dot x\in B_{\alpha,i}\}\] and fix some $i\in\omega$ and $p\subseteq X_{\s F}$ so that $p\Vdash \dot x\in B_{\alpha_0,i}$. By the proof of Lemma \ref{theft}, we may assume $\Psi_i(p)$.

If $q=\{(x_1,...,x_k)\in p^k:\neg \phi_i(x)\}\cap X_{\s F}$ is nonempty, then \[q\Vdash \dot x_1,...,\dot x_k\in B_{ \alpha_0,i}\wedge \;\neg\phi_i(x).\] Since $\Phi(\overline B)$, by absoluteness $q\Vdash (\exists j,\beta<\alpha_0)\; x_j\in B_{ \beta,0}$. But then we can find a condition below $p$ forcing $\dot x\in B_{\beta,0}$ with $\beta<\alpha_0$, contradicting minimality.

So, $\{(x_1,...,x_k)\in p^k: \phi_i(x)\}\cap X_{\s F}^k=\emptyset$, and by Proposition \ref{splice} there is a single $\Delta^1_1$ sequence $\overline A$ so that $\{(x_1,...,x_k)\in p^k: \phi(x)\}\cap X_{\overline A}^k=\emptyset$. But then by reflection we can find a $\Delta^1_1$ set $A'\supseteq p$ so that $\Psi_i(A')$ and $\phi_i(\mathbf x)$ for $\mathbf{x}\in (A')^k\cap (\baire_{\overline{A}_0})^k$. Then, setting $A'_j=A'$ if $j=i$ and $A'_j=A_{0,j}$ otherwise, we get a $\Delta^1_1$ sequence of sets $\overline A'=\ip{A'_j:j\in\omega}$ so that $p\subseteq A_i$ and $\overline A\fr \overline A'\in \s F$, contradicting our choice of $p$.
\end{proof}

The only application of this theorem we give below is to local colorings of graphs equipped with quasi-orders. However, we point out that the ``forceless, ineffective, and powerless" proofs of Miller typically work by marrying sequences of Borel sets satisfying a refinement property to the usual derivative process on a tree of attempts to build a homomorphism from a canonical obstruction (so that there is no homomorphism if and only if the tree is well-founded if and only if there a sequence of refining sets that cover $\baire$). So Theorem \ref{transfinite} should usually apply whenever Miller's methods apply.

\section{Applications} \label{applications}

In this section we give several applications of the results above to the study of Borel graphs and relations. Throughout, we assume that our graphs and relations are on $\baire$. The complexity results lift to other spaces by standard universality arguments. We start by giving poofs of two folklore theorems.

The classical Lusin--Novikov uniformization theorem says that any Borel relation with countable sections is the union of countably many Borel partial functions \cite[Theorem 18.10]{cdst}. One important corollary is the Feldman--Moore theorem, which says that any countable Borel equivalence relation is induced by a Borel group action and in fact is a union of countably many Borel involutions \cite[Theorem 1.3]{km}. There are elegant effective proofs of these theorems based on the effective perfect set theorem, but these effective proofs do not seem to be recorded in the literature. Here we show how the effective versions of these theorems can be deduced from the classical versions. 

\begin{thm} \label{effectivelnfm}
If $R$ is $\Delta^1_1$ relation with countable sections, then $R=\bigcup_i f_i$ where $\ip{f_i:i\in\omega}$ is a $\Delta^1_1$ partial function. Further, if $R$ is an equivalence relation, then each $f_i$ can be chosen to be an involution.
\end{thm}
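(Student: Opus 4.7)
The plan is to reduce both parts of the theorem to direct applications of Theorem \ref{general}, using the classical (non-effective) Lusin--Novikov and Feldman--Moore theorems as the ``Borel cover'' input.

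First I would identify the relevant property. For the Lusin--Novikov part, let $\Phi(A)$ for $A\subseteq\baire^2$ be the conjunction of ``$A$ is the graph of a partial function'' and ``$A\subseteq R$.'' Both conjuncts are independence properties: the first is witnessed by $\phi((x_1,y_1),(x_2,y_2))\equiv (x_1=x_2\wedge y_1\neq y_2)$ with $k=2$, and the second is witnessed by $\phi((x,y))\equiv (x,y)\notin R$ with $k=1$, which is $\Delta^1_1$ because $R$ is. So $\Phi$ is itself an independence property and hence reflectable by Lemma \ref{theft}(3) (taking the trivial closure). Now apply the classical Lusin--Novikov theorem to write $R=\bigcup_i f_i$ with each $f_i$ a Borel partial function; each $f_i$ is a Borel set satisfying $\Phi$. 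Since $R\in\Delta^1_1$, Theorem \ref{general} (applied in $\baire^2$ with the forcing $\bb P_2$) produces a $\Delta^1_1$ sequence of $\Delta^1_1$ sets $\ip{A_i:i\in\omega}$ with $\Phi(A_i)$ and $R\subseteq\bigcup_i A_i$. Because $\Phi(A_i)$ forces $A_i\subseteq R$, we actually get $R=\bigcup_i A_i$, giving the desired decomposition.

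For the Feldman--Moore strengthening, I would add symmetry to the property: let $\Phi'(A)$ be the conjunction of ``$A$ is the graph of a partial function,'' ``$A\subseteq R$,'' and ``$A$ is symmetric.'' The first two are independence properties as above, and symmetry, $(\forall(x,y)\in A)\,(y,x)\in A$, is a closure property in the sense of the paper (take $\psi((x,y),\cdot,z)\equiv z=(y,x)$). Hence $\Phi'$ is a conjunction of an independence property and a closure property, so Lemma \ref{theft}(3) applies and $\Phi'$ is reflectable. When $R$ is a $\Delta^1_1$ equivalence relation, classical Feldman--Moore gives a Borel cover of $R$ by graphs of Borel involutions, i.e.~by Borel sets satisfying $\Phi'$. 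Applying Theorem \ref{general} again in $\baire^2$ yields a $\Delta^1_1$ sequence of $\Delta^1_1$ sets $\ip{A_i:i\in\omega}$ with $\Phi'(A_i)$ and $R\subseteq\bigcup_i A_i$, and each $A_i$ is the graph of a $\Delta^1_1$ involution contained in $R$.

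There is no real obstacle: the bulk of the argument is already packaged inside Theorem \ref{general} and Lemma \ref{theft}. The only point requiring attention is verifying that the combinatorial requirements (being a function, being contained in $R$, and being symmetric) decompose cleanly into the independence/closure format the machinery demands, which they do. The classical Lusin--Novikov and Feldman--Moore theorems are invoked only to guarantee the existence of \emph{some} Borel cover; the effectivization is then automatic.
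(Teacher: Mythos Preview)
Your approach is essentially identical to the paper's: both reduce to Theorem \ref{general} via Lemma \ref{theft}(3), using the classical Lusin--Novikov and Feldman--Moore theorems to supply the initial Borel cover, and both verify that ``partial function'' is an independence property while ``symmetric'' is a closure property. The only substantive difference is that you explicitly fold ``$A\subseteq R$'' into the property (harmless, and arguably cleaner than intersecting with $R$ afterwards), while the paper adds one small step you omit: a symmetric partial function is only a \emph{partial} involution, and the paper observes that since the domain of a $\Delta^1_1$ partial function is $\Delta^1_1$, one extends to a total involution by setting $f(x)=x$ off the domain.
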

\begin{proof}
The classical Lusin--Novikov theorem tells us that $R$ is a union of Borel partial functions. A set $f\subseteq\baire^2$ is a partial function if and only if
\[(\forall x,y,y')\;(x,y),(x,y')\in f\rightarrow y=y'.\] This is an independence property, so by Theorem \ref{general} $R$ is a union of a $\Delta^1_1$ sequence of $\Delta^1_1$ partial functions.

And if $R$ is an equivalence relation, then the classical Feldman--Moore theorem says that $R$ is a union of Borel involutions. A set $f\subseteq\baire^2$ is a partial involution if and only if
\[f\mbox{ is a function}\mbox{ and }(\forall x,y)\;(x,y)\in f\rightarrow (y,x)\in f.\] This is a conjunction of an independence property and a closure property. So again by Theorem \ref{general}, $R$ is a union of a $\Delta^1_1$ sequence of $\Delta^1_1$ partial involutions. 

To finish, we need to show that any partial $\Delta^1_1$ involution extends to an involution. The domain of any $\Delta^1_1$ partial function is $\Delta^1_1$, so we can then extend a partial involution $f$ by setting $f(x)=x$ for $x\not\in \dom(f)$.
\end{proof}

Next, we can characterize graphs which can be (not necessarily symmetrically) generated by countable families of functions. That is, we reprove \cite[Theorem 4.1]{orientations}. 

\begin{dfn}
A graph $G$ is \textbf{generated} by a list of functions $\ip{f_i: i\in I}$ if $G\subseteq\{(x,y): y=f_i(x)\mbox{ or }x=f_i(y)\}.$
\end{dfn}

We could equivalently require $G=\{(x,y): y=f_i(x)\mbox{ or }x=f_i(y)\}$ if we allow partial functions. Since any $\Delta^1_1$ partial function extends to a $\Delta^1_1$ total function, this equivalence holds even when we restrict to Borel or $\Delta^1_1$ functions.

\begin{thm} \label{manyfunctions}
If $G$ is a $\Delta^1_1$ graph generated by a countable list of Borel functions, then $G$ is generated by a countable list of $\Delta^1_1$ functions.
\end{thm}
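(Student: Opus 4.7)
The plan is to cast the theorem as a covering problem and apply Theorem \ref{general} to a suitable disjunction of independence properties. Observe first that a graph is generated by $\ip{f_i}$ iff it is covered by the graphs of the $f_i$ together with the graphs of the $f_i^{-1}$. So let \[\Phi_0(A) : \lra (\forall x,y,y')\; (x,y),(x,y')\in A\rightarrow y=y'\] be the (independence) property that $A$ is a partial function, and let $\Phi_1(A) : \lra \Phi_0(\sigma[A])$ where $\sigma(x,y)=(y,x)$, i.e.\ $A^{-1}$ is a partial function. By Lemma \ref{theft}(2) and (3), both $\Phi_0$ and $\Phi_1$ are reflectable, hence by Lemma \ref{theft}(1) so is $\Phi := \Phi_0\vee \Phi_1$.

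Now the Borel hypothesis gives us the required Borel cover of $G$ by sets satisfying $\Phi$: given generating Borel functions $\ip{f_i:i\in\omega}$, set
\[ C_{i,0}=G\cap\{(x,y): y=f_i(x)\},\qquad C_{i,1}=G\cap\{(x,y): x=f_i(y)\}.\]
Each $C_{i,0}$ satisfies $\Phi_0$, each $C_{i,1}$ satisfies $\Phi_1$, and $G=\bigcup_i (C_{i,0}\cup C_{i,1})$. Since $G\subseteq \baire^2$ is $\Delta^1_1$, Theorem \ref{general} produces a $\Delta^1_1$ sequence $\ip{A_i:i\in\omega}$ of $\Delta^1_1$ sets covering $G$ with $\Phi(A_i)$ for every $i$.

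To finish, convert the $A_i$'s into total $\Delta^1_1$ functions generating $G$. For each $i$, the conditions $\Phi_0(A_i)$ and $\Phi_1(A_i)$ are $\Delta^1_1$ in (a code for) $A_i$, so we may split the indices into two $\Delta^1_1$ classes. If $\Phi_0(A_i)$, then $A_i$ is a $\Delta^1_1$ partial function; extend it to a total $\Delta^1_1$ function $g_i$ by sending $x\notin\dom(A_i)$ to some fixed base point (using, as in the proof of Theorem \ref{effectivelnfm}, that the domain of a $\Delta^1_1$ partial function is $\Delta^1_1$). If instead $\Phi_1(A_i)$, perform the same construction on $A_i^{-1}$ to obtain $g_i$. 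Any $(x,y)\in G$ lies in some $A_i$, hence either $y=g_i(x)$ or $x=g_i(y)$, so $\ip{g_i}$ generates $G$.

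There is no real obstacle: the only step with any content is recognizing that "generated by countably many functions" is exactly a countable cover by sets satisfying the reflectable disjunction $\Phi_0\vee\Phi_1$, after which Theorem \ref{general} does all the work and the extension-to-total-functions step is already contained in the proof of Theorem \ref{effectivelnfm}.
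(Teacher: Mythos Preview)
Your proof is correct and follows essentially the same route as the paper: both recast ``generated by countably many functions'' as a countable cover of $G$ by sets $A$ with ``$A$ is a partial function or $-A$ is a partial function,'' observe via Lemma~\ref{theft} that this disjunction is reflectable, and invoke Theorem~\ref{general}. One small imprecision: the predicate $\Phi_0(A_i)$ is $\Pi^1_1$ (not $\Delta^1_1$) in the code for $A_i$, but since $\{i:\Phi_0(A_i)\}\cup\{i:\Phi_1(A_i)\}=\omega$, $\Pi^1_1$ reduction yields the $\Delta^1_1$ split you need; alternatively, the paper sidesteps this entirely via the remark preceding the theorem that partial $\Delta^1_1$ functions suffice.
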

\begin{proof}
A graph $G$ is generated by a countable list of functions if and only if $G$ can be covered by countably many sets $f$ so that $f$ is a partial function or $-f$ is a partial function (where $-(x,y)=(y,x)$). Since being a function is an independence property and $e\mapsto -e$ is a $\Delta^1_1$ bijection, Lemma \ref{theft} tells us that this is a reflectable property.
\end{proof}
\begin{cor}
The set of Borel graphs generated by a countable family of Borel functions is $\Pi^1_1$ in the codes.
\end{cor}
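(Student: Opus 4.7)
The plan is to follow the template of the corollary to Theorem 2.1 verbatim. By Theorem \ref{manyfunctions} (which, like every result in this note, relativizes to any parameter $g$), a real $g$ codes a Borel graph $G$ generated by a countable family of Borel functions if and only if $g$ codes a graph $G$ and there is a countable family of $\Delta^1_1(g)$ functions generating $G$. So the task reduces to checking that the latter condition is $\Pi^1_1$ in $g$, which is done by rewriting the existential over $\Delta^1_1(g)$ witnesses as a countable existential over Kleene indices, exactly as in the earlier corollary.

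In detail, $g$ lies in the target set iff $g$ codes a graph $G$ and
\[(\exists n \in \omega)\; \phi_n^g \text{ is a Borel code, and } B_{\phi_n^g} \text{ codes a sequence } \langle f_i : i\in\omega\rangle \text{ of partial functions generating } G.\]
The clause ``$g$ codes a graph'' is $\Pi^1_1$ via the standard coding (``Borel code'' is $\Pi^1_1$, ``symmetric and irreflexive'' is a universal statement about pairs with $\Delta^1_1$ membership predicate). The clause ``$\phi_n^g$ is a Borel code'' is also $\Pi^1_1(g)$. The clause that the coded object is a sequence of partial functions unpacks as the independence statement $(\forall i,x,y,y')\,[(x,y),(x,y')\in f_i \to y=y']$, which is $\Pi^1_1(g,n)$ since Borel membership is $\Delta^1_1$ in the code. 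Finally, generation amounts to
\[(\forall (x,y))\; (x,y)\in G \to (\exists i\in\omega)\,[(x,y)\in f_i \text{ or } (y,x)\in f_i],\]
which is $\Pi^1_1(g,n)$. Since $\Pi^1_1$ is closed under countable unions, prepending $(\exists n\in\omega)$ preserves $\Pi^1_1$, and conjunction with the $\Pi^1_1$ condition on $g$ keeps the whole statement $\Pi^1_1$.

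There is no serious obstacle here; the entire content of the upper bound lies in Theorem \ref{manyfunctions}, and this corollary is purely bookkeeping. The one point worth noting is that we need not express totality of the $f_i$ (which would be $\Sigma^1_1$): the remark following Definition 4.3 permits us to work with partial functions throughout, since any $\Delta^1_1$ partial function extends to a $\Delta^1_1$ total one, so the equivalence between the two formulations survives the restriction to $\Delta^1_1$ witnesses.
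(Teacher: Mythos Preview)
Your proposal is correct and matches the paper's approach. The paper gives no explicit proof for this corollary, relying instead on the remark after Corollary~2.2 that all the effectivization results relativize and yield $\Pi^1_1$-in-the-codes bounds by the same bookkeeping; your write-up is precisely that bookkeeping carried out, including the correct observation that one should quantify over partial functions to avoid a $\Sigma^1_1$ totality clause.
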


We can prove a similar result for graphs generated by a single function. This argument is inspired by Hjorth and Miller's dichotomy for end selection, but we need to make use of some features of acyclic graphs to get around their assumption that $G$ is locally countable. This settles \cite[Problem 4.5]{orientations}.

\begin{thm} \label{1function}
If $G$ is $\Delta^1_1$ and generated by a single Borel function, then it is generated by a single $\Delta^1_1$ function.
\end{thm}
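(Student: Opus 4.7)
The plan is to combine the general Gandy--Harrington framework with a structural decomposition particular to function-generated graphs. By Theorem \ref{manyfunctions} we may assume $G$ is generated by a $\Delta^1_1$ sequence of $\Delta^1_1$ partial functions. Observe that $G$ is generated by a single function exactly when $G$ admits an orientation $\vec G\subseteq\baire^2$ in which every vertex has out-degree at most $1$ and every edge of $G$ is covered in at least one direction. The out-degree constraint is an independence property and so is handled by Lemma \ref{theft}, but the ``every edge covered'' condition is neither an independence nor a closure property, so Theorem \ref{general} does not apply directly; the structural argument that follows routes around this.

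Two facts about function-generated graphs drive the decomposition: every connected component contains at most one cycle, and every cycle is finite. Let $C=\{x: x\mbox{ lies on a finite cycle of }G\}$. This set is $\Delta^1_1$, since ``there is a cycle of length $n$ through $x$'' is arithmetic in $G$ for each $n\in\omega$. The induced graph $G\res C^2$ is a $\Delta^1_1$ disjoint union of finite cycles, and I would orient each cycle in a canonical $\Delta^1_1$ way---for instance, in the direction obtained by going from the lex-least vertex of the cycle to its lex-least cycle-neighbor. The remaining graph $G\res(\baire\sm C)^2$ is a $\Delta^1_1$ forest. Forest components that are $G$-adjacent to $C$ are ``pendant trees'' and each has a unique root (the sole vertex adjacent to $C$); I orient each such tree toward its root, which is $\Delta^1_1$ definable because in a $\Delta^1_1$ acyclic graph the unique path between two given vertices is arithmetical.

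The final and hardest case is forest components entirely disjoint from $C$---pure acyclic components of $G$ requiring an end selection. For locally countable such components Theorem \ref{selection} applies directly. The main obstacle is the non-locally-countable case, which is the point of the author's comment about needing ``features of acyclic graphs'' to get around Hjorth--Miller's local countability hypothesis. Here the exploit is that in a function-generated acyclic graph, a vertex $x$ of uncountable $G$-degree has at most one out-neighbor in any generating orientation, so all but at most one of $x$'s $G$-neighbors must send their out-edge to $x$. This collapses most of the orientation choice near high-degree vertices and suggests reducing the end-selection to one on a locally countable ``spine'' in each component, identified $\Delta^1_1$ via an application of the general framework to an appropriate property, applying Theorem \ref{selection} on the spine, and propagating the orientation outward in the uniquely determined way. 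Combining the three pieces---cycle orientations on $C$, pendant-tree orientations, and spine-based orientations on pure acyclic components---gives the desired $\Delta^1_1$ function generating $G$.
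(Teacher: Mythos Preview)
Your decomposition into cycle vertices, pendant trees, and pure acyclic components is reasonable and tracks the paper's first reduction (the paper collapses the first two cases together by removing entire cycle-containing components). The genuine gap is in your handling of the pure acyclic components, specifically the non-locally-countable case. Your proposed ``spine'' reduction is not an argument: you do not define the spine, you do not show it is $\Delta^1_1$, you do not show the spine inherits a Borel end selection from $G$, and you do not explain why orienting the spine determines the orientation of the rest of the component. The intuition that high-degree vertices have ``forced'' orientation on all but one edge is correct, but which single edge is outgoing is exactly the datum you are trying to produce, and it is not determined by the graph structure. Theorem \ref{selection} genuinely needs local countability (its proof goes through Theorem \ref{technical variant}, which requires the auxiliary relation to have countable sections), so you cannot simply drop the hypothesis.

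The paper avoids end selection entirely after the acyclic reduction. It introduces a relation on directed edges: $e$ \emph{points to} $x$ if the unique simple path from $e_0$ to $x$ passes through $e_1$; two edges are \emph{inconsistent} if neither points to the other's tail; a set of edges is \emph{consistent} if it contains no inconsistent pair. The key lemma is that $G$ is generated by a single $\Delta^1_1$ function if and only if $G$ can be covered by a $\Delta^1_1$ sequence of sets $A_i$ with both $A_i$ and $-A_i$ consistent (the patching of such a cover into a single function is done by closing each $A_i$ downward under the pointing order and then overwriting layer by layer). Since acyclicity makes simple paths unique and the connectedness relation $\Delta^1_1$, consistency is a bona fide independence property regardless of local countability, and Theorem \ref{general} applies directly. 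This is the ``feature of acyclic graphs'' the paper alludes to; it replaces your end-selection step with a covering problem that fits the general machinery without any cardinality hypothesis on vertex degrees.
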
 
\begin{proof}
For this theorem, the reduction to a covering problem is nontrivial. Any graph generated by a single function is a sidewalk.\footnote{A sidewalk is a graph with at most one cycle in each component. These are sometimes called pseudoforests.} Let \[A=\{x\in \baire: [x]_G\mbox{ contains a cycle}\}.\] Each component contains at most one cycle, so this set is $\Delta^1_1$, and $G\res A$ admits a $\Delta^1_1$ selector. From this, it is straightforward to construct a function which generates $G\res A$. 
Therefore, without loss of generality we may assume that $G$ is acyclic. This means simple paths between points are unique, so the connectedness relation is $\Delta^1_1$.

\begin{dfn}
A (directed) edge $e=(e_0,e_1)$ \textbf{points} to a vertex $x$ if $x$ and $e_0$ are connected and the simple path from $e_0$ to $x$ goes through $e_1$. We write $ePx$

Write $eIe'$ if $e\not=e'$, $e_0,e'_0$ are connected, and neither $eP e'_0$ nor $e'P e_0.$ Write $e\prec e' $ if $e P e'_0$ but not $e'P e_0.$

A set of edges $A$ is \textbf{consistent} if $(\forall e,e'\in A) \;\neg( e I e')$.
\end{dfn}
The simple path from a vertex $x$ to itself contains only $x$. So, if $A$ is consistent, then we cannot have $(x,y)$ and $(x,y')$ both in $A$ unless $y=y'$. That is, any consistent set must be a partial function.
For any $\Delta^1_1$ consistent set $A$, we can take its downward closure under $\prec$ to get a $\Sigma^1_1$ consistent $\prec$-closed set. By the second reflection theorem, any $\Sigma^1_1$ consistent $\prec$-closed set extends to a $\Delta^1_1$ consistent $\prec$-closed set.

\begin{lem}
$G=\bigcup_i ( A_i\cup (- A_i))$ where $A_i$ is a $\Delta^1_1$ sequence of consistent sets if and only if $G$ is generated by a $\Delta^1_1$ function.
\end{lem}
\begin{proof}[Proof of lemma]
Suppose $f$ generates. We want to show that $f\cap G$ is consistent. Otherwise, there are some $e,e'\in f$ so that $eIe'$. If there are $n$ undirected edges on the simple path between $e_0$ and $e'_0$, there are $n-1$ vertices on the path besides $e_0$ and $e'_0.$ Label these vertices $v_1,...,v_{n-1}$. Since $f(e_0)$ and $f(e'_0)$ point off of the path and $f$ generates $G$, there can only be the $n-1$ edges on the path of the form $\{v_i, f(v_i)\}$. This is a contradiction. So if $f$ generates, then $G\subseteq f\cup (-f)$ gives a cover.

 For the other direction we may assume each $A_i$ is $\prec$-closed. Note that if $X,Y$ are $\prec$-closed and consistent, then so is $Y\cup (X\sm (- Y))$. Define $f_i$ inductively by $f_{i+1}=f_{i}\cup (A_{i+1}\sm (-f_i) ),$ and set $f=\bigcup_i f_i$. Then $f$ is a partial function so that $G=f\cup (-f)$, and $G$ is generated by a single function.
\end{proof}

So, $G$ is generated by a (Borel) $\Delta^1_1$ function if and only if $G$ is covered by (Borel) $\Delta^1_1$ sets $A$ such that $A$ is consistent and $-A$ is consistent.
\end{proof}
\begin{cor}
The set of graphs generated by a Borel function is $\Pi^1_1$ in the codes.
\end{cor}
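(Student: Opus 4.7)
The plan is to follow the template established by the two previous corollaries in the excerpt, leveraging Theorem \ref{1function} to convert a second-order quantifier over Borel functions into a first-order quantifier over indices of $\Delta^1_1(g)$-codes. In detail, for a real $g$, I would show that $g$ codes a graph $G$ generated by a Borel function if and only if $g$ codes a graph and there exists $f \in \Delta^1_1(g)$ generating $G$. The forward direction is exactly the content of a relativized version of Theorem \ref{1function}; the reverse direction is immediate since every $\Delta^1_1(g)$ function is Borel. Since relativization of the proof of Theorem \ref{1function} requires only that all the ingredients (the reflectability of the covering property, the treatment of the cyclic part, the closure-plus-independence argument) go through uniformly with $g$ as a parameter, this step should be routine.

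Next I would unpack the existential quantifier over $\Delta^1_1(g)$ functions. As in the first corollary of the excerpt, the clause $(\exists f \in \Delta^1_1(g))\; f\text{ generates }B_g$ unfolds to
\[(\exists n \in \omega)\; \phi^g_n \text{ is a Borel code, and } B_{\phi^g_n} \text{ is a function that generates } B_g.\]
The predicate ``$\phi^g_n$ is a Borel code'' is $\Pi^1_1$ in $g$, and the predicates ``$B_{\phi^g_n}$ is a function'' and ``$B_{\phi^g_n}$ generates $B_g$'' can each be written as universally quantified arithmetic statements about membership in the Borel sets coded by $g$ and $\phi^g_n$, which is $\Pi^1_1$. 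The existential over $n \in \omega$ preserves $\Pi^1_1$, and conjoining with the $\Pi^1_1$ condition ``$g$ codes a graph'' yields a $\Pi^1_1$ description of the target set.

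I do not expect any real obstacle: the only nontrivial ingredient is Theorem \ref{1function} itself, which is already in hand, and the rest is a mechanical translation following the two earlier corollary proofs. The one point that deserves a brief sanity check is that the relativization of Theorem \ref{1function} is uniform in $g$ (so that a single existential quantifier over $n$ suffices for all $g$); this is automatic because the proof uses only Gandy--Harrington forcing and reflection, both of which relativize.
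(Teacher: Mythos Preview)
Your proposal is correct and follows exactly the template the paper establishes (and explicitly flags) after its first corollary: relativize Theorem \ref{1function} to the code $g$, then rewrite $(\exists f\in\Delta^1_1(g))$ as a number quantifier over oracle-machine indices and check that the inner clauses are $\Pi^1_1$. The paper does not even spell out a separate proof for this corollary, precisely because it is this routine application of the earlier pattern.
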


We can also prove an effectivization result for end selection in locally countable graphs using an argument of Hjorth and Miller \cite{HM}. Note, their dichotomy was proven without effective methods and only gives a complexity upper bound of $\mathbf{\Delta}^1_2$ for end selection.

\begin{thm} \label{selection}
If $G$ is a locally countable $\Delta^1_1$ graph which admits a Borel end selection, then $G$ admits a $\Delta^1_1$ end selection
\end{thm}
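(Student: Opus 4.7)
The plan is to follow the template of Theorem \ref{1function}: reduce end selection to a covering problem for a reflectable property, then apply Theorem \ref{general}. Since $G$ is locally countable, I would encode a Borel end selection of $G$ as a Borel (partial) function $f \colon \baire \to \baire$ with $(x, f(x)) \in G$ whose iterates from any vertex produce a ray representing the chosen end of $[x]_G$. The task then becomes to show that $G$ admits such a Borel $f$ iff $G$ can be covered by countably many Borel ``end-pointer'' sets, each serving as the graph of a partial $f$ of this form.

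Mirroring the setup of Theorem \ref{1function}, I would define a set $A \subseteq G$ to be \emph{end-consistent} if (i) $A$ is a partial function in the first coordinate and (ii) no two edges $e, e' \in A$ stand in a conflict relation $I$, meaning their pointing directions are coherent with a common end. Following Hjorth--Miller, one works relative to a $\Delta^1_1$ spanning subforest $T$ of $G$ (obtained by first applying Theorem \ref{effectivelnfm} to an edge decomposition of $G$ into $\Delta^1_1$ partial functions, then peeling off cyclic pieces as in the opening move of Theorem \ref{1function}), so that the conflict relation $I$ depends only on the finite simple path in $T$ between the tails of two edges and is a $\Delta^1_1$ relation on finite tuples. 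End-consistency is then a conjunction of an independence property (``$A$ is a function and no two edges of $A$ are $I$-conflicting'') with a closure property (closure under the forced pointer propagation along $T$, akin to $\prec$-closure in Theorem \ref{1function}), hence reflectable by Lemma \ref{theft}. Given a $\Delta^1_1$ covering by end-consistent sets, the greedy assembly $f_{i+1} = f_i \cup (A_{i+1} \setminus (-f_i))$ from the end of Theorem \ref{1function} glues them into a single $\Delta^1_1$ end-selecting function.

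The main obstacle is securing the ambient $\Delta^1_1$ spanning subforest $T$ on which the conflict relation becomes local, and verifying that a Borel end selection for $G$ pulls back to a Borel end-consistent cover along $T$. In the acyclic sidewalk setting of Theorem \ref{1function}, no such scaffolding was needed because simple paths in $G$ were already $\Delta^1_1$ and unique; in the general locally countable case, one must both produce $T$ effectively and check that ends of $G$ correspond, in the relevant sense, to coherent choices of outward edges along $T$. Once this scaffolding is in place, end-consistency slots into the independence/closure framework of Lemma \ref{theft}, and the rest of the proof is a faithful adaptation of Theorem \ref{1function} together with a final application of Theorem \ref{general} to pass from a Borel end-consistent cover to a $\Delta^1_1$ one.
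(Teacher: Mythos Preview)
Your approach diverges from the paper's and has a real gap. The paper does not attempt to mimic Theorem \ref{1function}. Instead it quotes Hjorth and Miller's own reduction: a locally countable Borel graph $G$ admits a Borel end selection iff $[G]^{<\infty}$ is covered by sets of the form $\pi_1(\mathcal D)$, where $\mathcal D\subseteq [G]^{<\infty}\times\baire$ is \emph{directed}, meaning $(S,x)\in\mathcal D$ implies $x\in[S]_G\sm S$, and for $(S,x),(S',x')\in\mathcal D$ in the same component, $x$ and $x'$ are connected in $G\sm S$ or in $G\sm S'$. Directedness is visibly an independence property (the defining $\phi$ involves only finite sets and connectivity in $G$, which is $\Delta^1_1$ by local countability), and the relation $R=\{(S,x):x\in[S]_G\}$ has countable sections. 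So the paper invokes Theorem \ref{technical variant}, not Theorem \ref{general}, and is done in a few lines.

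Your route through a spanning subforest $T$ runs into trouble at exactly the point you flag as the ``main obstacle,'' and I do not think it can be repaired. First, the reduction you sketch for producing a $\Delta^1_1$ spanning forest (Feldman--Moore plus ``peeling off cyclic pieces'') is the argument from Theorem \ref{1function}, which worked only because sidewalks have at most one cycle per component; a general locally countable graph has no such structure. Second, and more seriously, ends of $G$ do not correspond to ends of $T$: a single end of $G$ can refine into infinitely many ends of $T$, so a conflict relation $I$ defined via simple paths in $T$ will declare edges ``conflicting'' even when they point to the same $G$-end. Your end-consistent sets would then fail to cover a Borel end selection for $G$. Finally, the greedy gluing $f_{i+1}=f_i\cup(A_{i+1}\sm(-f_i))$ in Theorem \ref{1function} worked because $\prec$ was a genuine partial order on edges of an acyclic graph, making $\prec$-closed consistent sets stable under this operation; there is no analogous order on edges of a general $G$ that would make the glued function land in a single end per component. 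The Hjorth--Miller encoding via $[G]^{<\infty}$ is precisely what replaces this missing structure.
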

\begin{proof}
Since $G$ is locally countable, $E_G$ is $\Delta^1_1$. Let \[R=\{(S,x): S\in [G]^{<\infty}, x\in [S]_G\}.\] Note that $R$ has countable sections over $[G]^{<\infty}$. Following \cite{HM}, $\s D\subseteq [G]^{<\infty}\times \baire$ is directed if
\begin{enumerate}
    \item $\s D\subseteq R$
    \item For all $(S,x)\in \s D$, $x\not\in S$
    \item If $(S,x), (S', x')\in \s D$ and $S$ and $S'$ are in the same $G$-component, then and $x',x$ are connected in at least one of $G\sm S$ or $G\sm S'$. 
\end{enumerate} %double check

Note that directedness is an independence property. And, \cite[Proposition 3.4]{HM} says a locally countable Borel graph $G$ admits a Borel end selection if and only $[G]^{<\infty}$ can be covered by projections of Borel directed sets. It is straightforward to check that one can replace Borel with $\Delta^1_1$ in their proof. We can then apply Theorem \ref{technical variant}.
\end{proof}

The theorem above implies that any $\Delta^1_1$ Schreier graph of a free Borel action of $\Z$ is the Schreier graph of a free $\Delta^1_1$ action. Or, in the language of \cite{directable forests}, the set of directable forests of lines is $\Pi^1_1$ in the codes. We can generalize this to effectivize Borel actions of $\Z^2$.

\begin{thm}\label{actions} If $G$ is a $\Delta^1_1$ graph and the Schreier graph of free Borel action of $\Z^2$ with the usual generating set, then $G$ is the Schreier graph of a free $\Delta^1_1$ action of $\Z^2.$
\end{thm}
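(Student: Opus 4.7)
The plan is to reduce to a labeling problem on the vertex set and apply Theorem~\ref{technical variant}. Specifying a free $\Z^2$-action generating $G$ (with the standard generators $e_1,e_2$) is equivalent to specifying, at each vertex $v$, a bijection from its four $G$-neighbors to $\{+e_1,-e_1,+e_2,-e_2\}$, satisfying: (a) edge-reversal consistency, i.e. if $\ell(v)$ sends the neighbor $u$ to $\epsilon$, then $\ell(u)$ sends $v$ to $-\epsilon$; and (b) commutativity around 4-cycles, i.e.\ the two paths of length 2 from $v$ to $v+e_1+e_2$ close up. Using Theorem~\ref{effectivelnfm}, I would fix a $\Delta^1_1$ enumeration $v\mapsto(n_0(v),\dots,n_3(v))$ of $N_G(v)$, so that the space $S$ of such bijections is a finite set independent of $v$, and the whole labeling becomes a function $\ell:\baire\to S$.

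Next, I would set $R=\baire\times S\subseteq \baire^2$ (identifying $S$ with a finite initial segment of $\omega$), which is $\Delta^1_1$ with finite sections. Define $\Phi(A)$ for $A\subseteq R$ as the conjunction of: (i) $A$ is the graph of a partial function; (ii) condition (a) holds for all pairs in $A$; (iii) condition (b) holds for all 4-cycles contained in $\pi_1(A)$; and (iv) $A$ is closed under the $\Delta^1_1$ relation $\psi((v,s),y,z)$ asserting that $y\in N_G(v)$ and $z=(y,t)$, where $t$ is the unique label at $y$ forced by $s$ together with the local $G$-structure (namely, $t$ sends $v$ to $-s(v)$, and sends the remaining three neighbors of $y$ as dictated by the grid-pattern identification using common $G$-neighbors). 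Conditions (i)--(iii) are independence properties and (iv) is a closure property, so by Lemma~\ref{theft}, $\Phi$ is reflectable.

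I would then apply Theorem~\ref{technical variant} with $X=\baire$. The Borel $\Z^2$-action yields a single-piece Borel cover $\baire=\pi_1(A^*)$ where $A^*$ is the graph of the corresponding Borel labeling, and $\Phi(A^*)$ holds. The theorem produces $\Delta^1_1$ sets $A_i$ with $\Phi(A_i)$ and $\baire=\bigcup_i\pi_1(A_i)$. By (iv), each $\pi_1(A_i)$ is $G$-component-closed. Setting $i(v):=\min\{i:v\in\pi_1(A_i)\}$ and $\ell(v):=A_{i(v)}(v)$ gives a $\Delta^1_1$ function, and since $i$ is constant on $G$-components, neighboring vertices draw their labels from the same $A_i$; local consistency and commutativity of $A_{i(v)}$ then propagate to give $\ell$ satisfying (a) and (b) globally. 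From $\ell$ I would extract $\Delta^1_1$ bijections $f_1,f_2$ by $f_k(v):=n_j(v)$ where $\ell(v)(j)=+e_k$; these commute and generate $G$, and freeness is automatic since the Borel action was free so each $G$-component is $\Z^2$.

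The main technical obstacle will be the precise formulation of the closure property (iv): I need to verify that the deterministic local extension $\psi$ really is $\Delta^1_1$, that the $\psi$-closure of a valid partial labeling remains a valid partial labeling (no conflicts introduced), and that iterating $\psi$ saturates each $G$-component. This rests on the fact that in a $\Z^2$-grid the labeling at one vertex together with the $G$-structure on its 2-neighborhood uniquely determines the labeling at each neighbor, via identifying the diagonal vertex $v+e_1+e_2$ as the common $G$-neighbor of $v+e_1$ and $v+e_2$, and this identification is $\Delta^1_1$ from $G$.
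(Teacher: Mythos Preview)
Your approach is correct and takes a genuinely different route from the paper's. The paper works with an \emph{edge} labeling: it introduces $\Delta^1_1$ relations of parallelism and perpendicularity between directed edges, defines a \emph{partial diagram} as a partial map $G\to\{a,b,-a,-b\}$ respecting these relations, and observes that this is a pure independence property. The $\Delta^1_1$ partial diagrams produced by the general machinery then need not be saturated on components, so the paper gives an explicit four-case rule (again using the parallel/perpendicular structure) to merge two partial diagrams into one, iterating to a total Cayley diagram. You instead build component-saturation into $\Phi$ via the closure clause (iv), which makes the final gluing trivial (least index) at the cost of verifying that the one-step propagation $\psi$ is well-defined and $\Delta^1_1$; it is, precisely by the common-neighbor identification you describe. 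In effect you have relocated the geometric content of the paper's patching step into the closure rule. One point to tighten: Theorem~\ref{technical variant} as stated only promises a cover of $X$ by $\Delta^1_1$ sets of the form $\pi_1(A)$, not a $\Delta^1_1$ \emph{sequence} of the witnessing $A$'s, which your least-index selector needs; since your fibers are finite this is easy to recover by running the argument of Theorem~\ref{general} directly on $\baire\times S$ and then applying $\Pi^1_1$-on-$\Pi^1_1$ reflection to the set of $\Delta^1_1$ codes $c$ with $\Phi(A_c)$.
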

\begin{proof} All components of $G$ isomorphic the square lattice. In particular $G$ is locally countable.

\begin{dfn}
A \textbf{straight line} is a directed path so that no other path of the same or shorter length has the same endpoints. A \textbf{rectangle} is a simple directed cycle that can be divided into 4 straight lines.

Two directed edges $e,e'$ are \textbf{parallel} if they lie on some straight line together or if $(- e)$ and $e'$ are on opposite sides of a rectangle. We write $e \| e'$. We say $e,e'$ are \textbf{anti-parallel} if $(- e) \| e$. And we say $e,e'$ are \textbf{perpendicular} if they are in the same component and are neither parallel nor anti parallel. We write $e\perp e'.$ \end{dfn}

Note that all of these relations are $\Delta^1_1.$

\begin{dfn} Let $\{a,b,-a,-b\}$ be the usual generating set for $\Z^2$. For a graph $G$, say that a partial function $f:G \rightarrow\{a, b, -a, -b\}$ is a partial diagram if for all $e, e'$, 
\begin{enumerate}
    \item if $e\|e'$, then $f(e)=f(e')$
    \item if $e\|(- e')$ then $f(e)=-f(e')$
    \item if $e\perp e'$ then $f(e)$ and $f(e')$ have different letters (i.e.~$f(e)\not=\pm f(e')$).
\end{enumerate}

A Cayley diagram is a partial diagram whose domain is all of $G$.
\end{dfn}

 We first check that being induced by a free $\Z^2$ action is equivalent to having Cayley diagram. If $a:\Z^2\curvearrowright\baire$ is an action generating $G$, then any straight line is of the form $\{n\gamma \cdot x: n\in\Z\}$ for some generator $\gamma$ of $\Z^2$. It follows that  $f(x,\gamma\cdot x)=\gamma$ defines a Cayley diagram. Conversely, given a Cayley diagram $f$, we can define an action of a generator $c$ by $c\cdot x=y\lra f(x,y)=c$. To see this extends to an action of $\Z^2$, note that for any $x,y$ $f(x,y)=-f(y,x)$ so $(\gamma -\gamma)\cdot x=x$, and $(x, a\cdot x, (b+a)\cdot x,(-a+b+a)\cdot x,(-b-a+b+a)\cdot x)$ must be a rectangle, so $x=(-b-a+b+a)\cdot x.$

Since being a partial diagram is an independence property, it suffices to show that any sequence of partial diagrams whose domains cover $G$ can be patched together into a Cayley diagram.

Suppose we have two $\Delta^1_1$ partial diagrams $f$ and $g$. We show how to modify and glue them together to get a partial diagram whose domain is the union of their domains. The result then follows by induction. Define $h$ by
\begin{enumerate}
\item  if $e\in \dom(f)$, then $h(e)=f(e)$
\item if $e\in \dom(g)\sm [\dom(f)]_G$, $h(e)=g(e)$
\item else if $e\in \dom(g)$ and $e\|e'$ for some $e'\in f$ (or $e\|(- e')$), then $h(e)=f(e')$ (or $-f(e')$).
\item else if $e\perp e'\in \dom(f)$, $h(e)$ has the sign of $g(e)$ and the opposite letter of $f(e')$.
\end{enumerate} It is straightforward to check that $h$ is a partial diagram. 
\end{proof}
\begin{cor}
The set of Schreier graphs of Borel $\Z^2$ actions is $\Pi^1_1$ in the codes.
\end{cor}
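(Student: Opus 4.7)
The plan is to argue exactly as in the corollaries that follow Theorems~\ref{smooth}, \ref{manyfunctions}, and \ref{1function}. By relativization and Theorem~\ref{actions}, a real $g$ codes a Schreier graph of a free Borel $\Z^2$-action (with the standard generating set) if and only if $g$ codes a graph $G$ and there exists a $\Delta^1_1(g)$ free action $a\colon \Z^2 \curvearrowright \baire$ whose Schreier graph equals $G$. So the goal is to rewrite this condition as a $\Pi^1_1$ formula in $g$.

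Following the template from the earlier corollary proofs, I would replace the existential $\Delta^1_1(g)$-quantifier with an existential over $\omega$: there exists $n \in \omega$ such that $\phi_n^g$ is a Borel code and $B_{\phi_n^g}$ is the graph of a free action of $\Z^2$ on $\baire$ whose Schreier graph (under the standard generators) is $G$. The clauses ``$g$ codes a graph'' and ``$\phi_n^g$ is a Borel code'' are both standard $\Pi^1_1$ conditions. The remaining clause unpacks into: $B_{\phi_n^g}$ is a total function on $\Z^2 \times \baire$; it satisfies the group-action identities $(\gamma_1 + \gamma_2) \cdot x = \gamma_1 \cdot (\gamma_2 \cdot x)$ and $0 \cdot x = x$; it acts freely, i.e.\ $\gamma \cdot x \neq x$ for all $x$ and all nonzero $\gamma \in \Z^2$; and for all $x, y \in \baire$, $(x,y) \in G$ if and only if $y = \gamma \cdot x$ for some standard generator $\gamma$. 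Each of these is a universal statement over $\baire$ (and countable index sets) with a $\Delta^1_1$ matrix, hence $\Pi^1_1$ in $g$.

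Assembling, the defining formula is the conjunction of a $\Pi^1_1$ statement with a countable disjunction over $n \in \omega$ of $\Pi^1_1$ statements, and since $\Pi^1_1$ is closed under countable unions and intersections, the result is $\Pi^1_1$ in the codes. I do not anticipate any serious obstacle; the argument is entirely formal bookkeeping once Theorem~\ref{actions} is available. The only step that deserves a moment of care is verifying that each subclause of ``graph of a free $\Z^2$-action whose Schreier graph is $G$'' is genuinely $\Pi^1_1$ uniformly in the Borel code, but this is routine because the witnesses to failure of any of those clauses are existentials over $\baire$ and $\Z^2$ applied to arithmetic predicates on the code.
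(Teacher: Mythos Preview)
Your proposal is correct and follows exactly the template the paper establishes in the earlier corollaries (e.g., after Theorem~2.1 and Theorem~\ref{manyfunctions}); the paper itself states this corollary without proof, relying on that same pattern. The only detail worth double-checking is that the biconditional ``$(x,y)\in G$ iff $y=\gamma\cdot x$ for some standard generator $\gamma$'' is $\Pi^1_1$, and indeed it is since the generating set is finite, so the existential becomes a finite disjunction.
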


 A similar idea works for $\Z^n$. We can also effectivize a recent result of Miller which we will use in the next section. 

\begin{thm}[c.f. {\cite[Theorem 1]{millers new thing}}] \label{arrays}
If $\ip{G_{i,j}: i,j\in\omega}$ is a $\Delta^1_1$ array of graphs, and there are Borel sets $B_i$ such that $\baire=\bigcup_i B_i$ and $G_{i,j}\res B_i$ has a Borel countable coloring for every $j$, then there are $\Delta^1_1$ sets $A_i$ so that $G_{i,j}\res B_i$ has a $\Delta^1_1$ countable coloring for all $j$.
\end{thm}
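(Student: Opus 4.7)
The plan is to unpack ``admits a Borel countable coloring'' as ``can be covered by countably many Borel independent sets'' and then invoke Theorem \ref{silly generalization} directly. First, for each pair $(i,j)$, a Borel countable coloring $c_{i,j}:B_i\to\omega$ of $G_{i,j}\res B_i$ yields Borel $G_{i,j}$-independent sets $C_{i,j,k}=c_{i,j}^{-1}(k)$ whose union contains $B_i$. Since $\baire=\bigcup_i B_i$, this rearranges to
\[\baire=\bigcup_i\bigcap_j\bigcup_k C_{i,j,k},\]
where the inclusion $B_i\subseteq \bigcap_j\bigcup_k C_{i,j,k}$ holds because $\bigcup_k C_{i,j,k}\supseteq B_i$ for every $j$.

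Next, let $\Phi_{i,j}(A)$ be the property ``$A$ is $G_{i,j}$-independent''. Because $\ip{G_{i,j}:i,j\in\omega}$ is a $\Delta^1_1$ array, the sequence $\ip{\Phi_{i,j}:i,j\in\omega}$ is a $\Delta^1_1$ sequence of independence properties (each $\Phi_{i,j}$ is witnessed by the formula $\phi_{i,j}(x,y):= (x,y)\in G_{i,j}$, relativized to $i,j$). By Lemma \ref{theft}, each $\Phi_{i,j}$ is reflectable. We have thus produced a Borel family $\ip{C_{i,j,k}}$ of sets with $\Phi_{i,j}(C_{i,j,k})$ whose unions-of-intersections-of-unions cover the $\Delta^1_1$ set $\baire$, which is exactly the input to Theorem \ref{silly generalization}.

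Applying Theorem \ref{silly generalization} produces a $\Delta^1_1$ family $\ip{C^*_{i,j,k}:i,j,k\in\omega}$ of $\Delta^1_1$ sets with $C^*_{i,j,k}$ being $G_{i,j}$-independent and $\baire=\bigcup_i\bigcap_j\bigcup_k C^*_{i,j,k}$. Setting
\[A_i=\bigcap_j\bigcup_k C^*_{i,j,k},\]
gives $\Delta^1_1$ sets covering $\baire$. For each fixed $i,j$, the $\Delta^1_1$ sequence $\ip{C^*_{i,j,k}:k\in\omega}$ covers $A_i$ by $G_{i,j}$-independent sets, so the map $x\mapsto \min\{k:x\in C^*_{i,j,k}\}$ is a $\Delta^1_1$ countable coloring of $G_{i,j}\res A_i$.

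There is no real obstacle here: the only thing to check is that $\Phi_{i,j}$ genuinely fits our independence-property framework in a $\Delta^1_1$-uniform way over the index set, which follows because $G_{i,j}$ is a $\Delta^1_1$ array. All the effectivization work has been localized into Theorem \ref{silly generalization}, and extracting the $A_i$ and their colorings from the $C^*_{i,j,k}$ is a formality.
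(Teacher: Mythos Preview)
Your proposal is correct and follows essentially the same approach as the paper: both reduce the statement to covering $\baire$ by sets of the form $\bigcup_i\bigcap_j\bigcup_k A_{i,j,k}$ with each $A_{i,j,k}$ being $G_{i,j}$-independent, and then invoke Theorem \ref{silly generalization}. Your write-up is simply a more detailed unpacking of the paper's two-sentence proof, including the explicit recovery of the sets $A_i$ and their colorings from the effectivized family.
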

\begin{proof}
This is equivalent to effectivizing sequences $\ip{A_{ijk}: i,j,k\in\omega}$ of sets so that $A_{ijk}$ is $G_{ij}$-independent and so that for all $x$ there is an $i$ so that for all $j$ there is a $k$ so that $x\in A_{ijk}$. The result then follows from Theorem \ref{silly generalization}.
\end{proof}
\begin{cor}
The set of arrays of Borel graphs $G_{i,j}$ so that there are Borel sets $B_i$ with $\chi_B(G_{i,j}\res B_i)\leq \aleph_0$ for all $i,j$ and $\baire=\bigcup_i B_i$ is $\Pi^1_1$ in the codes.
\end{cor}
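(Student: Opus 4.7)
The plan is to convert the hypothesis into precisely the nested union--intersection--union structure handled by Theorem \ref{silly generalization}, run that theorem, and then reassemble the output into the form of $\Delta^1_1$ sets $A_i$ with $\Delta^1_1$ colorings. First I would unpack ``$G_{i,j}\res B_i$ admits a Borel countable coloring'' by looking at the fibers of such a coloring: they give a Borel family $\ip{A_{i,j,k}: k\in\omega}$ of $G_{i,j}$-independent Borel sets whose union contains $B_i$. Since $\baire=\bigcup_i B_i$, for every $x\in\baire$ there exists $i$ with $x\in B_i$, hence for every $j$ there exists $k$ with $x\in A_{i,j,k}$. That is,
\[\baire=\bigcup_i\bigcap_j\bigcup_k A_{i,j,k}.\]

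Next I would check the hypotheses of Theorem \ref{silly generalization}. The property $\Phi_{i,j}(A):\equiv$ ``$A$ is $G_{i,j}$-independent'' is an independence property in the sense of Section \ref{general section} (it is $\neg(\exists x,y\in A)\,(x,y)\in G_{i,j}$), hence reflectable by Lemma \ref{theft}(3). Because the array $\ip{G_{i,j}: i,j\in\omega}$ is $\Delta^1_1$, the sequence $\ip{\Phi_{i,j}: i,j\in\omega}$ is a $\Delta^1_1$ sequence of reflectable properties, so Theorem \ref{silly generalization} applies with $\widetilde X=\baire$ and produces a $\Delta^1_1$ family $\ip{A'_{i,j,k}: i,j,k\in\omega}$ of $\Delta^1_1$ sets with $A'_{i,j,k}$ still $G_{i,j}$-independent and
\[\baire=\bigcup_i\bigcap_j\bigcup_k A'_{i,j,k}.\]

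Finally, I would define $A_i:=\bigcap_j\bigcup_k A'_{i,j,k}$. This is a $\Delta^1_1$ sequence of $\Delta^1_1$ sets covering $\baire$, and for each fixed $i,j$ the $\Delta^1_1$ sequence $\ip{A'_{i,j,k}: k\in\omega}$ consists of $G_{i,j}$-independent sets whose union contains $A_i$; the $\Delta^1_1$ countable coloring of $G_{i,j}\res A_i$ sending $x$ to the least $k$ with $x\in A'_{i,j,k}$ then does the job. There is no real obstacle here beyond bookkeeping: the only nontrivial input is the ``alternating'' effectivization Theorem \ref{silly generalization}, which was designed precisely to handle this $\bigcup_i\bigcap_j\bigcup_k$ shape, and the rest is a routine translation between Borel colorings of $G_{i,j}$ and Borel covers by $G_{i,j}$-independent sets.
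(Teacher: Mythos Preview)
Your argument is a correct and essentially verbatim reconstruction of the paper's proof of the \emph{preceding} Theorem~\ref{arrays}: you translate ``$B_i$ covers and $G_{i,j}\res B_i$ is countably Borel colorable'' into the $\bigcup_i\bigcap_j\bigcup_k$ shape with $\Phi_{i,j}(A)=$ ``$A$ is $G_{i,j}$-independent,'' invoke Theorem~\ref{silly generalization}, and reassemble. That is exactly what the paper does, in two sentences.

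However, the statement you were asked to prove is the \emph{Corollary}, whose conclusion is a complexity bound (``$\Pi^1_1$ in the codes''), not an effectivization statement. Your write-up assumes the array $\ip{G_{i,j}}$ is $\Delta^1_1$ and produces $\Delta^1_1$ witnesses; it never passes from this to the assertion that the set of codes for arbitrary Borel arrays admitting such Borel $B_i$ is $\Pi^1_1$. The paper gives no separate proof of this Corollary because it follows the template spelled out once in the proof of Corollary~2.2: relativize Theorem~\ref{arrays} to an arbitrary code $g$ for the array, so that Borel witnesses yield $\Delta^1_1(g)$ witnesses, and then observe that ``$(\exists A,c\in\Delta^1_1(g))$ $A$ covers $\baire$ and $c$ colors each $G_{i,j}\res A_i$'' unwinds to a $\Pi^1_1$ condition on $g$ (the existential over $\Delta^1_1(g)$ becomes a number quantifier over indices of $g$-oracle machines). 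You should add this final sentence; without it you have reproved the Theorem but not the Corollary.
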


Finally, we can effectivize Miller's dichotomy for local colorings of graphs equipped with quasi-orders \cite[Theorem 5.1.2]{miller survey}.

\begin{thm} \label{goho}
If $(G,R)$ is a $\Delta^1_1$ graph and quasi-order on $\baire$ so that $R$ admits a Borel homomorphism, $f$, to some $\leq_{lex}^\alpha$ with $\alpha<\omega_1$ with $G\cap (\equiv_f)$ countably Borel 
colorable, then $(G,R)$ admits a $\Delta^1_1$ such homomorphism into some $\leq_{lex}^\alpha$ with $\alpha<\omega_1^{CK}$.
\end{thm}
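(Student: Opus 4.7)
The plan is to apply Theorem \ref{transfinite} with a refinement property $\Phi$ that simultaneously encodes a homomorphism into a lex order and a countable coloring of the associated graph. Specifically, I would set things up so that a sequence $\overline A = \ip{A_{\beta,i}: (\beta,i)\in \alpha'\times\omega}$ satisfying $\Phi$ and with $X_{\overline A} = \emptyset$ gives rise to (i) a $\Delta^1_1$ homomorphism $f_{\overline A}(x)(\beta) = \chi_{A_{\beta,0}}(x)$ from $R$ into $\leq_{lex}^{\alpha'}$, and (ii) a $\Delta^1_1$ countable coloring of $G\cap (\equiv_{f_{\overline A}})$ whose color classes at level $\beta$ are the $A_{\beta,i}$ for $i\geq 1$. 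Replacing the hypothetical Borel witness $f_0$ by $1\fr f_0$, I may assume $f_0(x)\neq \vec 0$ for all $x$, so that the level sets $A_{\beta,0} = \{x : f_{\overline A}(x)\res\beta = \vec 0 \text{ and } f_{\overline A}(x)(\beta)=1\}$ partition $\baire$.

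The conditions to encode in $\Phi$ are then: the \emph{homomorphism} condition, that each $A_{\beta,0}$ is $R$-upward-closed within $\baire_{(\overline A_0\res\beta)}$; and the \emph{coloring} condition, that each $A_{\beta,i}$ with $i\geq 1$ is $G$-independent on $\baire_{(\overline A_0\res\beta)}$, and that together the $A_{\beta,i}$ for $i\geq 1$ cover that region. The Borel hypothesis, together with a Borel countable coloring of $G\cap (\equiv_{f_0})$, provides a Borel sequence $\overline B$ of length $\omega\cdot\alpha$ with $\alpha<\omega_1$, satisfying $X_{\overline B} = \emptyset$ and $\Phi(\overline B)$. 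Theorem \ref{transfinite} then hands back a $\Delta^1_1$ sequence $\overline A$ of length $\omega \cdot \alpha'$ with $\alpha' < \omega_1^{CK}$, from which the desired $\Delta^1_1$ homomorphism into $\leq_{lex}^{\alpha'}$ and $\Delta^1_1$ countable coloring of $G\cap (\equiv_f)$ are read off.

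The main obstacle is fitting the homomorphism condition into the formal refinement-property framework: the requirement that $A_{\beta,0}$ be $R$-upward-closed within $\baire_{(\overline A_0\res\beta)}$ depends on the earlier level sets $A_{\gamma,0}$, so it is not a closure property of $A_{\beta,0}$ alone. My plan is to encode the forbidden pattern $x_1 \in A_{\beta,0},\, x_1 R x_2,\, x_2 \in \baire_{(\overline A_0\res\beta)} \setminus A_{\beta,0}$ as an independence-style $\phi_0$, using the free variable $y$ in the refinement-property definition to range over such offending $x_2$ and exploiting the automatic disjointness of $A_{\beta,0}$ from all earlier level sets. Once this encoding is properly set up, the rest of the argument proceeds on the transfinite template of Theorems \ref{linearization} and \ref{transfinite}: either the construction closes up via reflection and concatenation, or the forcing argument below the residual $\Sigma^1_1$ set produces a new level extending the sequence, contradicting the minimality of the chosen obstruction.
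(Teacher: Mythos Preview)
Your plan to invoke Theorem~\ref{transfinite} is the paper's plan, and you are right that the crux is fitting the homomorphism condition into the refinement-property template. But your proposed workaround does not go through. In the template
\[(\forall\,\mathbf{x}\in A_{\beta,0}^k\cap \baire_{(\overline A_0\res\beta)}^k)(\forall y)\ \phi_0(\mathbf{x},y),\]
the predicate $\phi_0$ is a fixed $\Delta^1_1$ formula, and only the bound tuple $\mathbf{x}$ is restricted to $A_{\beta,0}$ and to $\baire_{(\overline A_0\res\beta)}$; the parameter $y$ ranges freely over $\baire$. So if $y$ is to play the role of the offending $x_2$, there is no way inside $\phi_0$ to express either ``$y\in\baire_{(\overline A_0\res\beta)}$'' or ``$y\notin A_{\beta,0}$''. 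The automatic disjointness you invoke only gives $A_{\beta,0}\subseteq \baire_{(\overline A_0\res\beta)}$, which handles the constraint on $x_1$, not on $x_2$. The global closure slot $\Psi_0$ cannot help either: unrelativized $R$-upward-closure fails for your $A_{\beta,0}$, since $x\in A_{\beta,0}$ and $xRy$ allow $y\in A_{\gamma,0}$ for some $\gamma<\beta$.

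The paper sidesteps this by passing to $\baire^2$ and covering $X=G$ (the edge set) rather than covering $\baire$. The homomorphism data is carried by a box $R_\alpha=A_\alpha\times B_\alpha$ with the one-point independence condition $(R_\alpha\cap R)\setminus\bigcup_{\gamma<\alpha}R_\gamma=\emptyset$ together with the closure property ``is a box''; the coloring data is carried by a second box $S_\alpha=C_\alpha\times D_\alpha$ with $C_\alpha\cap D_\alpha=\emptyset$ and a $G$-independence condition on $C_\alpha$ modulo earlier $R_\gamma$'s. This is the same move as in Theorems~\ref{smooth} and~\ref{linearization}: the statement ``$A$ is upward closed under a relation'' is traded for ``$A\times B$ avoids the relation'', which \emph{is} an independence property on a single element of $\baire^2$. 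Upward closure of $A_\alpha$ is then recovered only \emph{after} Theorem~\ref{transfinite} has produced the $\Delta^1_1$ sequence, by taking the upward closure under $R\setminus\bigcup_{\gamma<\alpha}R_\gamma$ and invoking second reflection, exactly as in the proof of Theorem~\ref{linearization}. So the fix is not to squeeze closure into $\phi_0$, but to change the ambient space so that closure is unnecessary at the refinement stage and can be imposed afterwards.
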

\begin{proof}
First we show that such a homomorphism from $R$ is equivalent to a sequence of pairs of sets $\ip{(R_\alpha, S_\alpha):\alpha<\beta}$ in $\baire^2$ with
\begin{enumerate}
    \item Each $R_\alpha$ is a box $A_\alpha\times B_\alpha$, and each $S_\alpha$ is a box $C_\alpha\times D_\alpha$ 
    \item $\left(R_\alpha\cap R\right)\sm \left(\bigcup_{\gamma<\alpha}R_\gamma\right)=\emptyset$
    \item $\left(C_\alpha^2\cap G\right)\sm \left(\bigcup_{\gamma<\alpha}R_\gamma\right)=\emptyset $
    \item $C_\alpha\cap D_\alpha=\emptyset$
    \item $G\subseteq \bigcup_{\alpha<\beta} R_\alpha\cup S_\alpha$.
\end{enumerate} 

Given such a sequence, we may assume each $A_\alpha$ is closed upwards under the quasi-order generated by $R\sm \left(\bigcup_{\gamma<\alpha}R_\gamma\right)$. So, $f(x)(\alpha)=\chi_{A_\alpha}(x)$ defines a homomorphism. And $c(x)=\min\{\alpha: x\in C_\alpha\}$ gives a coloring of $G\cap (\equiv_f)$. Conversely, given a homomorphism and coloring we can get such a sequence by considering the color classes and the sets of the form $\{x: f(x)(\alpha)=i\}$ for $i\in\{0,1\}$.

Since $(1),(2),(3),$ and $(4)$ describe a refinement property, we can apply Theorem \ref{transfinite}.
\end{proof}
\begin{cor}
The set of pairs $(G,R)$ where $G$ is a Borel graph and $R$ is a Borel quasi-order with a homomorphism and coloring as above is $\Pi^1_1$ in the codes.
\end{cor}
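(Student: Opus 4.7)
The plan is to mimic the author's approach in Theorem~\ref{linearization}: reduce the joint existence of the homomorphism and countable coloring to the existence of a long sequence of pairs of boxes satisfying a refinement property, and then invoke Theorem~\ref{transfinite}.

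First I would encode a homomorphism $f$ of $R$ into $\leq_{lex}^\alpha$ together with a countable coloring of $G\cap(\equiv_f)$ by a single sequence $\ip{(R_\beta, S_\beta):\beta<\omega\cdot\alpha}$ of pairs of boxes $R_\beta=A_\beta\times B_\beta$ and $S_\beta=C_\beta\times D_\beta$ in $\baire^2$, with (a) $C_\beta\cap D_\beta=\emptyset$; (b) $(R_\beta\cap R)\sm \bigcup_{\gamma<\beta}R_\gamma=\emptyset$; (c) $(C_\beta^2\cap G)\sm\bigcup_{\gamma<\beta}R_\gamma=\emptyset$; and (d) $G\subseteq\bigcup_\beta(R_\beta\cup S_\beta)$. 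The forward direction sets $A_\beta=\{x:f(x)(\beta)=1\}$ (after closing upwards under the quasi-order generated by $R\sm\bigcup_{\gamma<\beta}R_\gamma$) and uses the extra $\omega$-factor of length to enumerate the color classes; the reverse direction recovers $f(x)(\beta)=\chi_{A_\beta}(x)$ and colors each point by the least $\beta$ with $x\in C_\beta$, where (c) ensures $G$-independence of the color class on $(\equiv_f)$-equivalent pairs and (d) ensures totality on $G\cap(\equiv_f)$.

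Second, I would verify that conditions (a)--(c) describe a refinement property in the sense of Section~\ref{general section}: being a box is a closure property, $C_\beta\cap D_\beta=\emptyset$ is the independence property that forbids diagonal points $(x,x)$ inside $S_\beta$, and (b),(c) are exactly the ``independence modulo $\baire_{\overline{A}_0\res\beta}$'' conditions in the definition of a refinement property, with the homomorphism-boxes $R_\beta$ playing the role of the main ($i=0$) sub-sequence. Condition (d), interpreted with ambient space $\baire^2\supseteq G$, becomes the covering hypothesis. Applying Theorem~\ref{transfinite} then yields a $\Delta^1_1$ sequence of $\Delta^1_1$ pairs of boxes of length $\omega\cdot\alpha'$ for some $\alpha'<\omega_1^{CK}$, from which the desired $\Delta^1_1$ homomorphism into $\leq_{lex}^{\alpha'}$ and a $\Delta^1_1$ countable coloring of $G\cap(\equiv_f)$ are extracted as above.

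I expect the main obstacle to be purely bookkeeping: lining up the dual sequence $(R_\beta,S_\beta)$ with the $\omega\cdot\alpha$-indexed refinement-property template (placing $R_\beta$ and $S_\beta$ at sub-indices $i=0$ and $i=1$ of a common super-index, so that the distinguished $i=0$ subsequence really is the sequence of homomorphism-boxes) and checking that the ``ruled out by earlier $R_\gamma$'s'' clauses in (b)--(c) are genuinely of the required independence form against $\baire^2_{\overline{R}\res\beta}$. These verifications closely parallel the ones already carried out in Theorem~\ref{linearization}, so no new ideas should be needed beyond the extra layer of indexing introduced by the coloring.
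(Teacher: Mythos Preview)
Your proposal is correct and follows essentially the same route as the paper. What you outline is precisely the paper's proof of Theorem~\ref{goho}: encode the homomorphism-plus-coloring data as a sequence of pairs of boxes $(R_\beta,S_\beta)$ satisfying your conditions (a)--(d), observe that (a)--(c) together with the box condition form a refinement property, and apply Theorem~\ref{transfinite}. The corollary itself is then immediate from Theorem~\ref{goho} by the standard relativization argument explained after the first example in Section~2, and the paper accordingly states the corollary without proof.
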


\section{Some dichotomies}

We end by establishing some dichotomy theorems related to some of the new effectivization results of the previous section. Our two results about generating graphs with functions can be deduced from a dichotomy for graphs equipped with group actions.

\begin{dfn}
For $G$ a Borel graph on $X$, $\Gamma$ a countable group, and $a$ an action of $\Gamma$ on $X$, we say that $f:\baire\rightarrow \omega\times \Gamma$ is an \textbf{a-coloring} of $(G,a)$ if, for all $n\in\omega$, $\{h\cdot_a x: f(x)=(n,h)\}$ is $G$-independent. 

Fix $\ip{(\gamma_n,s_n):n\in\omega}$ a sequence with $(\gamma_n,s_n)\in \Gamma\times 2^n$ such that, for each $\gamma\in \Gamma$, $\gamma=\gamma_n$ for infinitely many $n$, and every string in $2^{<\omega}$ is extended by some $s_n$. Call such a sequence \textbf{generic}. Then $G_\Gamma$ is the graph on $\Gamma\times 2^\omega$ defined by 
\begin{align*} x G_\Gamma y:\lra & (\exists z\in 2^\omega,n\in\omega) \left[x=s_n\fr 0 \fr z\mbox{ and }y=s_n\fr 1 \fr z \right].\end{align*}

For any $x\in 2^\omega,$ $a_\Gamma$ is the obvious action of $\Gamma$ on $\Gamma\times 2^\omega$:
\[\gamma\cdot (\gamma',z)=(\gamma\gamma',z).\]
\end{dfn}

Note that a graph $G$ is generated by a countably family of functions if and only if there is an a-coloring of $(G', a)$ where $G'$ is the graph on $G$ given by \[G'= \{(e,e')\in G^2: e_0=e'_0\}\] and $a$ is the action of $C_2=(\{\pm 1\},\times)$ given by
\[-1\cdot (x,y)=-(x,y)=(y,x).\] This along with the following theorem settles \cite[Problem 4.4]{orientations}. Similarly, a graph is generated by a single function if and only if there is an a-coloring of $(G'',a)$, where $eG'' e'$ if and only if $e C e'$ and $a$ is as before.

\begin{thm} \label{equivariant go}
For any Borel graph $G$ and Borel action of $a:\Gamma\curvearrowright\baire$, either $(G,a)$ admits a Borel a-coloring or there is an equivariant homomorphism from $G_\Gamma$ to $G$, i.e.~a map $f: \Gamma\times 2^\omega\rightarrow \baire$ so that 
\begin{enumerate}
    \item $f(h\cdot_{a_\Gamma} x)=h \cdot_a f(x)$
    \item If $x G_\Gamma y$ then $f(x) G f(y).$
\end{enumerate}
\end{thm}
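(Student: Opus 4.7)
The proof is a Gandy--Harrington dichotomy in the spirit of the classical $G_0$ proof of \cite{KST}. By relativizing, assume $G$, $a$, and $\ip{(\gamma_n, s_n) : n \in \omega}$ are all $\Delta^1_1$. Define the $\Sigma^1_1$, $\Gamma$-invariant exceptional set
\[
X := \baire \setminus \bigcup\{A \in \Delta^1_1 : (\exists h \in \Gamma)\, h \cdot_a A \text{ is } G\text{-independent}\}.
\]
Combining the independence-property structure of $G$-independence with the $\Delta^1_1$ bijection $x \mapsto h\cdot_a x$, Lemma \ref{theft} yields the key observation: for any $\bb P_1$-condition $p \subseteq X$ and any $h \in \Gamma$, the set $h\cdot_a p$ is not $G$-independent, else it would sit inside a $\Delta^1_1$ $G$-independent $A$, and $h^{-1}\cdot_a A$ would contradict $p \subseteq X$.

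In the case $X = \emptyset$, an $a$-coloring is obtained via the $\Pi^1_1$-uniformization argument from the first theorem of Section 2. The $\Pi^1_1$-total relation ``$c$ codes a $\Delta^1_1$ set $A$ containing $x$ with $h\cdot_a A$ being $G$-independent for some $h$'' admits a $\Delta^1_1$ uniformization, yielding a $\Delta^1_1$ cover $\ip{(A_n, h_n) : n \in \omega}$ of $\baire$ with each $h_n\cdot_a A_n$ being $G$-independent. Setting $f(x) := (n_x, h_{n_x})$ for the least $n_x$ with $x \in A_{n_x}$ gives the $a$-coloring, since $\{h \cdot_a y : f(y) = (n,h)\} \subseteq h_n \cdot_a A_n$.

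In the case $X \neq \emptyset$, I construct the equivariant homomorphism via a KST-style fusion. I build a tree $\ip{U_s : s \in 2^{<\omega}}$ of nonempty $\Sigma^1_1$ subsets of $X$ with $U_\emptyset = X$, $U_{s \fr i} \subseteq U_s$, and diameters shrinking uniformly, so that at the $n$-th splitting level the pair $(U_{s_n \fr 0}, \gamma_n \cdot_a U_{s_n \fr 1})$ is refined to witness a $G$-edge, with this edge condition preserved by all deeper refinements for every extension $t$. The fusion limit $g(x) := \bigcap_n U_{x \res n}$ is then a continuous map $2^\omega \to X$ satisfying $g(s_n \fr 0 \fr z) \, G \, (\gamma_n \cdot_a g(s_n \fr 1 \fr z))$ for all $n$ and $z$. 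Extending by $f(h, x) := h \cdot_a g(x)$ gives the desired equivariant homomorphism from $G_\Gamma$ to $G$.

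The technical heart is securing the cross-edges in the fusion: the sub-lemma as stated gives edges inside $(\gamma_n \cdot_a U_{s_n})^2 \cap G$, whereas we need edges in $U_{s_n} \times (\gamma_n \cdot_a U_{s_n}) \cap G$. The cleanest fix is to strengthen the exceptional set by also excluding $\Delta^1_1$ sets $A$ that are $(n)$-``cross-edge-free'' in the sense $(A \times \gamma_n \cdot_a A) \cap G = \emptyset$ for some $n$; this is again an independence property (in the two-variable sense), so Lemma \ref{theft} applies, the strengthened exceptional set remains $\Sigma^1_1$, and the cross-edge claim becomes automatic. The $X = \emptyset$ branch still produces an $a$-coloring after an analogous uniformization. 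Beyond this adjustment, the fusion bookkeeping and the final equivariant extension are entirely parallel to the classical $G_0$ argument.
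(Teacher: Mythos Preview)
Your overall architecture is sound and in fact more self-contained than the paper's (the paper simply quotes Miller's array dichotomy \cite{millers new thing} as a black box to produce $g$, then sets $f(\gamma,x)=\gamma\cdot g(x)$), but the fusion you describe is aimed at the wrong edge relation, and your attempted ``fix'' compounds the error rather than repairing it.

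Concretely: with $f(h,x):=h\cdot_a g(x)$, the $G_\Gamma$-edge between $(\gamma_n,s_n\fr 0\fr z)$ and $(\gamma_n,s_n\fr 1\fr z)$ must be sent to a $G$-edge, i.e.\ you need
\[
\bigl(\gamma_n\cdot_a g(s_n\fr 0\fr z),\ \gamma_n\cdot_a g(s_n\fr 1\fr z)\bigr)\in G,
\]
equivalently $g(s_n\fr 0\fr z)\ G_{\gamma_n}\ g(s_n\fr 1\fr z)$ where $G_{\gamma_n}$ is the pullback graph $x\,G_{\gamma_n}\,y\Leftrightarrow \gamma_n\cdot x\ G\ \gamma_n\cdot y$. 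What your fusion actually arranges is the \emph{cross} relation $\bigl(g(s_n\fr 0\fr z),\ \gamma_n\cdot_a g(s_n\fr 1\fr z)\bigr)\in G$, which does not yield the homomorphism property unless $a$ happens to act by $G$-automorphisms. Your final paragraph diagnoses a mismatch and tries to patch it by strengthening $X$ to exclude ``cross-edge-free'' sets, but this makes things worse: $(A\times\gamma_n\cdot A)\cap G=\emptyset$ does not produce an $a$-coloring piece, so the strengthened $X=\emptyset$ branch no longer delivers an $a$-coloring, and in the $X\neq\emptyset$ branch you are still building the wrong edges.

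The correction is simple and uses exactly your sub-lemma as stated. At level $n$ you know $\gamma_n\cdot_a U_{s_n}$ is not $G$-independent, so the $\Sigma^1_1$ set $(\gamma_n\cdot_a U_{s_n})^2\cap G$ is nonempty; pull it back by $\gamma_n^{-1}$ on each coordinate to get a nonempty $\Sigma^1_1$ subset of $U_{s_n}^2\cap G_{\gamma_n}$, and refine $U_{s_n\fr 0},U_{s_n\fr 1}$ from its projections. The limit $g$ then satisfies $g(s_n\fr 0\fr z)\ G_{\gamma_n}\ g(s_n\fr 1\fr z)$, and $f(h,x)=h\cdot_a g(x)$ is the desired equivariant homomorphism. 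No modification of $X$ is needed. You also omit the mutual-exclusivity half of the dichotomy; the paper handles this with a short Baire-category argument showing $(G_\Gamma,a_\Gamma)$ itself admits no Borel $a$-coloring.
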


\begin{proof}
First we show that the two options are mutually exclusive. If $c$ is an a-coloring of $(G,a)$ and $f$ is an equivariant homomorphism of $(G_\Gamma, a_\Gamma)$ into $G$, then $c\circ f$ is an a-coloring of $G_\Gamma.$ So, it suffices to show no such a-coloring exists. Suppose $c:\Gamma\times 2^\omega\rightarrow \omega\times \Gamma$ is a Borel a-coloring. Then, for some $(n,\gamma)$, $c\inv(n,\gamma)$ is nonmeager. Since $\Gamma$ acts by homeomorphisms, ${U=\gamma\cdot c\inv(n,\gamma)}$ is also nonmeager. Suppose $U$ is comeager in $N_{(\gamma', s)}$. By genericity of $\ip{(\gamma_n,s_n):n\in\omega}$, there is some $n$ with $\gamma'=\gamma_n$ and $s\subseteq s_n$. Let $t$ be the homeomorphism that flips the $(n+1)^{th}$ bit of a string. Then, every $(\gamma',\sigma)\in N_{(\gamma_n,s_n)}$ has a neighbour $(\gamma',t(\sigma))\in N_{\gamma',s}$. So, by Baire category, there is a nonmeager set of points in $U$ with a neighbour in $U$. This contradicts the definition of a-colorings.

For $\gamma\in \Gamma$, define $G_\gamma$ by $x\; G_\gamma\; y$ if and only if $\gamma\cdot x \;G \;\gamma\cdot y$. Then, there is an a-coloring of $(G,a)$ if and only if there a cover of $X$ by sets which are independent for some $G_\gamma$.

Let $G_{0,\gamma}$ be the graph on $2^\omega$ given by \[x \; G_{0,\gamma}\; y:\lra (\exists z\in 2^\omega, n\in\omega)\; \gamma_n=\gamma, x=s_n\fr 0\fr z, y=s_n\fr 1\fr z.\]

Suppose $(G,a)$ does not admit an a-coloring. By  \cite{millers new thing} and the genericity of $\ip{(\gamma_n,s_n):n\in\omega}$, there is some $g: 2^\omega\rightarrow G$ which is a homomorphism from $G_{0,\gamma}$ to $G_\gamma$  for every $\Gamma$.

Now define $f(\gamma,x)=\gamma\cdot g(x)$. We show this is an equivariant homomorphism from $(G_\Gamma, a_\Gamma)$ to $(G,a)$. Equivariance is clear from the definition, and if $(\gamma, x) G_\Gamma (\gamma, y)$, then $x \; G_{0,\gamma}\; y$, so $g(x)\; G_\gamma \; g(y) $ and $\gamma\cdot g(x)\; G \;\gamma\cdot g(y)$.
\end{proof}
\begin{cor}
The set of pairs of Borel graphs and actions which admit a-colorings is $\Pi^1_1$ in the codes.
\end{cor}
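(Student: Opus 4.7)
The plan is to follow the two-step pattern used for the earlier complexity corollaries in the paper: first prove an effectivization result saying that every $\Delta^1_1$ pair $(G,a)$ admitting a Borel a-coloring admits a $\Delta^1_1$ a-coloring, then convert this into the desired $\Pi^1_1$ bound via the standard ``$\exists f\in\Delta^1_1$'' trick.

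For the effectivization step, I would re-use the reduction to a covering problem that the paper already isolates in the proof of Theorem \ref{equivariant go}. For each $h\in\Gamma$, set $G_h=\{(x,y):(h\cdot_a x)\mathrel{G}(h\cdot_a y)\}$; this is $\Delta^1_1$ whenever $(G,a)$ is. A Borel a-coloring exists iff $\baire$ can be covered by countably many Borel sets each of which is $G_h$-independent for some $h\in \Gamma$: from a cover $\ip{A_i:i\in\omega}$ with witnesses $h_i\in\Gamma$, setting $f(x)=(i,h_i)$ at the least $i$ with $x\in A_i$ gives an a-coloring because each nonempty fiber $f\inv(n,h)$ is contained in the single set $A_n$ with $h=h_n$, so $\{h\cdot_a x: f(x)=(n,h)\}\subseteq h_n\cdot_a A_n$ is $G$-independent. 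Now ``$A$ is $G_h$-independent for some $h\in\Gamma$'' is a $\Delta^1_1$-indexed disjunction of independence properties, hence reflectable by Lemma \ref{theft}(1). Theorem \ref{general} then supplies a $\Delta^1_1$ sequence of $\Delta^1_1$ sets forming such a cover, from which the construction above produces a $\Delta^1_1$ a-coloring.

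For the complexity step, mimicking the first corollary of the paper, a real $c$ coding $(G,a)$ codes a pair admitting an a-coloring iff there exists $n\in\omega$ such that $\phi_n^c$ is a Borel code of a function $f:\baire\to\omega\times\Gamma$ which is an a-coloring. ``Being a Borel code of a total function'' is $\Pi^1_1$ in $c$ and $n$, and ``$f$ is an a-coloring'' is also $\Pi^1_1$ in $c$: for each $n\in\omega$, universally quantify over $x,y\in\baire$ and $h,h'\in\Gamma$ and assert
\[f(x)=(n,h)\wedge f(y)=(n,h')\rightarrow \neg\left((h\cdot_a x)\mathrel{G}(h'\cdot_a y)\right).\]
Since $\Pi^1_1$ is closed under countable conjunctions and disjunctions over $\omega$, the resulting formula in $c$ is $\Pi^1_1$.

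No serious obstacle is expected. The only real point of care is the cover-to-coloring reduction, which is essentially immediate from unpacking the definition of a-coloring once one notices that the ``least $i$'' construction collapses each color $n$ onto a single group element $h_n$.
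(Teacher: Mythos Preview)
Your proposal is correct and essentially parallels the paper's argument, though it is more explicit. Both rely on the same covering reformulation from the proof of Theorem \ref{equivariant go}: an a-coloring exists iff $\baire$ is covered by countably many Borel sets each $G_h$-independent for some $h\in\Gamma$. The paper's one-line proof simply observes that this reformulation is a reduction (on codes) to the array problem of Theorem \ref{arrays}, and then invokes that theorem's $\Pi^1_1$ corollary. You instead apply Lemma \ref{theft}(1),(3) and Theorem \ref{general} directly to the disjunction $\bigvee_{h\in\Gamma}\Phi_h$ of independence properties, and then spell out the ``$\exists f\in\Delta^1_1$'' argument. Your route is more self-contained and transparent; the paper's is shorter because the work was already packaged in Theorem \ref{arrays}. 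One small point you glossed over: after Theorem \ref{general} hands you a $\Delta^1_1$ sequence $\ip{A_i}$ with each $A_i$ satisfying $\bigvee_h\Phi_h$, you still need a $\Delta^1_1$ choice of witnesses $i\mapsto h_i$ to build the coloring; this follows from number uniformization since ``$A_i$ is $G_h$-independent'' is $\Pi^1_1$ in $(i,h)$ and total in $i$.
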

\begin{proof}
The proof above gives a reduction to the set of codes for sequences of graphs as in Theorem \ref{arrays}.
\end{proof}

So, a graph $G$ is generated by a countable family of Borel functions if and only if there is an equivariant homomorphism from $(G_{C_2}, a_{C_2})$ to $(\widetilde G, a)$ defined above. We can refine this analysis somewhat to eliminate reference to the auxiliary graph $\widetilde G$. 

\begin{thm} Define graphs $R$ and $G^{-}_{C_2}$ on $C_2\times 2^{\omega}$ as follows
\[R=\{(x,-x): x\in C_2\times 2^{\omega}\}\] and \[G^{-}_{C_2}=\{(-x,-y): (x,y)\in G_{C_2}\}.\] A graph $G$ on $X$ is generated by a countable family of Borel functions if and only if there is no homomorphism from $(G_{C_2},G^{-}_{C_2}, R)$ to $(=, \not =, G).$
\end{thm}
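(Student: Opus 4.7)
The plan is to reduce both directions of the biconditional to the equivariant $G_0$ dichotomy (Theorem~\ref{equivariant go}) applied to the auxiliary graph $\widetilde G$ on the edge set of $G$, where $e \widetilde G e' \lra e_0 = e'_0 \wedge e \neq e'$, equipped with the $C_2$-action $a$ given by edge-swap $(-1)\cdot e = -e$. By the discussion preceding Theorem~\ref{equivariant go}, $G$ is generated by a countable family of Borel functions if and only if $(\widetilde G, a)$ admits a Borel $a$-coloring, so Theorem~\ref{equivariant go} reduces the task to showing that homomorphisms $h$ as in the present theorem correspond naturally to equivariant homomorphisms $f: C_2 \times 2^\omega \to G$ from $(G_{C_2}, a_{C_2})$ to $(\widetilde G, a)$.

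For the direction ``homomorphism implies not generated,'' given such an $h$ I would set $f(x) = (h(x), h(-x))$. The $R$-preservation property of $h$ puts $f(x)$ into $G$ (and irreflexivity of $G$ forces $h(x) \neq h(-x)$). Equivariance $f(-x) = -f(x)$ holds by construction, and whenever $x\, G_{C_2}\, y$ we have $h(x) = h(y)$, so $f(x)_0 = f(y)_0$, hence $f(x) \widetilde G f(y)$. Theorem~\ref{equivariant go} then precludes an $a$-coloring of $(\widetilde G, a)$, i.e., $G$ is not generated.

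For the converse, apply Theorem~\ref{equivariant go} to produce an equivariant homomorphism $f: C_2 \times 2^\omega \to G$ and set $h(x) = f(x)_0$. The $G_{C_2}$-condition on $h$ is immediate from the homomorphism property of $f$, and the $R$-condition follows from equivariance: $(h(x), h(-x)) = (f(x)_0, f(x)_1) = f(x) \in G$. The main obstacle is the $G^{-}_{C_2}$ condition. By definition $x\, G^{-}_{C_2}\, y$ means $-x\, G_{C_2}\, -y$, so $f(-x) \widetilde G f(-y)$. This yields both $f(-x)_0 = f(-y)_0$ (hence $f(x)_1 = f(y)_1$ by equivariance) and $f(-x) \neq f(-y)$ (irreflexivity of $\widetilde G$), which together force $f(x)_0 \neq f(y)_0$, i.e., $h(x) \neq h(y)$. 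The delicate point is precisely this use of the irreflexivity of $\widetilde G$: without it, there would be no mechanism to convert the ``shared tail'' $\widetilde G$-relation on $f(-x), f(-y)$ into the required separation of $h$-values.
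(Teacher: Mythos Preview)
Your proof is correct and follows essentially the same route as the paper: both translate the statement into the correspondence between homomorphisms $h:(G_{C_2},G^{-}_{C_2},R)\to(=,\neq,G)$ and equivariant homomorphisms $f:(G_{C_2},a_{C_2})\to(\widetilde G,a)$, then invoke Theorem~\ref{equivariant go}. One small omission: in the direction ``homomorphism implies not generated'' you conclude $f(x)\,\widetilde G\,f(y)$ from $f(x)_0=f(y)_0$ alone, but since you defined $\widetilde G$ irreflexively you also need $f(x)\neq f(y)$; this follows because $x\,G_{C_2}\,y$ implies $-x\,G^{-}_{C_2}\,-y$, whence $h(-x)\neq h(-y)$ by the $G^{-}_{C_2}$-condition --- exactly the mirror of the irreflexivity argument you spell out in the converse direction.
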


\begin{proof}
Such a homomorphism amounts to a map ${f: C_2\times 2^\omega}$ so that 
\begin{enumerate}
    \item $f(x)=f(y)$ if $(x,y)\in G_{C_2}$
    \item $(f(x), f(- x))\in G$
    \item If $-x G_{C_2} -y$, then $f(x)\not=f(y)$ (or equivalently if $x G_{C_2} y$ then $f(-x)\not=f(-y)$.)
\end{enumerate}

We will show that such a map exists if and only if there is an equivariant homomorphism from $(G_{C_2}, a_{C_2})$ into the graph $\widetilde G=\{((x,y),(x,y')): (x,y), (x,y')\in G\}$ on $G$.

Suppose $f$ is a map as above. Then define $\tilde f:C_2\times 2^\omega\rightarrow G$ by $\tilde f(x)=(f(x),f(- x)).$ This is well-defined by property $(2)$. If $x,y$ are neighbours in $G_{C_2}$, then by property $(3)$, $f(- x)\not=f(- y)$, and by property $(1)$ $f(x)=f(y)$. So, $\tilde f(x)=(f(x),f(- x))= (f(y),f(- x))$ and $\tilde f(y)=(f(y),f(- y))$ are neighbours in $\widetilde G$. 

Suppose $f$ is an equivariant homomorphism from $(G_{C_2}, a_{C_2})$ to $\widetilde G$. Then define $\tilde f: C_2\times 2^\omega\rightarrow X$ by $\tilde f(x)=x_0$ where $f(x)=(x_0,x_1)$. If $f(x)=(x_0, x_1)\in G$, then $f(- x)=-{(x_0,x_1)}=(x_1,x_0)$, so $\tilde f(- x)=x_1$. This means $\tilde f(x)$ and $\tilde f(- x)$ share an edge in $G$, giving property $(2)$. And, if $x$ and $y$ are neighbours in $G_{C_2}$, then $f(x)$ and $f(y)$ are neighbours in $\widetilde G$, so $x_0=y_0$ and $x_1\not=y_1$; this gives properties $(1)$ and $(3)$ above.
\end{proof}

Note that $(1)$ and $(2)$ above say that $f$ descends to a homomorphism to $G$ from the graph on $(C_2\times 2^\omega)/E_{G_{C_2}}$ with edges $([x],[-x])$.

We can also characterize graphs induced by an action of $\Z^2$. There are two canonical obstructions in this case, one in 2 dimensions and one in 1 dimension. Let $\perp_G$ denote the graph on $G$ where two vertices (meaning edges in $G$) are adjacent if they perpendicular. The proof of Theorem \ref{actions} gives the following:

\begin{prop} If $G$ is locally a square lattice, then $G$ is induced by a free Borel action of $\Z^2$ if and only if $\perp_G$ admits a Borel 2-coloring and any forest of straight lines in $G$ is directable. 
\end{prop}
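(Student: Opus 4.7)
The plan is to reduce both directions to the correspondence between free Borel $\Z^2$-actions on $G$ and Borel Cayley diagrams that was established in the proof of Theorem \ref{actions}: $G$ is induced by a free Borel action of $\Z^2$ if and only if $G$ admits a Borel Cayley diagram $f:G\to \{a,b,-a,-b\}$. Once this correspondence is in hand, both directions will reduce to decomposing (or assembling) such an $f$ into its \emph{letter} component (a $2$-coloring of $\perp_G$) and its \emph{sign} component (a direction on each straight line).

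For the forward direction, start from a Borel Cayley diagram $f$. Define a $2$-coloring $c:G\to\{0,1\}$ of $\perp_G$ by $c(e)=0$ if $f(e)\in\{a,-a\}$ and $c(e)=1$ if $f(e)\in\{b,-b\}$; by axiom (3) in the definition of partial diagram this is a $2$-coloring of $\perp_G$. The color classes $c^{-1}(0)$ and $c^{-1}(1)$ are exactly the horizontal and vertical edges of each local square lattice, so each color class is a forest of straight lines. The sign of $f$ along each such line gives a $\Delta^1_1$ direction consistent along that line, so each of these forests is directable.

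For the reverse direction, let $c:G\to \{0,1\}$ be a Borel $2$-coloring of $\perp_G$. Within any $G$-component (which is a square lattice), no two edges in a single color class are perpendicular, so each class is a union of straight lines, i.e.\ a forest of straight lines in the required sense. Apply the directability hypothesis to each color class to obtain Borel functions $\sigma_0,\sigma_1:G\to\{+1,-1\}$ that consistently orient each line in $c^{-1}(0)$ and $c^{-1}(1)$ respectively. Set
\[
f(e) := \sigma_{c(e)}(e)\cdot \gamma_{c(e)}, \qquad \text{where } \gamma_0=a,\ \gamma_1=b.
\]
I would then verify the three Cayley diagram axioms: parallel edges lie in the same color class and get the same sign from $\sigma_{c(e)}$ (because they point the same way along a line, or along opposite sides of a rectangle, which the directing function respects); anti-parallel edges differ only in sign; perpendicular edges lie in different color classes, hence get different letters. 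Invoking the correspondence from Theorem \ref{actions} then yields a free Borel $\Z^2$-action whose Schreier graph is $G$.

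The main obstacle I anticipate is the reverse direction, specifically checking that the directions coming from the two separately chosen directability witnesses are genuinely compatible around rectangles — one has to confirm that consistency along each straight line suffices, because rectangles are controlled entirely by the parallel/anti-parallel structure within a single color class (opposite sides of a rectangle belong to the same class) and by the orthogonality of the two classes. A minor bookkeeping issue is what to do with components that are not simply connected (finite pieces, if any are allowed by "locally a square lattice"): on such components the color classes might contain cycles rather than lines, but a finite component admits a $\Delta^1_1$ selector and can be handled directly, as in the opening reduction of the proof of Theorem \ref{1function}.
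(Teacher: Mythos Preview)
The paper gives no standalone proof here; it just asserts that the proposition falls out of the proof of Theorem~\ref{actions}. Your forward direction is essentially correct, though you only verify directability for the two particular forests $c^{-1}(0)$ and $c^{-1}(1)$, whereas the statement asks that \emph{every} forest of straight lines be directable. This is easily repaired: any straight line lies entirely in one color class, so the sign of the Cayley diagram directs it, and restriction handles an arbitrary forest. (Your closing worry about non--simply-connected components is moot: ``locally a square lattice'' forces every component to be a copy of the $\Z^2$ lattice.)

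The reverse direction, however, has a genuine gap---and it is exactly the obstacle you name and then dismiss. Axiom (1) of a Cayley diagram requires $f(e)=f(e')$ whenever $e\|e'$, and parallelism includes the case where $-e$ and $e'$ lie on opposite sides of a rectangle, hence on \emph{distinct} parallel lines. Your letter component is automatically constant on such a pair, but your sign component need not be: directability of a forest of lines only orients each line individually, with no coherence imposed between parallel lines. Concretely, identify a component with $\Z^2$; if $\sigma_0$ directs row~$0$ to the right and row~$1$ to the left, then your induced generators satisfy $ab\cdot(0,0)\neq ba\cdot(0,0)$, so no $\Z^2$-action results. Your claim that ``consistency along each straight line suffices'' is therefore false as stated. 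Closing the gap requires producing, from the hypotheses, a direction that is constant on each parallelism class within a component; this is where the real content lies, and neither a bare appeal to directability nor the patching lemma of Theorem~\ref{actions} (which already presupposes partial diagrams as input) delivers it without further argument.
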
 

We think of $\perp_G$ having a 2-coloring as a 2-dimensional requirement, and having all forests of straight lines be directable as 1-dimensional. These two requirements are independent. %state ben's thing about forests somewhere,

\begin{prop}
There are Borel graphs $G$, $\Gamma$ where $\perp_G$ admits a Borel 2-coloring, and every forest of lines in $\Gamma$ is directable, but where neither is induced by a Borel free action of $\Z^2.$
\end{prop}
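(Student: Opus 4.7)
The plan is to invoke the preceding proposition: for each of $G$ and $\Gamma$, I will ensure exactly one of the two conditions (Borel 2-colorability of $\perp$, or directability of forests of straight lines) holds while the other fails.

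For $G$, I will take a graph box product. Let $F$ be a Borel forest of straight lines that admits no Borel directing (such $F$ exists classically, e.g., as the Schreier graph of a free Borel $\Z$-action without Borel transversal), and let $F'$ be any Borel directable forest of lines. Define $G = F \square F'$ on $V(F) \times V(F')$, with $(x,y)\sim (x',y')$ iff $(x F x' \wedge y=y')$ or $(x = x' \wedge y F' y')$. Each $G$-component is a square lattice, and within a component the $F$-edges and $F'$-edges are mutually perpendicular, so the Borel partition $\{F\text{-edges}, F'\text{-edges}\}$ of $E(G)$ is a Borel 2-coloring of $\perp_G$. However, the $F$-edges of $G$ form a forest of straight lines that is not Borel-directable: any Borel directing, restricted to an $F'$-fiber $V(F)\times\{y_0\}$, would yield a Borel directing of $F$. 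Hence by the preceding proposition, $G$ is not induced by a free Borel $\Z^2$-action.

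For $\Gamma$, I will take a Borel $\Z/2$-quotient of a Schreier graph of $\Z^2$. Let $X$ be the free part of the Bernoulli shift of $\Z^2 \rtimes \Z/2$ on $2^{\Z^2 \rtimes \Z/2}$, where the $\Z/2$-generator $s$ swaps the $\Z^2$-generators $a$ and $b$; let $H$ be the $\Z^2$-Schreier graph on $X$ and let $\tau$ be the action of $s$. Then $\tau$ is a free Borel involution intertwining $a$ and $b$, and so it preserves $E(H)$ while swapping horizontal and vertical edges. Let $\Gamma$ be the quotient graph on $Y = X/\tau$; each $\Gamma$-component is a square lattice. The canonical direction of straight lines in $H$ (from the $\Z^2$-action) is $\tau$-equivariant, sending directed $a$-edges to directed $b$-edges, so it descends to a Borel direction of every straight line of $\Gamma$, and every forest of straight lines in $\Gamma$ is Borel-directable. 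On the other hand, any Borel 2-coloring of $\perp_\Gamma$ pulls back to a $\tau$-invariant Borel 2-coloring of $\perp_H$, equivalently to a Borel set $A \subseteq X$ that is $\Z^2$-invariant with $A \sqcup \tau A = X$. Since $\tau$ preserves the Bernoulli measure, $\mu(A) = 1/2$; but $A$ is $\Z^2$-invariant and the $\Z^2$-shift on $2^{\Z^2 \rtimes \Z/2}$ is a factor of a Bernoulli $\Z^2$-shift (hence ergodic), so $\mu(A) \in \{0,1\}$, a contradiction.

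The main obstacle is setting up the reduction in the $\Gamma$ argument, i.e., that a Borel 2-coloring of $\perp_\Gamma$ corresponds to a Borel $\Z^2$-invariant selector for the $\tau$-action on $\Z^2$-orbits. This relies on the observation that, in each $\Gamma$-component (arising from a $\tau$-pair $\{O, \tau O\}$ of $\Z^2$-orbits), the bipartite structure of $\perp_\Gamma$ has exactly two sides, corresponding to ``$a$-edges in $O$ = $b$-edges in $\tau O$'' and ``$b$-edges in $O$ = $a$-edges in $\tau O$'', so that a Borel 2-coloring amounts to a Borel choice of reference orbit from each such pair. Once this reduction is in hand, the ergodicity argument and the $G$ construction both go through routinely.
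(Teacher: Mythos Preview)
Your argument is correct. For $\Gamma$ you do essentially what the paper does: quotient a Bernoulli $\Z^2$-lattice by the coordinate-swap involution, observe that the canonical orientation is swap-equivariant and hence descends (so every straight line is directed), and use ergodicity of the $\Z^2$-shift to rule out a $\Z^2$-invariant selector for the two $\Z^2$-orbits in each component, which is exactly what a Borel $2$-coloring of $\perp_\Gamma$ would amount to. The only cosmetic difference is your choice of Bernoulli space ($2^{\Z^2\rtimes \Z/2}$ versus the paper's $[0,1]^{\Z^2}$ with the full $\aut(\cay(\Z^2,E))$-action).

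For $G$ your route genuinely diverges from the paper's. The paper treats both examples uniformly: it takes finite subgroups $\Gamma\leq \aut(\cay(\Z^2,E))$ fixing the origin, forms the quotient $\tilde S_\Gamma$ of the free part of $[0,1]^{\Z^2}$, proves once (via ergodicity) that any nontrivial $\Gamma$ yields a graph not induced by a Borel $\Z^2$-action, and then chooses $\Gamma$ to be the Klein four-group of coordinate reflections to get a quotient on which the horizontal/vertical distinction survives and gives a $2$-coloring of $\perp$. Your box product $F\ \square\ F'$ with $F$ a non-directable forest of lines is more elementary for this half: it needs no ergodicity at all, since the failure of the $\Z^2$-action comes directly from the non-directability of the horizontal forest restricted to a single $F'$-fiber. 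What you lose is the uniformity --- the paper's single template handles both $G$ and $\Gamma$ and makes transparent which finite symmetry is being broken in each case --- but what you gain is a self-contained construction of $G$ that only presupposes the classical existence of a non-directable forest of lines.
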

\begin{proof}
Let $E=\{e_0,e_1,-e_0,-e_1\}$ be the standard generating set for $\Z^2$, and let $\ip{n,m}$ be the canonical image of $(n,m)\in \Z^2$ in $\aut(\cay(\Z^2, E))$. Suppose ${\Gamma\subseteq\aut(\cay(\Z^2, E))}$ is a subgroup so that $\gamma(0)=0$ and $\ip{\gamma(n,m)}\inv\gamma\ip{n,m}\in \Gamma$ for all $\gamma\in\Gamma$. Note that such a group must be finite. 

Let $\Fr_\Gamma$ be the free part of $[0,1]^{\Z^2}$. The group $\aut(\cay(\Z^2, E))$ acts on $\Fr_\Gamma$ by shifting indices, $\gamma\cdot x=x\circ \gamma\inv$. Let $\tilde S_\Gamma$ be the graph on $\Fr_\Gamma/\Gamma$ defined by 
\[\tilde S_\Gamma:= \{([x],[\ip{a}\cdot x]): x\in \Fr_\Gamma, a\in E\}.\]

First, we show that $\tilde S_\gamma$ is always locally isomorphic to $\cay(\Z^2,E).$ In fact, for any $x\in \Fr_{\gamma}$, $(a,b)\mapsto [\ip{a,b}\cdot x]$ is an isomorphism from $\cay(\Z^2,E)$ to the component of $[x]$. Clearly, this  map is a homomorphism. The relation
\[\gamma\cdot \left(\ip{n,m}\cdot x\right)=\ip{\gamma(n,m)}\cdot \left(\ip{\gamma(n,m)\inv}\gamma\ip{n,m}\cdot x\right)\] says that, for every $n,m\in\Z$ and $\gamma\in \Gamma$, there are unique $(a,b)\in\Z^2$ and $\delta\in \Gamma$ so that $\gamma\cdot (\ip{n,m}\cdot x)=\ip{a,b}\cdot (\delta\cdot x)$. So this map is a bijection. And since $\Gamma$ acts freely and preserves $E$, the same relation implies that non-edges are sent to non-edges.

Second, we show that if $\Gamma$ is nontrivial, then $\tilde S_\Gamma$ is not the Cayley graph of a Borel action of $\Z^2.$ Suppose otherwise and for $x\in\Fr_\Gamma$ consider the bijection $f_x:E\rightarrow E$ defined by
\[f_x(a)=b:\lra [\ip{a}\cdot x]=b\cdot [x].\] The same relation as above says that $f_{\gamma\cdot x}(a)=f_x(\gamma(b)\inv)$ for $\gamma\in \Gamma$. And $f_{\ip{n,m}\cdot x}=f_{x}$ for $n,m\in\Z^2$ since $\ip{n,m}$ commutes with $\ip{a}$ for $a\in E$. 

For some $f:E\rightarrow E$, the set $\{x: f_x=f\}$ has positive measure and is $\Z^2$-invariant. By ergodicity of the shift action, this set has measure one. But then $f_x=f_{\gamma\cdot x}$ for some $x$, which is a contradiction.

All that remains is to find suitable $\Gamma$ and $\Delta$ so that $\perp_{\tilde S_\Gamma}$ has a Borel 2-coloring and every forest of straight lines in $\tilde S_\Delta$ is directable.

Let $\Gamma$ be the group generated by $t_0$ and $t_1$ where $t_0(n,m)=(-n,m)$ and $t_1(n,m)=(n,-m)$. Since $t_i(\pm e_j)\in \{e_j,-e_j\}$ for each $i,j$, we can define a Borel 2-coloring of $\tilde S_\Gamma$ by 
\[f([x],[\pm e_i\cdot x])=i.\]

Let $\Delta$ be the group generated by $s$ where $s(n,m)=(m,n)$. Since $s(-e_i)=-s(e_i)$ for each $i$, we can define an orientation $o$ of $\tilde S_\Delta$ by
\[o=\{([x],[y]): x=e_0\cdot y\mbox{ or }x=e_1\cdot y\}.\] This orientation is balanced when restricted to any straight line, so gives a direction to every forest of straight lines.

\end{proof}

 Miller's $\s L_0$ dichotomy characterizes undirectable forests of lines \cite{directable forests}, and a result of Carroy, Miller, Schrittesser, and Vidyanszky characterizes 2-colorable graphs \cite{2coloring}. In the case of $\perp_G$, we can reflect this to a characterization of $G$, but we need to strengthen the results in \cite{2coloring}. %CItations

\begin{dfn}
Let $s_0=\emptyset$, and for $n>0$ let $s_n=0^{n-1}1$. Define a graph $\s L$ on $2^\omega$ by $\{x,y\}\in  \s L$ if and only if
\[(\exists z\in 2^\omega, n\in\omega) x=s_n\fr 0\fr z, y=s_n\fr 1\fr z. \]

For any graph $G$, let $G_{odd}=\{x,y: d_G(x,g)\mbox{ is odd}\}.$
\end{dfn}

If we let $G_n$ be the graph on $2^n$ gotten by restricting the definition of $G$ to strings of length n, then $G$ is a projective limit of the sequence $\ip{G_n:n\in\omega}$. Each $G_n$ is a path graph, and $G_{n+1}$ is gotten by connecting two endpoints of two copies of $G_{n+1}$.

\begin{thm}[Ess.~{\cite[Theorem 1.1]{2coloring}}] 
If $G$ is a Borel graph and not Borel 2-colorable, then there is a homomorphism of $\s L$ into $G_{odd}$.
\end{thm}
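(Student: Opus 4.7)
The plan is to adapt the Gandy--Harrington fusion argument of \cite{2coloring} to produce a continuous homomorphism $f \colon 2^\omega \to \baire$ that in fact lands inside $G$ itself (which suffices since $G \subseteq G_{odd}$). By relativization I may assume $G \in \Delta^1_1$. Set
\[ X = \baire \sm \bigcup\{A \in \Delta^1_1 : G\res A \text{ is Borel 2-colorable}\}. \]
The property ``$G\res A$ is Borel 2-colorable'' is $\Pi^1_1$ in a code for $A$, so $X$ is $\Sigma^1_1$. Moreover, if $X = \emptyset$ then a standard reflection argument produces a $\Delta^1_1$-indexed countable cover of $\baire$ by $\Delta^1_1$ 2-colorable sets together with a $\Delta^1_1$ selection of 2-colorings, and these patch into a single Borel 2-coloring of $G$; so $X$ is nonempty under the hypothesis.

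The core construction assigns to each $s \in 2^{<\omega}$ a nonempty $\Sigma^1_1$ set $A_s \subseteq X$ and a point $x_s \in A_s$ with $A_{s \fr i} \subseteq A_s$ and $\mathrm{diam}(A_s) \to 0$ along every branch, so that $f(x) := \lim_n x_{x \res n}$ is continuous. At the stage handling $s_n$, I fix an edge-coupling $W_n \in \bb P_2$ with $W_n \subseteq (A_{s_n \fr 0} \times A_{s_n \fr 1}) \cap G$, and thereafter every refinement of $A_{s_n \fr 0 \fr u}$ is matched, via the $W_n$-projection, by a refinement of $A_{s_n \fr 1 \fr u}$ (and vice versa) for every $u$. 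By the usual absoluteness for Gandy--Harrington forcing applied to the Borel set $G$, the limit pair $(f(s_n \fr 0 \fr z), f(s_n \fr 1 \fr z))$ lies in $G \subseteq G_{odd}$ for every $\s L$-edge. Existence of $W_n$ at each new stage reduces to the following splitting lemma: every nonempty $\Sigma^1_1$ set $A \subseteq X$ contains a $G$-edge. If not, $A$ is $G$-independent, and the reflection step in the proof of Lemma \ref{theft} (applied to the independence property ``no $G$-edge'') yields a $\Delta^1_1$ independent superset $A' \supseteq A$, which is trivially 2-colorable, contradicting $A \cap X \neq \emptyset$.

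I expect the main technical burden to be the parallel-refinement bookkeeping: once $W_n$ is fixed, every later refinement of any $A_{s_n \fr 0 \fr u}$ must be matched by a $W_n$-compatible refinement of $A_{s_n \fr 1 \fr u}$, and this must be maintained simultaneously for all active $s_n$'s. This is the standard fusion technique for $G_0$-style dichotomies and proceeds exactly as in \cite{2coloring}; the effectivization machinery of Sections 2 and 3 enters only through the splitting lemma and the nonemptiness of $X$, since the statement itself is a classical dichotomy rather than an effectivization result.
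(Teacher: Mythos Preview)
The paper does not prove this theorem; it is quoted from \cite{2coloring} (with $G_{odd}$ in place of $G$, hence the ``Ess.'') and used as a black box for the subsequent result about $\perp_G$. So there is no in-paper argument to compare against, but your outline has a genuine gap independent of that.

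The central error is the claim that if $X = \emptyset$, i.e.\ $\baire$ is covered by $\Delta^1_1$ sets $A$ with $G\res A$ Borel $2$-colorable, then these ``patch into a single Borel $2$-coloring of $G$.'' Local $2$-colorings do not glue: take $G$ to be a Borel disjoint union of odd cycles. Every vertex lies in a $\Delta^1_1$ $G$-independent (hence vacuously $2$-colorable) set, so your $X$ is empty, yet $G$ is not $2$-colorable. Two-colorability is a global parity constraint, not a covering property, so the reduction-to-covering template of Section~\ref{general section} does not apply as written. A second, related problem is that your $X$ need not be $\Sigma^1_1$: ``$G\res A$ is Borel $2$-colorable'' is prima facie $\Sigma^1_2$ in a code for $A$, and lowering it to $\Pi^1_1$ is essentially the effectivization consequence of the very dichotomy you are proving. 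Compare the proof of the next theorem in the paper: there one takes $X = G \sm \bigcup\{A \in \Delta^1_1 : A \text{ is } \perp_G\text{-independent}\}$, and the passage from ``$X = \emptyset$'' to a $2$-coloring succeeds only because in a square lattice each $\perp_G$-component has exactly two parallel classes. That special structure is what makes the covering reduction go through there, and it is absent for general $G$.
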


Note that if $G$ is locally a square lattice, then $(\perp_G)_{odd}=\perp_G$. The strengthening we need is a strong form of local injectivity for the homomorphism given by the theorem above.

\begin{thm}
If $G$ is locally a square lattice, and $\perp_G$ does not admit a Borel 2-coloring, then there is a Borel homomorphism from $\s L$ into $\perp_G$ so that, if $x,y$ are connected and $x\not=y$, then $f(x)$ and $f(y)$ do not lie on a straight line.
\end{thm}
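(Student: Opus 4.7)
The plan is to adapt the proof of the cited 2-coloring dichotomy rather than post-process a homomorphism extracted as a black box. That proof constructs the homomorphism $g: \s L \to (\perp_G)_{odd}$ as a limit of a Gandy--Harrington tree of conditions $A_s \in \bb P_1$, $s \in 2^{<\omega}$, where each $A_s$ is a nonempty $\Sigma^1_1$ subset of the vertex set of $\perp_G$ (i.e., of $G$, viewed as an edge set), subject to (i) $A_{s\fr i} \subseteq A_s$ with uniformly shrinking diameter, and (ii) whenever a new $\s L$-edge $\{s_n\fr 0\fr z,\, s_n\fr 1\fr z\}$ is introduced at some level, some $e \in A_{s_n\fr 0\fr z}$ and $e' \in A_{s_n\fr 1\fr z}$ are $\perp_G$-adjacent (i.e.\ perpendicular in $G$).

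First, I would add to the inductive invariants the extra condition (iii): for all distinct $s, t \in 2^n$, no $e \in A_s$ and $e' \in A_t$ lie on a common straight line of $G$. Since $G$ is locally a square lattice, the relation ``share a straight line'' is $\Delta^1_1$ on $G$ with countable sections, and the collection of straight lines through a fixed edge is a $\Delta^1_1$ family indexed by $\omega$. This makes (iii) an avoidance constraint against an $F_\sigma$ set determined by the finitely many already-pinned sibling conditions at each stage.

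Next, I would check that (iii) propagates through the inductive refinement step. At the $n \to n+1$ step, after shrinking each $A_s$ to a small-diameter $\Sigma^1_1$ piece, the new violations of (iii) can only involve the newly created children; for each such pair we further shrink by removing the union of all straight lines meeting the finitely many already-pinned pieces at the current level. This forbidden region is a $\Delta^1_1$, countable union of straight lines, and its complement in $A_s$ is still $\Sigma^1_1$. The standard argument from the cited theorem then needs to supply a perpendicular witness inside the cut-down conditions. I would argue this is possible because the obstruction to Borel 2-colorability of $\perp_G$ is unaffected by removing countably many straight lines per $G$-component: within each component $\perp_G$ is complete bipartite between the two axis classes, and the Borel obstruction lives in the non-uniformizable choice of which class is ``horizontal,'' which is insensitive to deleting countably many rows/columns. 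Formally, if the restricted condition admitted a Borel 2-coloring, the $\Delta^1_1$ parametrization of the removed straight lines would allow us to extend it back to a Borel 2-coloring of $\perp_G$ on $A_s$, contradicting the base non-2-colorability that powered the original construction.

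The main obstacle is exactly this persistence-of-non-2-colorability lemma under straight-line removal, together with making sure that the derivative/kernel step the cited proof uses to locate a perpendicular-edge witness still applies after the cut. Assuming this is set up carefully --- which I expect to be bookkeeping rather than a genuinely new idea, since straight-line removal is a ``thin'' $\Delta^1_1$ operation per component --- the refined tree construction yields $f : 2^\omega \to G$ that is a Borel homomorphism $\s L \to \perp_G$ (by (ii)) and satisfies the straight-line avoidance on all pairs of distinct points at every finite level (by (iii)), which by taking limits and using that connectedness in $\s L$ only uses finitely-many-step paths gives the desired conclusion for all distinct connected $x, y$.
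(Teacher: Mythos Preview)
Your high-level plan matches the paper's: run the tree construction with an added straight-line-avoidance invariant. The gap is in how you maintain (iii), and the step you flag as ``the main obstacle'' and then wave off as bookkeeping is exactly where the substance lies. Two concrete problems with your excision move. First, at the duplication step $n\to n+1$ both half-configurations are produced simultaneously, and you need every point of the $0$-half to avoid the straight line through every point of the $1$-half; there is nothing ``already pinned'' to cut against. Second, even for previously fixed data, the straight lines through a $\Sigma^1_1$ piece $A_s$ form a $\Sigma^1_1$ (not $\Delta^1_1$) set, and the lines through specific Choquet points $x_s$ are only $\Delta^1_1(x_s)$; either way the removal takes you out of lightface Gandy--Harrington, which the contradiction step (refining a $\Sigma^1_1$ $V\subseteq X$ to a $\Delta^1_1$ independent set) genuinely relies on. Your ``persistence of non-$2$-colorability under line removal'' sketch is also aimed at the wrong statement: the question at the extension step is whether a $\Sigma^1_1$ set of candidate edges contains a suitably placed perpendicular \emph{pair}, not whether some restricted graph is $2$-colorable.

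The paper's fix is a quantitative reduction. It tracks a diameter bound $d(n)$ on level-$n$ configurations and requires only that the single connecting perpendicular pair $(y_0,y_1)$ be $3d(n)$-spaced (every L-shaped path between them has both legs of length at least $3d(n)$). Since each half lives in a $d(n)$-ball around its connecting point, this one pairwise $\Sigma^1_1$ condition forces the halves into disjoint rows and columns, giving (iii) for all cross-pairs at once without any removal. The real lemma is then that any $\Sigma^1_1$ $V\subseteq X$ contains an $m$-spaced perpendicular pair for every $m$; the paper proves this by a geometric dichotomy --- if not, each component of $V$ is finite or trapped between two parallel lines, and in either case one can $\Delta^1_1$-uniformly select a $\perp_G$-independent subset of $V$, contradicting $V\subseteq X$. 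This is the idea your outline is missing.
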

\begin{proof} By relativization, we may assume $G$ is $\Delta^1_1$. Suppose $\perp_G$ does not admit a $\Delta^1_1$ 2-coloring. Define \[X:=G\sm\bigcup \{A\in \Delta^1_1: A\mbox{ is }\perp_G\mbox{ independent}\}.\] By reflection, $X$ is the same as $G$ without its $\Sigma^1_1$ independent sets. If $X$ is empty, then we can construct a $\Delta^1_1$ 2-coloring of $\perp_G$. So $X$ is nonempty.

As usual, we will build a homomorphism by constructing trees of conditions forcing the graph relations we want. To ensure we get the kind of injectivity we want, we will make sure our generics force points in the image of our homomorphism to be far apart. 

For edges $e,e'$ let $d_G(e,e')=d_G(e_0,e'_0)$ with respect to the graph metric. Say that $e,e'$ are $n$-spaced if, whenever we have paths $p,p'$ which are perpendicular line segments such that $e_0$ is on $p$ and $e'_0$ is on $p'$, then the length of $p,p'$ are both of length at least $n$.

Fix a strategy for the second player in the Choquet game on $X$ equipped with the Gandy--Harrington topology along with a suitable metric for this topology. We build a tree of reals. We inductively build trees of reals and Gandy--Harrington conditions $\ip{U_\sigma: \sigma\in 2^{<\omega}}, \ip{x_{\sigma}: \sigma\in 2^{<\omega}}, \ip{R_{\sigma,\tau}: \sigma,\tau\in G_n}$, so that
\begin{itemize}
    \item $U_\sigma\subseteq X$, $R_{\sigma,\tau} \subseteq \perp_G$, $x_\sigma\in X$
      \item If $\sigma\subseteq \tau$, then
    \[U_\sigma\subseteq U_\tau\]
    \item For any appropriate increasing sequences $\sigma_n,\tau_n\in 2^{<\omega}$ with ${|\sigma_n|=|\tau_n|=n}$, there are  $\widetilde R_{\sigma_n,\tau_n}$ so that the following is a play of the strong Choquet game following our strategy
    \begin{align*}
        \mbox{I}: & R_{\sigma_n,\tau_n},(x_{\sigma_n},x_{\tau_n}) &\; & R_{\sigma_{n+1},\tau_{n+1}},(x_{\sigma_{n+1}},x_{\tau_{n+1}}) & ... \\
        \mbox{II}: &  \; & \widetilde R_{\sigma_n,\tau_n}& \; & \widetilde R_{\sigma_{n+1},\tau_{n+1}} & \;...
    \end{align*}
    \item For each $n$, there is some $d(m)$ so that, if $\ip{x_\sigma:\sigma\in 2^{n}}$ satisfies $(x_{\sigma},x_{\tau})\in R_{\sigma,\tau}$ for all $(\sigma,\tau)\in G_n$, then the diameter of $\ip{x_\sigma:\sigma\in 2^{n}}$ (with respect to the distance $d_G$ defined above) is at most $d(n)$.
    \item If $(x,y)\in R_{s_n\fr0,s_n\fr 1}$, then $x,y$ are at least $3d(n)$-spaced.
\end{itemize}
This done, we have that, for any $s\in 2^\omega$, $f(s)=\lim_n x_{s\res n}$ is defined an defines a homomorphism as desired.

We do this inductively.  Given $x_\sigma, U_\sigma, R_{\sigma, \tau}$ for $|\sigma|=|\tau|=n$, produce $\widetilde R_{\sigma, \tau}$ and $\widetilde T_\sigma$ using our strategy. Say $f: 2^n\rightarrow X$ is an approximation if, for all $\sigma, \tau\in 2^n$
\begin{enumerate}
    \item $f(\sigma)\in U_\sigma$
    \item if $\sigma G_n\tau$, $f(\sigma)\widetilde R_{\sigma,\tau} f(\tau)$
\end{enumerate} Note that $f(\sigma)=x_\sigma$ is an approximation. Consider
\[V=\{y: (\exists (y_\sigma){\sigma\in 2^n} )\left[ y_{s_n}=y \mbox{ and }f(\sigma)=y_\sigma\mbox{ is an approximation}\right]\}.\]

Since $V$ is $\Sigma^1_1$ and a subset of $X$, $V$ contains perpendicular edges. We need to show it must in fact contain perpendicular edges which are $3d(n)$-spaced. Suppose otherwise. Then, in each component, $V$ is either finite or contained between two parallel straight lines. But then we can refine $V$ to an independent set as follows: 
\begin{itemize} 
\item If $[x]_{\perp_G}\cap V$ is finite, keep $x$ if and only if it is the smallest (with respect to some $\Delta^1_1$ linear order) in this set
\item If $[x]_{\perp_G}\cap V$ is infinite, keep $x$ if and only if $x$ is parallel to the lines bounding this set.
\end{itemize} Since our graph is locally countable, this construction gives a $\Delta^1_1$ set.

Now fix $(y_0, y_1)\in V^2$ which are $3d(n)$-spaced. Set  $x_{s_n\fr 0}=y_0$ and $x_{s_n\fr 1}=y_1$ and for $x_{\sigma\fr i}$ for all other $\sigma,i$ choose witnesses to $y_i\in V.$ Then set $U_\sigma$ to be any small enough ball around $x_\sigma$, and let $R_{\sigma,\tau}$ be appropriate refinements of the $\widetilde R$s which fix the spacing of $y_0$ and $y_1$ to be $m>3d(n)$. Finally, let $d(n+1)=10m$.
\end{proof}

With this in place, we can start characterizing graphs $G$ where $\perp_G$ admits a Borel 2-coloring.

\begin{dfn} If $G$ and $H$ are graphs on $X,Y$ which are locally square lattices, then say $f:X\rightarrow Y$ is a conformal map from $G$ to $H$ if, for any vertices $x,y$ of $G$, $x,y, z$ are on a straight line if and only if $f(x),f(y)$ are on a straight line.\end{dfn} 

Conformal maps preserve perpendicularity, so we can pull a 2-coloring back from $\perp_G$ to $\perp_H$ whenever there is a conformal map from $H$ to $G$.

\begin{prop}
If there is a Borel conformal map from $H$ to $G$ and $\perp_G$ has a Borel 2-coloring, then $\perp_H$ has a Borel 2-coloring.
\end{prop}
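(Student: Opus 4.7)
The plan is to simply pull back the 2-coloring $c$ of $\perp_G$ along $f$. Given an edge $e=(e_0,e_1)$ of $H$, the endpoints are on a straight line in $H$ (a single edge), so by conformality $f(e_0)$ and $f(e_1)$ are on a straight line in $G$; in a square lattice there is a unique such straight line, so we get a canonical line segment $\pi(e)$ in $G$ between them. Define $\tilde c(e)=c(e')$ where $e'$ is any edge lying on $\pi(e)$. This is well-defined because any two edges on the same straight line in $G$ are parallel (by the paper's definition, they lie on a common straight line), and $c$, being a 2-coloring of $\perp_G$, is constant on parallel classes within a component. It is Borel because $f$, the straight-line function, and $c$ are Borel; taking, e.g., the lexicographically first edge on $\pi(e)$ eliminates any choice.

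The main task is to verify that $\tilde c$ is a valid 2-coloring, i.e.\ that perpendicular edges $e,e'$ of $H$ receive different $\tilde c$-colors. I first reduce to the case where $e,e'$ share a vertex. In a square lattice, if $e\perp e'$ then they lie in a common component $C$, and one can produce an edge $e''$ lying in $C$ with $e''\|e$ and $e''$ sharing an endpoint with $e'$ (translate $e$ along its perpendicular direction until it meets $e'$). Now $e\|e''$ in $H$ means either $e,e''$ share a straight line or $-e,e''$ are opposite sides of a rectangle, and in both cases $f$ sends this configuration to a straight line resp.\ a rectangle in $G$ (both are $\Delta^1_1$ witnesses expressible purely via the ``on a straight line together'' relation, which $f$ preserves), so $\pi(e)$ and $\pi(e'')$ lie on parallel lines in $G$. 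Hence $\tilde c(e)=\tilde c(e'')$, and it suffices to handle the case $e=e''$, where $e$ and $e'$ share a vertex.

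So suppose $e\perp e'$ in $H$ and they share a vertex $v$. Then the endpoints $e_1,e'_1$ opposite to $v$ are at distance $1$ from $v$ in perpendicular directions of the square lattice component, and in particular $e_1,e'_1$ are \emph{not} on a straight line in $H$. By conformality, $f(e_1)$ and $f(e'_1)$ are not on a straight line in $G$. But $f(v),f(e_1)\in\pi(e)$ and $f(v),f(e'_1)\in\pi(e')$, so if $\pi(e)$ and $\pi(e')$ were the same line, $f(e_1)$ and $f(e'_1)$ would both lie on it, a contradiction. Thus $\pi(e)$ and $\pi(e')$ are distinct straight lines through $f(v)$ in the same $G$-component, which in a square lattice forces one to be a row and the other a column. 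Any edge on $\pi(e)$ and any edge on $\pi(e')$ are therefore perpendicular in $G$, so get different $c$-colors, giving $\tilde c(e)\neq \tilde c(e')$.

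The main obstacle is purely one of bookkeeping: carefully unpacking what ``conformal'' forces on rectangles and on perpendicular-sharing-a-vertex configurations in the square lattice. Once one sees that conformal maps, being defined purely via the ``on a straight line together'' relation, automatically preserve both the parallel and perpendicular relations on edges (modulo the trivial parallel-transport step above), the construction of $\tilde c$ is a direct pullback and no forcing or reflection is needed.
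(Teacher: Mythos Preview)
Your proof is correct and follows essentially the same approach as the paper: both define the pullback coloring $\tilde c(e)=c(e')$ for any edge $e'$ on the straight line in $G$ between $f(e_0)$ and $f(e_1)$, and both verify that perpendicular edges in $H$ map to perpendicular line segments in $G$ by using that conformal maps send rectangle-corner configurations to rectangle-corner configurations. The only difference is that the paper handles an arbitrary perpendicular pair $e\perp_H e'$ in one step---placing $e$ and $e'$ on adjacent sides of a single rectangle and observing that the image rectangle has $f(e_0),f(e_1)$ and $f(e'_0),f(e'_1)$ on consecutive sides---whereas you insert an intermediate reduction to the case where $e,e'$ share a vertex via parallel transport (which itself invokes the same rectangle-preservation fact); this extra step is harmless but not needed.
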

\begin{proof}
First note that if $v_1,v_2,v_3,$ and $v_4$ are consecutive corners of a rectangle in a square lattice if and only $v_i$ and $v_j$ are on straight line exactly when $i=j$ (mod 2). So the corners of a rectangle in $H$ are sent to the corners of a rectangle in $G$ by any conformal map.

Now suppose that $f$ is a Borel conformal map from $H$ to $G$ and $g$ is a Borel 2-coloring of $\perp_G$. Then we have a Borel map $\tilde g:H\rightarrow 2$ given by $\tilde g(e_0,e_1)=g(e')$ for any edge $e'$ on the straight line between $f(e_0)$ and $f(e_1)$. If $e\perp_H e'$, then possibly replacing $e$ with $-e$, we can find a rectangle containing $e,e'$. This rectangle is sent by $f$ to a rectangle with consecutive sides containing $f(e_0), f(e_1)$ and $f(e'_0),f(e'_1)$. The coloring $g$ must assign opposite colors to all edges on consecutive sides of a rectangle, so $\tilde g(e)\not=\tilde g(e').$
\end{proof}

Our goal is to show there is a minimal square lattice with respect Borel conformal maps among those where $\perp$ is not Borel 2-colorable.

\begin{dfn}
 Let $X_{\bb S_0}=\{(x,y)\in 2^\omega: d_{\s L}(x,y)\mbox{ is odd}\}/E$ with the quotient Borel structure, where $[(x,y)]_E=\{(x,y),(y,x)\}$. Abbreviate $[(x,y)]_E$ by $[x,y]$. Then $\bb S_0$ is the graph on $X_{\bb S_0}$ defined by
 \[[x,y]\; \bb S_0\; [a,b]:\lra x=a,d_{\s L}(y,b)=2\mbox{, or }y=b,d_{\s L}(x,a)=2.\]
\end{dfn}

Before proving our dichotomy, let us show that $\bb S_0$ is locally a square lattice.

\begin{prop}
$\bb S_0$ is locally a square lattice.
\end{prop}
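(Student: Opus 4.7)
The plan is to analyze the components of $\s L$ explicitly and then read off the local structure of $\bb S_0$ via an explicit isomorphism with (a subgraph of) $\cay(\Z^2,\{\pm e_1,\pm e_2\})$.

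First I would observe that each $x\in 2^\omega$ participates in at most two edges of $\s L$: the rule with $n=0$ always contributes one edge (flipping the first bit), while for $n\geq 1$ the rule requires $x$ to begin with $s_n=0^{n-1}1$, which pins $n$ down to be one more than the position of the first $1$ in $x$ (and is unavailable entirely if $x$ has no $1$). Consequently every $x\neq 0^\omega$ has degree exactly $2$ in $\s L$, while $0^\omega$ has degree $1$. Hence each connected component of $\s L$ is a simple path: all are bi-infinite except for the one-way infinite component based at $0^\omega$.

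Fixing such a component $C$ and labeling its vertices consecutively by integers (or by $\Z_{\geq 0}$ in the exceptional case), the vertices of $\bb S_0$ inside $C$ are exactly the unordered pairs $\{i,j\}$ with $i+j$ odd, and by the definition two such pairs are adjacent iff they share a coordinate and the other coordinates differ by $2$. Since one of $i,j$ is even and the other odd, I would write each pair as $\{e,o\}$ and define
\[\phi(\{e,o\}):=\left(\tfrac{e}{2},\,\tfrac{o-1}{2}\right).\]
Shifting $e$ by $\pm 2$ (resp.\ $o$ by $\pm 2$) corresponds to shifting $\phi$ by $\pm e_1$ (resp.\ $\pm e_2$), so $\phi$ is a graph isomorphism from the part of $\bb S_0$ inside $C$ onto its image in $\cay(\Z^2,\{\pm e_1,\pm e_2\})$.

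For bi-infinite $C$ the image is all of $\Z^2$, which gives the square lattice structure on the bulk of $\bb S_0$. The one subtlety is the exceptional component, which maps onto the quadrant $\Z_{\geq 0}^2$ so that the pair $\{0^\omega,10^\omega\}$ becomes a corner of degree $2$; I expect this to be absorbed by the paper's reading of ``locally a square lattice'' (interior vertices of the quadrant still have $\Z^2$-isomorphic neighborhoods), or else handled by simply discarding this single countable component at the outset, since it does not affect the intended applications. The only real work is the two bookkeeping verifications that $\phi$ is a well-defined bijection onto the stated image and that it transports adjacency correctly, and no deeper obstacle is foreseen.
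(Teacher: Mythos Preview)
Your approach is essentially the paper's: both identify an $\s L$-component with $\Z$ via its path structure and send odd-distance unordered pairs to $\Z^2$ via the even/odd split (your $\phi$ is exactly the inverse of the paper's map $(a,b)\mapsto(2a,2b+1)$). You are in fact more careful than the paper, which simply asserts that each $\s L$-component is $2$-regular; you correctly catch the degree-$1$ vertex $0^\omega$ and the resulting quarter-plane component, and discarding that single countable component as you suggest is harmless for the downstream use of $\bb S_0$.
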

\begin{proof}
Fix a component $C$ of $\bb S_0$. The corresponding component of $\s L$ is 2-regular and acyclic, so we can fix an isomorphism between it and the usual Cayley graph for $\Z$. So we can represent $C$ as $\{(n,m): n\in 2\Z, m\in 2\Z+1\}$. And we have that $(n,m)$ and $(\ell,k)$ are adjacent if $n=\ell\pm2$ or $m=k\pm 2.$ Thus $(a,b)\mapsto (2a, 2b+1)$ defines an isomorphism between the usual Cayley graph for $\Z^2$ and $C$.
\end{proof}

\begin{thm}\label{axndichotomy}
If a Borel graph $G$ is locally a square lattice, then either $\perp_G$ admits a Borel 2-coloring, or there is a Borel conformal map from $\bb S_0$ to $G$.
\end{thm}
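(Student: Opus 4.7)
By the preceding proposition, a Borel conformal map from $\bb S_0$ to $G$ pulls back any Borel $2$-coloring of $\perp_G$ to a Borel $2$-coloring of $\perp_{\bb S_0}$, so it suffices to observe that $\perp_{\bb S_0}$ admits no Borel $2$-coloring. I will prove this by a Baire category argument in the spirit of the first paragraph of the proof of Theorem~\ref{equivariant go}: the genericity of the sequence $\langle s_n\rangle$ built into $\s L$ lets one produce, inside any comeager set of $X_{\bb S_0}$, pairs of $\bb S_0$-edges that meet at a common vertex and flip in orthogonal bit-positions, hence are $\perp_{\bb S_0}$-adjacent, contradicting the $2$-coloring property.

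\textbf{Main construction.} Assume $\perp_G$ admits no Borel $2$-coloring and apply the preceding theorem to get a Borel homomorphism $f\colon 2^\omega\to E(G)$ from $\s L$ to $\perp_G$ satisfying that $f(x)$ and $f(y)$ do not lie on a common straight line whenever $x\neq y$ are $\s L$-connected. For $[x,y]\in X_{\bb S_0}$ the distance $d_{\s L}(x,y)$ is odd, so iterating the homomorphism along an $\s L$-path forces $f(x)\perp f(y)$ inside a common $G$-component, and line-avoidance forces their straight-line extensions to be distinct perpendicular lines meeting at a unique vertex of that $\Z^2$-like component. Define $g([x,y])$ to be this intersection vertex; Borelness and well-definedness on $E$-classes are immediate from the symmetry of the intersection construction.

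\textbf{Conformality and the main obstacle.} The forward direction is immediate: three collinear vertices of $\bb S_0$ lie on a row or column and so share a coordinate, say $y_0=y_1=y_2$, and then each $g([x_i,y_0])$ lies on the straight-line extension of $f(y_0)$, so the images are collinear in $G$. For the backward direction, given three collinear points $g([x_i,y_i])$ on a line $L$ of $G$, swap coordinates within each unordered pair so that $f(x_i)$ extends to $L$ for every $i$; line-avoidance within a single $\s L$-component then yields $x_0=x_1=x_2$ and hence collinearity in $\bb S_0$. The main obstacle I expect is that line-avoidance is only asserted within a single $\s L$-component, so two $x_i$ from distinct $\s L$-components could a priori lie on the same $G$-line. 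The plan to overcome this is to strengthen the preceding theorem by inserting a separation step in its Choquet-game construction of $f$: at each stage the player targets only finitely many new lines and so can avoid the countably many lines already ``used'' by $f$ on previously constructed $\s L$-components, ensuring in the limit that $f$ maps distinct $\s L$-components to disjoint families of straight lines of $G$. With that strengthening the backward direction of conformality goes through, and the two alternatives of the dichotomy are the only possibilities.
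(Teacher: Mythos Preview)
Your main construction is right and matches the paper, but your ``main obstacle'' is a phantom. Two vertices $[x_0,y_0]$ and $[x_1,y_1]$ of $\bb S_0$ lie in the same $\bb S_0$-component if and only if all of $x_0,y_0,x_1,y_1$ lie in the same $\s L$-component (this is immediate from the definition of $\bb S_0$-edges). Since ``lying on a straight line'' is a within-component notion, the backward direction of conformality only ever needs to compare $\s L$-points in a common $\s L$-component, and there the line-avoidance property you already have is exactly what is required. The paper's proof of the converse simply assumes $d_{\s L}(x_0,y_0)\equiv 0\pmod 2$ (after relabelling inside the unordered pairs), uses $x_0\neq y_0$ and $x_1\neq y_1$, and invokes line-avoidance to conclude $f(x_0),f(y_0),f(x_1),f(y_1)$ all lie on different straight lines. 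No cross-component separation step is needed, and your proposed strengthening of the preceding theorem is unnecessary.

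For mutual exclusivity your approach is different from the paper's and, as written, is not convincing: $X_{\bb S_0}$ is a quotient space, and a direct Baire-category argument on it would need care you have not supplied. The paper instead reduces to the known fact that $\s L$ has no Borel $2$-coloring. The key observation is that the straight lines of $\bb S_0$ are exactly the sets $L_a=\{[a,b]:d_{\s L}(a,b)\text{ odd}\}$, so a Borel $2$-coloring of $\perp_{\bb S_0}$ is constant on the edges along each $L_a$ and hence induces a Borel map $g:2^\omega\to 2$. Whenever $d_{\s L}(a,b)$ is odd there exist perpendicular edges in $L_a$ and $L_b$, so $g(a)\neq g(b)$; thus $g$ is a Borel $2$-coloring of $\s L$, a contradiction. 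This is both shorter and avoids working on the quotient.
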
 
\begin{proof}
First we show $\bb S_0$ does not admit a conformal map into any graph $G$ where $\perp_G$ is Borel 2-colorable. It suffices to show $\perp_{\bb S_0}$ does not admit a Borel 2-coloring. Note that the straight lines in $\bb S_0$ are the sets $L_a=\{[a,b]: d(a,b)\mbox{ is odd}\}$ for $a\in \s L$. Suppose $f: G\rightarrow 2$ is a Borel 2-coloring of $\perp_{\bb S_0}$. Then for each $a$, the edges in $L_a$ are all assigned the same color by $f$. This defines a Borel function $g:\s L\rightarrow 2$. Suppose $d(a,b)$ is odd, $d(a,a')=d(b,b')=2$. Then $([a,b], [a',b])$ and $([a,b],[a,b'])$ are perpendicular edges in $L_b$ and $L_a$ respectively, so edges in $L_a$ and $L_b$ are assigned different colors by $f$, and $g(a)\not=g(b)$. Thus $g$ is a Borel 2-coloring of $\s L$, which is a contradiction. 

Now suppose $G$ is locally a square lattice and $\perp_G$ does not admit a Borel 2-coloring. So, there is a Borel homomorphism $f$ from $\s L$ to $\perp_G$ which sends connected vertices to edges which are not on a straight line. Define an map $g$ from $\bb S_0$ to $G$ by setting $g([a,b])$ to be the unique point on both the straight line containing $f(a)$ and the straight line containing $f(b)$. This is well-defined since any two perpendicular lines in the same plane must intersect. All that remains is to check this is conformal.

Suppose $[x,y_0]$ and $[x,y_1]$ are two points on a straight line $L_x$. Then $g([x,y_0])$ and $g([x,y_1])$ are on the straight line through $f(x)$. Conversely, suppose $[x_0,x_1]$ and $[y_0,y_1]$ are not a straight line, and $d_{\s L}(x_0,y_0)=0\;(\mbox{mod } 2)$. Then, $x_0\not=y_0$ and $x_1\not=y_1$. So $f(x_0)$, $f(y_0)$, $f(x_1)$, and $f(y_1)$ are all on different straight lines. So, $g([x_0,x_1])$ and $g([y_0,y_1])$ are not on a straight line.  Thus $g$ is conformal.
\end{proof}

Combining this with Miller's $\s L_0$ dichotomy for directable forests of lines, we get the following: %citation

\begin{cor}
If $G$ is a Borel graph, either $G$ is the Schreier graph of a free Borel action of $\Z^2$  or at least one of the following holds:
\begin{enumerate}
    \item $G$ is not locally a square lattice
    \item There is a Borel conformal map from $\bb S_0$ to $G$
    \item There is a Borel betweenness-preserving embedding of $\s L_0$ into some forest of straight lines in $G$
\end{enumerate}
\end{cor}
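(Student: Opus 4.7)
The plan is to chain together the three dichotomies that have just been established or cited: the characterization of Schreier graphs of free $\Z^2$-actions among graphs locally a square lattice (as those with $\perp_G$ Borel 2-colorable and every forest of straight lines directable), Theorem \ref{axndichotomy} (which handles the 2-colorability of $\perp_G$), and Miller's $\s L_0$ dichotomy for directable forests of lines. The corollary should follow by a clean case split, so the main work is verifying that the three failure modes on the obstruction side line up with the three clauses on the combinatorial side.

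Concretely, I would argue by contrapositive. Suppose none of (1), (2), (3) holds. Then $G$ is locally a square lattice by the negation of (1). Because (2) fails, there is no Borel conformal map from $\bb S_0$ to $G$, and Theorem \ref{axndichotomy} (applied because $G$ is locally a square lattice) then yields that $\perp_G$ admits a Borel 2-coloring. Because (3) fails, no forest of straight lines in $G$ receives a Borel betweenness-preserving embedding from $\s L_0$; Miller's $\s L_0$ dichotomy (applied componentwise to each forest of straight lines, each of which is a Borel locally countable graph whose components are lines) therefore gives that every such forest is directable. The characterizing proposition for locally-square-lattice graphs then concludes that $G$ is the Schreier graph of a free Borel action of $\Z^2$.

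I do not expect any real obstacles. The only mild subtlety is that Miller's $\s L_0$ dichotomy is stated for a single forest of lines, while the Schreier characterization requires \emph{every} forest of straight lines in $G$ to be directable; but that is immediate by applying the dichotomy to each such forest separately, since a failure of directability for any single one of them would already produce an $\s L_0$-embedding into \emph{some} forest of straight lines in $G$, which is exactly clause (3). The three alternatives need not be mutually exclusive, and no effort should be spent on disjointness since the statement does not require it.
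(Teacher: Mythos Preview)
Your proposal is correct and matches the paper's own treatment: the paper simply states the corollary as the result of ``Combining this with Miller's $\s L_0$ dichotomy for directable forests of lines,'' and your contrapositive argument spells out exactly that combination via the characterizing proposition and Theorem~\ref{axndichotomy}. Your remark about applying the $\s L_0$ dichotomy to each forest separately is the right way to handle the quantifier, and no further work is needed.
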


\end{document}